\theoremstyle{plain}
\newtheorem{theorem}{Theorem}[section]
\newtheorem{corollary}{Corollary}[section]
\newtheorem{lemma}[theorem]{Lemma}
\theoremstyle{remark}
\newtheorem{definition}{Definition}
\newtheorem{assumption}{Assumption}
\def\cN{\mathcal{N}}
\def\cS{\mathcal{S}}
\begin{document}
	
	\begin{frontmatter}
		\title{Deep Nonparametric Regression on Approximate 
Manifolds: Non-Asymptotic Error Bounds with Polynomial Prefactors}
		\runtitle{Deep Nonparametric Regression}
		
		\begin{aug}
\author[A]{\fnms{Yuling} \snm{Jiao}\thanks{Yuling Jiao and Guohao Shen contributed equally to this work.}\ead[label=e1,mark]{yulingjiaomath@whu.edu.cn}},
\author[B]{\fnms{Guohao} \snm{Shen}$^*$\ead[label=e2,mark]{guohao.shen@polyu.edu.hk}}
\author[C]{\fnms{Yuanyuan} \snm{Lin}\ead[label=e3,mark]{ylin@sta.cuhk.edu.hk}}
			\and
\author[D]{\fnms{Jian} \snm{Huang}\ead[label=e4,mark]{j.huang@polyu.edu.hk}}
			\address[A]{School of Mathematics and Statistics,
				Wuhan University, Wuhan, Hubei, China\\
\printead{e1}}
			
			\address[B]{Department of Applied Mathematics, The Hong Kong Polytechnic University, Hong Kong SAR, China\\  \printead{e2}}	
				
				\address[C]{Department of Statistics, The Chinese University of Hong Kong,  Hong Kong SAR, China\\ \printead{e3}}	
			
\address[D]{Department of Applied Mathematics, The Hong Kong Polytechnic University, Hong Kong SAR, China\\  \printead{e4}}
		\end{aug}

\begin{abstract}
We study the properties of nonparametric least squares regression using deep neural networks. We derive non-asymptotic upper bounds for the prediction error of the empirical risk minimizer of feedforward deep neural regression. Our error bounds achieve minimax optimal rate and
improve over the existing ones in the sense that they depend polynomially on the dimension of the predictor, instead of exponentially on dimension. We show that the neural regression estimator can circumvent the curse of dimensionality under the assumption that the predictor is supported on an approximate low-dimensional manifold or a set with low Minkowski dimension.  We also establish the optimal convergence rate under the exact manifold support assumption. We investigate how the prediction error of the neural regression estimator depends on the structure of neural networks and propose a notion of network relative efficiency between two types of neural networks, which provides a quantitative measure for evaluating the relative merits of different network structures. To establish these results, we derive a novel approximation error bound for the H\"older smooth functions 
 using ReLU activated neural networks, which may be of independent interest. Our results are derived under weaker assumptions on the data distribution and the neural network structure than those in the existing literature.
\end{abstract}

\begin{keyword}[class=MSC2020]
	\kwd[Primary ]{62G05}
	\kwd{62G08}
	\kwd[; secondary ]{68T07}
\end{keyword}

\begin{keyword}
\kwd{Approximation error}
\kwd{Curse of dimensionality}
\kwd{Deep neural network}
\kwd{Low-dimensional manifolds}
\kwd{Network relative efficiency}
\kwd{Non-asymptotic error bound}
\end{keyword}

\end{frontmatter}

\section{Introduction}
Consider a nonparametric regression model
\begin{equation}\label{model}
		Y=f_0(X)+\eta,
\end{equation}
where $Y \in \mathbb{R}$ is a response, $X \in \mathbb{R}^d$ is a $d$-dimensional vector of predictors, $f_0:[0,1]^d\to \mathbb{R}$ is an unknown regression function, $\eta$ is an error with mean 0 and finite variance $\sigma^2$, independent of $X$.
A basic problem in statistics and machine learning is to estimate the unknown target regression function $f_0$ based on a random sample,  $(X_i, Y_i), i=1, \ldots, n,$ where $n$ is the sample size,
that are independent and identically distributed  (i.i.d.) as
$(X, Y).$

There is a vast literature on nonparametric regression based on
minimizing the empirical least squares loss function,
see, for example,   \citet{nemirovskij1985rate}, \cite{van1990estimating},  \citet{birge1993rates} and the references therein. The consistency of the nonparametric least squares estimators under general conditions was  studied by \citet{geman1982nonparametric}, \citet{nemirovski1983estimators}, \citet{nemirovski1984signal},  \citet{van1987new} and \citet{van1996consistency}, among others.
In the context of pattern recognition, comprehensive results concerning empirical risk minimization can be found in  \cite{devroye2013probabilistic} and  \cite{gyorfi2006distribution}. 
In addition to the consistency, the convergence rate of the empirical risk minimizers
was analyzed in many important works. Examples include \citet{stone1982optimal}, \citet{pollard2012convergence}, \citet{rafaj1987nonparametric}, \citet{cox1988approximation}, \citet{shen1994convergence}, \citet{lee1996efficient}, \citet{birge1998minimum} and  \citet{geer2000empirical}.
These results were generally established under certain smoothness assumption on the unknown target function $f_0$. Typically, it is assumed that $f_0$ is in a H\"older class with a smoothness index $\beta>0$ ($\beta$-H\"older smooth), i.e., all the partial derivatives up to order $\lfloor\beta\rfloor$ exist and the partial derivatives of order $\lfloor\beta\rfloor$ are $\beta-\lfloor\beta\rfloor$ H\"older continuous, where $\lfloor\beta\rfloor$ denotes the largest integer strictly smaller than $\beta$.
For such an $f_0$, the optimal convergence rate of the prediction error is $C_d n^{-2\beta/(2\beta+d)}$
under mild conditions \citep{stone1982optimal}, where $C_d$ is a prefactor independent of $n$ but depending on $d$ and other model parameters. In low-dimensional models with a small $d$, the impact of $C_d$ on the convergence rate is not significant, however, in high-dimensional models with a large $d$, the impact of $C_d$ can be substantial, see, for example, \citet{ghorbani2020discussion}. Therefore, it is crucial to elucidate
how this prefactor depends on the dimensionality so that the error bounds are meaningful
in the high-dimensional settings.
	
Recently, several elegant and stimulating papers have studied the convergence properties of nonparametric regression estimation based on neural network approximation of the regression function $f_0$
\citep{
bauer2019deep, schmidt2019deep, schmidt2020nonparametric,chen2019nonparametric, kohler2019estimation, nakada2020adaptive,  farrell2021deep}.
These works show that deep neural network regression can achieve the optimal-minimax rate
established by \cite{stone1982optimal} under certain conditions.
However,  the convergence rate can be extremely slow when the dimensionality $d$ of the predictor $X$ is high. Therefore, nonparametric regression using deep neural networks cannot escape the well-known
problem of {\it curse of dimensionality} in high-dimensions without any conditions on the underlying model. There has been much effort devoted to deriving better convergence rates under certain assumptions that mitigate the curse of dimensionality. There are two main types of assumptions in the existing literature:  structural assumptions on the target function $f_0$  \citep{schmidt2020nonparametric,bauer2019deep,kohler2019estimation}
and distributional assumptions on the input $X$  \citep{schmidt2019deep,chen2019nonparametric,nakada2020adaptive}.
Under either of these assumptions, the convergence rate $C_dn^{-2\beta/(2\beta+d)}$  could be improved to {\color{black}$C_{d,d^*} n^{-2\beta/(2\beta+d^*)}$}
for some $d^*\ll d$, where {\color{black}$C_{d,d^*}$
is a constant depending on $(d^*,d)$}
and $d^*$ is
the intrinsic dimension of $f_0$ or
the intrinsic dimension of the support of the predictor.
We will provide a detailed comparison between our results and the
existing results in Section \ref{review}.
	

In this paper, we study the properties of  nonparametric least squares regression using deep neural networks. Our main contributions are as follows:

\begin{enumerate}[(i)]
	{\color{black}
\item  We derive a novel approximation error bound for the H\"older smooth functions with
smoothness index $\beta > 0$ using ReLU activated neural networks. Our work
builds on the results of \cite{shen2019deep} and  \cite{lu2020deep}.  \cite{shen2019deep} derived approximation error bound with prefactor depending on $d$  polynomially for H\"older continuous functions (with smoothness index $\beta\in(0,1]$). \cite{lu2020deep} derived approximation error bound explicitly in network depth and width for higher-order smooth functions (with smoothness index $\beta\ge1$ being positive integer) but with prefactor depending on $d$ exponentially. For $\beta > 1$,  the prefactor of our error bound is significantly improved in the sense that it depends on $d$ polynomially instead of exponentially. This approximation result is of independent interest and may be useful in other problems.
}

\item
We alleviate the curse of dimensionality by assuming that $X$ is supported on an approximate
low-dimensional manifold.
Under such an approximate low-dimensional manifold support assumption, we show that the rate of convergence $O(n^{-2\beta/(2\beta+d)})$ can be improved to  $O(n^{-2\beta/(2\beta+d_\mathcal{M}\log(d))})$, 
where $d_\mathcal{M}$ is the intrinsic dimension of the low-dimensional manifold and $\beta>0$ is the order of the H\"older-smoothness of  $f_0$.
Moreover, under the exact manifold support assumption, we established a 
result that achieves the optimal rate $O(n^{-2\beta/(2\beta+d_\mathcal{M})})$ (up to a logarithmic factor)
with a prefactor only depending linearly on $d$.
{\color{black} We also consider a low Minkowski dimension assumption as in \cite{nakada2020adaptive} and derive an error bound that alleviates the curse of dimensionality with different network architectures and using a different proof technique. }

\item We  derive explicitly how the error bounds are determined by the neural network parameters,
including the width,  the depth and the size of the network. We propose a notion of network relative efficiency between two types of neural networks, defined as the ratio of the logarithms of the network sizes needed to achieve the optimal convergence rate.
This provides a quantitative measure for evaluating the relative merits of network structures.
We quantitatively demonstrate that deep networks have advantages over shallow networks in the sense that they achieve the same error bound with a smaller network size.
			
	\end{enumerate}

The remainder of the paper is organized  as follows.
In Section \ref{sec2} we describe the setup of the problem and the class of ReLU activated feedforward neural networks used in estimating the regression function.
 In Section \ref{sec2new} we present a basic inequality for the excess risk in terms of the stochastic and approximation errors and describe our approach to the analysis of these errors.
 We also establish a novel approximation error bound for the H\"older smooth functions with smoothness index $\beta > 0$  using ReLU activated neural networks,
 In Section \ref{sec3} we provide sufficient conditions under which the neural regression estimator possesses the basic consistency property,
establish non-asymptotic error bounds for the neural regression estimator using deep feedforward neural networks. In Section \ref{efficiency} we  present the results on how the error bounds depend on the network structures and propose a notion of network relative efficiency between two types of neural networks, defined as the ratio of the logarithms of the network sizes needed to achieve the optimal convergence rate. This can be used as a quantitative measure for evaluating the relative merits of different network structures.
 In Section \ref{sec4} we show that the neural regression estimator can circumvent the curse of dimensionality if the data distribution is supported on an (approximate) low-dimensional manifold
 or a set with a low Minkowski dimension.
Detailed comparison between our results and the related works
 are presented in section \ref{review}.
Concluding remarks are given in section \ref{sec5}.
	
\section{Preliminaries}
	\label{sec2}
In this section, we present the basic setup of the nonparametric regression problem and define
the excess risk and the prediction error for which we wish to establish the non-asymptotic error bounds. We also describe the structure of feedforward neural networks to be used in the estimation of the regression function.

\subsection{Least squares estimation}
A basic paradigm for estimating $f_0$ is to minimize the mean squared error or the $L_2$ risk.
For any (random) function $f$, let $Z\equiv (X,Y)$ be a random vector 
independent of  $f$. The $L_2$ risk is defined by $L(f)=\mathbb{E}_{Z}\vert Y-f(X)\vert^2$. At the population level, the least-squares estimation is to find a measurable function
$f^*: \mathbb{R}^d\to \mathbb{R}$ satisfying
	$$f^* :=\arg\min_{f} L(f) =\arg\min_{f}\mathbb{E}_{Z}\vert Y-f(X)\vert^2.$$
Under the assumption that $\mathbb{E}(\eta|X)=0$,  the underlying regression function $f_0$  is the optimal solution $f^*$ on $\mathcal{X}$.	
However, in  applications, the distribution of $(X,Y)$ is typically unknown and only a random sample $S \equiv \{(X_i,Y_i)\}_{i=1}^n$ is available.
Let
\begin{equation}
\label{er1}
L_n(f)=\sum_{i=1}^{n}\vert Y_i-f(X_i)\vert^2/n
\end{equation}
be  the empirical risk of $f$ on the sample $S$. 	
Based on the observed random sample, our primary goal is to construct an estimators of $f_0$ within a certain class of functions $\mathcal{F}_n$ by minimizing the empirical risk.
Such an estimator is called the empirical risk minimizer (ERM), defined by
	\begin{equation}\label{erm}
		\hat{f}_n\in\arg\min_{f\in\mathcal{F}_n}L_n(f).
	\end{equation}
Throughout the paper, we choose $\mathcal{F}_n$  to be a function class consisting of
feedforward neural networks.
For any estimator $\hat{f}_n$,  we evaluate its quality via its  \textit{excess risk},
defined as the difference between the $L_2$
 risks of $\hat{f}_n$ and $f_0$,
\begin{align*}
L(\hat{f}_n)-L(f_0) = \mathbb{E}_{Z}\vert Y- \hat{f}_n(X)\vert^2-\mathbb{E}_{Z}\vert Y- f_0(X)\vert^2. \nonumber
\end{align*}
Because of the simple form of the least squares loss, the excess risk can be simply expressed as
\[
\Vert\hat{f}_n-f_0\Vert^2_{L^2(\nu)}=\mathbb{E}_{X}\vert \hat{f}_n(X)-f_0(X)\vert^2,
\]
where $\nu$ denotes the marginal distribution of  $X$.
A good estimator  $\hat{f}_n$ should have a small excess risk
$\Vert\hat{f}_n-f_0\Vert^2_{L^2(\nu)}.$
Thereafter,  we focus on deriving the non-asymptotic upper bounds of the excess risk $\Vert\hat{f}_n-f_0\Vert^2_{L^2(\nu)}$ and the prediction error $\mathbb{E}_S\Vert\hat{f}_n-f_0\Vert^2_{L^2(\nu)}$.

\subsection{ReLU feedforward neural networks}
In recent years,  deep neural network modeling has achieved impressive successes in many applications.
Also, neural network functions have proven to be
an effective approach for approximating high-dimensional functions.
We consider regression function estimators based on the feedforward neural networks with rectified linear unit (ReLU) activation function.
Specifically, we set the function class $\mathcal{F}_n$ 
 to be $\mathcal{F}_{\mathcal{D},\mathcal{W}, \mathcal{U},\mathcal{S},\mathcal{B}}$, a class of  feedforward neural networks $f_\phi: \mathbb{R}^d \to \mathbb{R} $ with parameter $\phi$, depth $\mathcal{D}$, width $\mathcal{W}$, size $\mathcal{S}$, number of  neurons $\mathcal{U}$ and $f_{\phi}$ satisfying $\Vert f_\phi\Vert_\infty\leq\mathcal{B}$ for some $0 <\mathcal{B} < \infty$, where
	$\Vert f \Vert_\infty$ is the sup-norm of a function $f$.
	Note that the network parameters may depend on the sample size $n$, but  the dependence is omitted in the notation for simplicity.
A brief description of the feedforward neural networks are given below.
	
We begin with the multi-layer perceptron (MLP), an important and widely used subclass of feedforward neural networks in practice. The architecture of a MLP can be expressed as a composition of a series of functions
\[
f_\phi(x)=\mathcal{L}_\mathcal{D}\circ\sigma\circ\mathcal{L}_{\mathcal{D}-1}
\circ\sigma\circ\cdots\circ\sigma\circ\mathcal{L}_{1}\circ\sigma\circ\mathcal{L}_0(x),\  x\in \mathbb{R}^{p_0},
\]
where $p_0=d$ and $\sigma(x)=\max(0, x)$ is the rectified linear unit (ReLU) activation function (defined for each component of $x$ if $x$ is a vector) and
	$\mathcal{L}_{i}(x)=W_ix+b_i, i=0,1,\ldots,\mathcal{D},$
where $W_i\in\mathbb{R}^{p_{i+1}\times p_i}$ is a weight matrix,  $p_i$ is the width (the number of neurons or computational units) of the $i$-th layer, and $b_i\in\mathbb{R}^{p_{i+1}}$ is the bias vector in the $i$-th linear transformation $\mathcal{L}_i$.
The input data consisting of predictor values $X$ is the first layer and the output
is the last layer. Such a network $f_\phi$ has $\mathcal{D}$ hidden layers and $(\mathcal{D}+2)$ layers in total. We use a $(\mathcal{D}+2)$-vector $(p_0,p_1,\ldots,p_\mathcal{D},p_{\mathcal{D}+1})^\top$ to describe the width of each layer; particularly,   $p_0=d$ is the dimension of the input $X$ and $p_{\mathcal{D}+1}=1$ is the dimension of the response $Y$ in model (\ref{model}). The width $\mathcal{W}$ is defined as the maximum width of hidden layers, i.e.,
$\mathcal{W}=\max\{p_1,...,p_\mathcal{D}\}$; 	the size $\mathcal{S}$ is defined as the total number of parameters in the network $f_\phi$, i.e., $\mathcal{S}=\sum_{i=0}^\mathcal{D}\{p_{i+1}\times(p_i+1)\}$; 	
the number of neurons $\mathcal{U}$ is defined as the number of computational units in hidden layers, i.e., $\mathcal{U}=\sum_{i=1}^\mathcal{D} p_i$.  Note that the neurons in consecutive layers of a MLP are connected to each other via linear transformation matrices $W_i$,  $i=0,1,\ldots,\mathcal{D}$.   In other words, an MLP is fully connected between consecutive layers and has no other connections. For an MLP  class $\mathcal{F}_{\mathcal{D},\mathcal{U},\mathcal{W},\mathcal{S},\mathcal{B}}$,
its parameters satisfy the simple relationship
 \[
\max\{\mathcal{W},\mathcal{D}\}\leq\mathcal{S}\leq\mathcal{W}(d+1)
+(\mathcal{W}^2+\mathcal{W})(\mathcal{D}-1)+\mathcal{W}+1
=O(\mathcal{W}^2\mathcal{D}).
\]
The network parameters can depend on the sample size $n$, that is,
$\mathcal{S}=\mathcal{S}_n$, $\mathcal{D}=\mathcal{D}_n$,
$\mathcal{W}=\mathcal{W}_n$, and $\mathcal{B}=\mathcal{B}_n$.
This makes it possible to approximate the target regression function by neural networks as $n$ increases.
For notational simplicity, we omit the subscript below.
The approximation and excess error rates will be determined in part
by how these network parameters depend on $n$.

Different from multilayer perceptrons, a general feedforward neural network may not be fully connected. 	For such a network, 
each neuron in layer $i$ may be connected to only a small subset of neurons in layer $i+1$.
The total number of parameters $\mathcal{S}$ is reduced and the computational cost required to evaluate the network will also be reduced.
	
Though our discussion focuses on multi-layer perceptrons due to their simplicity, our theoretical results are valid for general feedforward neural networks. Moreover, our results for ReLU networks can be extended to networks with piecewise-linear activation functions without further difficulty,
based on the  approximation results  \citep{yarotsky2017error} and the VC-dimension
bounds \citep{bartlett2019nearly} for piecewise linear neural networks.

\section{Basic error analysis}
\label{sec2new}
In this section, we present a basic inequality for the excess risk in terms of the stochastic and approximation errors and describe our approach to the analysis of these errors.
	\subsection{A basic inequality}
	\label{sec2.1}
	To begin with, 	we  give a basic upper bound on the excess risk of the empirical risk minimizer. For a general loss function $L$ and any estimator $\hat{f}_n$ belonging to
a function class $\mathcal{F}_n$, its excess risk can be decomposed as \citep{mohri2018foundations}:
	$$L(\hat{f}_n)-L(f_0)=\left\{L(\hat{f}_n)-\inf_{f\in\mathcal{F}_n} L(f)\right\}+ \left\{\inf_{f\in\mathcal{F}_n}L(f)-L(f_0) \right\}.$$
	The first term of the right hand side is the {\it stochastic error}, and the second term is  the {\it approximation error}.
	The stochastic error depends on the estimator $\hat{f}_n$, which measures the difference of the error of $\hat{f}_n$ and the best one in  $\mathcal{F}_n$. The approximation error depends on the function class $\mathcal{F}_n$ and the target $f_0$, which measures how well the function $f_0$ can be approximated using $\mathcal{F}_n$ with respect to the loss $L$.
	
For least squares estimation, the loss function $L$ is the $L_2$ loss and $\hat f_n$ is the ERM  
defined in (\ref{erm}).
We firstly establish an upper bound on the excess risk of $\hat f_n$ with least squares loss.
	

	\begin{lemma}\label{lemma0}
For any random sample $S=\{(X_i, Y_i)\}_{i=1}^n$,
the excess risk of ERM satisfies
		\begin{align*}		\mathbb{E}_{S} [\|\hat{f}_n-f_0\|_{L^2(\nu)}^2]&=
\mathbb{E}_{S} [L(\hat{f}_n)-L(f_0)]\\
			&\leq \mathbb{E}_{S}[ L(f_0)-2L_{n}(\hat{f}_n)+L(\hat{f}_n)]+2\inf_{f\in\mathcal{F}_n} \|f-f_0\|_{L^2(\nu)}^2.
		\end{align*}
	\end{lemma}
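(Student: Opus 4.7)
The plan is to establish this inequality via the standard bias-variance style decomposition exploited for ERM, using the defining property that $\hat f_n$ minimizes $L_n$ over $\mathcal{F}_n$. The key identity we will lean on is that for the squared loss, the excess risk of \emph{any} $f$ coincides with the $L^2(\nu)$ distance to $f_0$, i.e.\ $L(f)-L(f_0)=\|f-f_0\|^2_{L^2(\nu)}$. This follows from the orthogonality $\mathbb{E}[\eta\,|\,X]=0$ by expanding $|Y-f(X)|^2 = |Y-f_0(X)|^2 + 2(Y-f_0(X))(f_0(X)-f(X)) + |f_0(X)-f(X)|^2$ and taking expectation. Thus $\inf_{f\in\mathcal{F}_n}\{L(f)-L(f_0)\} = \inf_{f\in\mathcal{F}_n}\|f-f_0\|^2_{L^2(\nu)}$.

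The main step is a simple algebraic decomposition. Pick (or, if the infimum is not attained, approximate within $\varepsilon$ and then let $\varepsilon\downarrow 0$) an element $f_n^{\star}\in\arg\min_{f\in\mathcal{F}_n} L(f)$, which is deterministic. Write
\begin{equation*}
L(\hat f_n) - L(f_0) \;=\; \bigl[L(\hat f_n) - 2L_n(\hat f_n)\bigr] \;+\; 2L_n(\hat f_n) \;-\; L(f_0).
\end{equation*}
The ERM property $L_n(\hat f_n)\le L_n(f_n^{\star})$ gives $2L_n(\hat f_n)\le 2L_n(f_n^{\star})$, yielding the pointwise (in $S$) bound
\begin{equation*}
L(\hat f_n) - L(f_0) \;\le\; \bigl[L(\hat f_n) - 2L_n(\hat f_n)\bigr] + 2L_n(f_n^{\star}) - L(f_0).
\end{equation*}

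Now I take expectation over $S$. Because $f_n^{\star}$ does not depend on $S$, we have $\mathbb{E}_S[L_n(f_n^{\star})] = L(f_n^{\star})$ by linearity of expectation across the i.i.d.\ sample. Hence
\begin{equation*}
\mathbb{E}_S[L(\hat f_n) - L(f_0)] \;\le\; \mathbb{E}_S\bigl[L(\hat f_n)-2L_n(\hat f_n)\bigr] + 2L(f_n^{\star}) - L(f_0).
\end{equation*}
Rewriting $2L(f_n^{\star}) - L(f_0) = L(f_0) + 2\bigl[L(f_n^{\star})-L(f_0)\bigr] = L(f_0) + 2\inf_{f\in\mathcal{F}_n}\|f-f_0\|^2_{L^2(\nu)}$ and absorbing $L(f_0) = \mathbb{E}_S L(f_0)$ into the first expectation produces the stated inequality.

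There are essentially no obstacles: the only mild subtleties are (i) ensuring that a minimizer (or near-minimizer) $f_n^{\star}$ is measurable so that $\mathbb{E}_S[L_n(f_n^{\star})] = L(f_n^{\star})$ is legitimate, which is handled by taking an $\varepsilon$-minimizer and passing $\varepsilon \to 0$, and (ii) confirming that the orthogonality identity $L(f)-L(f_0)=\|f-f_0\|^2_{L^2(\nu)}$ applies under the model assumption $\mathbb{E}[\eta|X]=0$. Neither is a real difficulty, so the proof is essentially the three-line decomposition above combined with the ERM inequality and the identification of the approximation term.
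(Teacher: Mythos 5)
Your proof is correct and follows essentially the same route as the paper's: both use the pointwise ERM inequality $L_n(\hat f_n)\le L_n(f^{\star})$ for the best population approximator $f^{\star}$ (the paper calls it $\bar f$, the minimizer of $\|f-f_0\|^2_{L^2(\nu)}$, which coincides with your $\arg\min_f L(f)$ by the orthogonality identity), take expectation over $S$ so that $\mathbb{E}_S[L_n(f^{\star})]=L(f^{\star})$, and rearrange using $L(f)-L(f_0)=\|f-f_0\|^2_{L^2(\nu)}$. The only additions in your write-up — spelling out the decomposition $L(\hat f_n)-L(f_0)=[L(\hat f_n)-2L_n(\hat f_n)]+2L_n(\hat f_n)-L(f_0)$ and flagging the $\varepsilon$-minimizer measurability detail — are expository polish rather than a different argument.
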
	
By Lemma \ref{lemma0},
the excess risk of ERM  is bounded above by the sum of two terms: the stochastic error bound $\mathbb{E}_{S}[ L(f_0)-2L_{n}(\hat{f}_n)+L(\hat{f}_n)]$ and the approximation error $\inf_{f\in\mathcal{F}_n} \Vert f-f_0\Vert^2_{L^2(\nu)}$.
The first term $\mathbb{E}_{S}[ L(f_0)-2L_{n}(\hat{f}_n)+L(\hat{f}_n)]$ can be bounded by the complexity of $\mathcal{F}_n$ using the empirical process theory \citep{vw1996, anthony1999, bartlett2019nearly}. The second term $\inf_{f\in\mathcal{F}_n} \Vert f-f_0\Vert^2_{L^2(\nu)}$ measures the approximation error of the function class $\mathcal{F}_n$ to $f_0$.
The approximation of high-dimensional functions using neural networks has been studied by many authors, some recent works include \citet{yarotsky2017error, yarotsky2018optimal,shen2019nonlinear, shen2019deep,lu2020deep,shen2021optimal}, among others.
	
\subsection{Stochastic error}
	\label{sec2.2}
In this subsection, we focus on the stochastic error of ERM implemented using the feedforward neural networks and establish an upper bound on the prediction error,  or the expected excess risk.  For the  least-squares estimator of neural networks nonparametric regression, oracle inequalities for a bounded response variable were studied by \cite{gyorfi2006distribution} and \cite{farrell2021deep}. Without the boundedness assumption on $Y$, \cite{schmidt2020nonparametric,bauer2019deep} derived the oracle inequality for a sub-Gaussian $Y$. We consider 
a sub-exponentially distributed $Y$.

\begin{assumption}
		\label{A1}
The response variable $Y$ is sub-exponentially distributed, i.e., there exists a constant $\sigma_Y>0$ such that $\mathbb{E}\exp(\sigma_Y\vert Y\vert)<\infty$.
	\end{assumption}

For a class $\mathcal{F}$ of functions: $\mathcal{X}\to \mathbb{R}$,
 its pseudo dimension, denoted by $\text{Pdim}(\mathcal{F}),$  is the largest integer $m$ for which there exists $(x_1,\ldots,x_m,y_1,\ldots,y_m)\in\mathcal{X}^m\times\mathbb{R}^m$ such that for any $(b_1,\ldots,b_m)\in\{0,1\}^m$ there exists $f\in\mathcal{F}$ such that $\forall i:f(x_i)>y_i\iff b_i=1$ \citep{anthony1999, bartlett2019nearly}.
 For a class of real-valued functions generated by neural networks, pseudo dimension is a natural measure of its complexity. In particular, if $\mathcal{F}$ is the class of functions generated by a neural network with a fixed architecture and fixed activation functions, we have $\text{Pdim}(\mathcal{F})=\text{VCdim}(\mathcal{F})$ (Theorem 14.1 in \cite{anthony1999}) where $\text{VCdim}(\mathcal{F})$ is the VC dimension of $\mathcal{F}$.
 In our results, we require the sample size $n$ to be greater than the pseudo dimension of the class of neural networks considered.
	

For a given sequence $x=(x_1,\ldots,x_n)\in\mathcal{X}^n,$
let  $\mathcal{F}_n|_x=\{(f(x_1),\ldots,f(x_n):f\in\mathcal{F}_n\}$ be the  subset of $\mathbb{R}^{n}$. For a positive number $\delta$, let $\mathcal{N}(\delta,\Vert\cdot\Vert_\infty,\mathcal{F}_n|_x)$ be the covering number of $\mathcal{F}_n|_x$ under the norm $\Vert\cdot\Vert_\infty$ with radius $\delta$.
Define the uniform covering number
$\mathcal{N}_n(\delta,\Vert\cdot\Vert_\infty,\mathcal{F}_n)$ to be the maximum
	over all $x\in\mathcal{X}$ of the covering number $\mathcal{N}(\delta,\Vert\cdot\Vert_\infty,\mathcal{F}_n|_x)$, i.e.,
\begin{equation}
\label{ucover}
\mathcal{N}_n(\delta,\Vert\cdot\Vert_\infty,\mathcal{F}_n)=
\max\{\mathcal{N}(\delta,\Vert\cdot\Vert_\infty,\mathcal{F}_n|_x):x\in\mathcal{X}\}.
\end{equation}

\begin{lemma}\label{lemma1}
		Consider the $d$-variate nonparametric regression model in (\ref{model}) with an unknown regression function $f_0$. Let $\mathcal{F}_n=\mathcal{F}_{\mathcal{D},\mathcal{W},\mathcal{U},\mathcal{S},\mathcal{B}}$ be the class of feedforward neural networks with a continuous piecewise-linear activation function with finitely many inflection points
and $\hat{f}_n \in\arg\min_{f\in\mathcal{F}_n}L_n(f) $  be the empirical risk minimizer over $\mathcal{F}_n$. Assume that Assumption \ref{A1} holds and $ \Vert f_0\Vert_\infty\leq \mathcal{B}$ for $\mathcal{B}\geq 1$.
Then,  for $n \ge \text{Pdim}(\mathcal{F}_n)/2$,
\begin{equation} \label{bound5a}
\mathbb{E}_{S}[ L(f_0)-2L_{n}(\hat{f}_n)+L(\hat{f}_n)]  \leq c_0\mathcal{B}^4 (\log n)^4\,
\frac{1}{n} \log\mathcal{N}_{2n}(n^{-1},\Vert \cdot\Vert_\infty,\mathcal{F}_n),
\end{equation}
where $c_0>0$ is a constant independent of $d$, $n$, $\mathcal{B}$, $\mathcal{D}$, $\mathcal{W}$ and $\mathcal{S}$, and
\begin{equation} \label{oracle}
		\mathbb{E}\Vert \hat{f}_n-f_0\Vert^2_{L^2(\nu)}\leq C_0\mathcal{B}^5(\log n)^5\, \frac{1}{n}\mathcal{S}\mathcal{D}\log(\mathcal{S})+ 2\inf_{f\in\mathcal{F}_n}\Vert f-f_0\Vert^2_{L^2(\nu)},
		\end{equation}
where $C_0>0$ is a constant independent of $d$, $n$, $\mathcal{B}$, $\mathcal{D}$,  $\mathcal{W}$ and $\mathcal{S}$.
	\end{lemma}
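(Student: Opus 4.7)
The plan is to first reduce to an effectively bounded-response setting by truncating $Y$ at level $\beta_n \asymp \log n$, then apply a standard empirical process argument (symmetrization plus covering-number control) on the truncated problem to obtain (\ref{bound5a}), and finally combine with Lemma \ref{lemma0} together with the pseudo-dimension and covering-number bounds of \cite{anthony1999} and \cite{bartlett2019nearly} to derive (\ref{oracle}). Concretely, set $T_\beta y = \mathrm{sign}(y)\min(|y|,\beta)$ and $\beta_n = (c/\sigma_Y)\log n$ for a constant $c$ large enough that Assumption \ref{A1} together with Markov's inequality yield $\mathbb{P}(|Y|>\beta_n) = O(n^{-c})$ and, more generally, $\mathbb{E}[Y^{2}\mathbf{1}\{|Y|>\beta_n\}]=O(n^{-c'})$ for any $c'<c$. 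Writing $\tilde Y = T_{\beta_n}Y$, the untruncated squared loss agrees with its truncated counterpart on $\{|Y|\le\beta_n\}$, and the tail contribution to $\mathbb{E}_S[L(f_0)-2L_n(\hat f_n)+L(\hat f_n)]$ is $O(n^{-c'})$, absorbable for $c$ chosen large. The truncated loss increments $g_f(x,y) = (T_{\beta_n}y-f(x))^{2}-(T_{\beta_n}y-f_0(x))^{2}$ then satisfy $\|g_f\|_\infty \le C\beta_n\mathcal{B}$ and $\mathrm{Var}(g_f)\le C\mathcal{B}^{2}\|f-f_0\|_{L^{2}(\nu)}^{2}$, using $\|f_0\|_\infty\le\mathcal B$ and $\|f\|_\infty\le\mathcal B$.

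For this effectively bounded problem, a standard symmetrization and peeling/chaining argument (in the style of Theorem 11.4 of \cite{gyorfi2006distribution}) applied to the class $\{g_f:f\in\mathcal F_n\}$ discretizes $\mathcal F_n$ on the sample with an $n^{-1}$-net in $\|\cdot\|_\infty$ and applies a Bernstein-type inequality inside each ball. One logarithmic factor arises from the magnitude bound $\|g_f\|_\infty\lesssim\beta_n\mathcal B\asymp \mathcal{B}\log n$ appearing in the Bernstein step, and a second from relating the empirical $L^{2}$ norm to $\|\cdot\|_{L^{2}(\nu)}$; the uniform-entropy term $\log\mathcal N_{2n}(n^{-1},\|\cdot\|_\infty,\mathcal F_n)$ contributes as written, producing (\ref{bound5a}). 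To obtain (\ref{oracle}), combine Lemma \ref{lemma0} with (\ref{bound5a}) and bound the covering number via the pseudo-dimension-to-covering estimate (Theorem 12.2 of \cite{anthony1999})
\[
\log\mathcal N_{2n}(\delta,\|\cdot\|_\infty,\mathcal F_n) \;\le\; c\,\mathrm{Pdim}(\mathcal F_n)\,\log\!\bigl(en\mathcal B/\delta\bigr),
\]
at $\delta = n^{-1}$, together with the piecewise-linear pseudo-dimension bound $\mathrm{Pdim}(\mathcal F_n)=O(\mathcal{SD}\log\mathcal{S})$ of \cite{bartlett2019nearly}. Absorbing the resulting $\log(n\mathcal B)$ factor into an additional $\log n$ and accounting for an extra $\mathcal B$ coming from the magnitude of the quadratic loss yield the $\mathcal B^{2}(\log n)^{3}\mathcal{SD}\log\mathcal S/n$ prefactor in (\ref{oracle}); the approximation term $2\inf_{f\in\mathcal F_n}\|f-f_0\|_{L^{2}(\nu)}^{2}$ is inherited directly from Lemma \ref{lemma0}. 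The hypothesis $n\ge\mathrm{Pdim}(\mathcal F_n)/2$ is used exactly in making the covering-number estimate effective at scale $\delta=n^{-1}$.

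The main obstacle is the truncation step, because the quadratic loss squares the response, so one must verify that $\mathbb{E}[Y^{2}\mathbf{1}\{|Y|>\beta_n\}]$ decays strictly faster than the target rate $n^{-1}(\log n)^{3}$, while simultaneously keeping $\beta_n$ small enough that the $\beta_n$ factor entering the Bernstein step inflates only the logarithmic prefactor to $(\log n)^{2}$ rather than any polynomial power. Once the truncation is calibrated correctly, the remainder of the argument is careful bookkeeping, since the required concentration, VC, and covering-number tools are already available in \cite{gyorfi2006distribution, anthony1999, bartlett2019nearly}.
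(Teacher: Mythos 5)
Your proposal follows essentially the same route as the paper's proof: truncate $Y$ at $\beta_n \asymp \log n$ via the sub-exponential tail, control $\mathbb{E}_S[L(f_0)-2L_n(\hat f_n)+L(\hat f_n)]$ with a one-sided Bernstein argument over a $\|\cdot\|_\infty$ net of $\mathcal{F}_n$ at scale $n^{-1}$, and then convert the covering number to a pseudo-dimension bound via Theorem 12.2 of Anthony--Bartlett and the pseudo-dimension estimates of Bartlett et al. One slip worth fixing: the variance of the truncated increment is $\mathrm{Var}(g_f)\lesssim \beta_n^{2}\,\|f-f_{\beta_n}\|^{2}_{L^{2}(\nu)}$ rather than $\mathcal{B}^{2}\,\|f-f_0\|^{2}_{L^{2}(\nu)}$, since $|f(X)+f_{\beta_n}(X)-2T_{\beta_n}Y|\lesssim \beta_n$ and the reference point in the truncated problem is the truncated regression function $f_{\beta_n}$, not $f_0$; as a result both $\log n$ factors in (\ref{bound5a}) come from this $\beta_n^{2}$ in the Bernstein step, not (as you suggest) one from the range bound and one from an empirical-to-population $L^{2}$ comparison.
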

The stochastic error is bounded by a term determined by the metric entropy of $\mathcal{F}_n$
in (\ref{bound5a}),
which is measured by the covering number of $\mathcal{F}_n$.
	To obtain (\ref{oracle}), we further bound the covering number of $\mathcal{F}_n$ by its pseudo dimension (VC dimension).
	Based on \cite{bartlett2019nearly}, the pseudo dimension (VC dimension) of $\mathcal{F}_n$ with piecewise-linear activation function can be further contained and represented by its parameters $\mathcal{D}$ and $\mathcal{S}$, i.e.,  ${\rm Pdim}(\mathcal{F}_n)=O(\mathcal{S}\mathcal{D}\log(\mathcal{S}))$.
This leads to the upper bound for the prediction error by the sum of the stochastic error and  the approximation error of $\mathcal{F}_n$ to $f_0$ in (\ref{oracle}).

Results similar to Lemma \ref{lemma1} with slightly different constants have been obtained for a bounded $Y$ in \citet{gyorfi2006distribution} and a sub-Gaussian $Y$ in \citet{bauer2019deep}
and \citet{schmidt2020nonparametric}.

\subsection{Approximation error}
\label{sec2.3}
The 
approximation error depends on $\mathcal{F}_n=\mathcal{F}_{\mathcal{D},\mathcal{W},\mathcal{U},\mathcal{S},\mathcal{B}}
$ through its parameters and is related to the smoothness of $f_0$. The existing works on
approximation posit different smoothness assumptions on $f_0$. For example,  \citet{bauer2019deep} assume that $f_0$ is $\beta$-H\"older smooth with $\beta \ge 1$, i.e.,  all partial derivatives of $f_0$ up to order $\lfloor\beta\rfloor$ exist and the partial derivatives of order $\lfloor\beta\rfloor$ are $\beta-\lfloor\beta\rfloor$ H\"older continuous.
\citet{farrell2021deep} requires that $f_0$ lies in a Sobolev ball with smoothness $\beta\in \mathbb{N}^+$, i.e., $f_0(x)\in\mathcal{W}^{\beta,\infty}([-1,1]^d)$.
Approximation theories on Korobov spaces (\cite{mohri2018foundations}), Besov spaces \citep{suzuki2018adaptivity} or function space with $f_0\in C^\beta[0,1]^d$ with integer $\beta\ge1$
can be found in \cite{liang2016deep}, \cite{lu2017expressive}, \cite{yarotsky2017error} and \cite{lu2020deep}.

Here, we assume that $f_0$ is a $\beta$-H\"older smooth function as stated in Assumption \ref{A2} below. We aim to develop an approximation theory by utilizing the smoothness of $f_0$ and obtain an explicit approximation error bound in terms of the network depth and width with an improved prefactor compared to previous results.

Let $\beta=s+r >0$,  $r\in(0,1]$ and $s=\lfloor\beta\rfloor\in\mathbb{N}_0$, where $\lfloor\beta\rfloor$ denotes the largest integer strictly smaller than $\beta$ and $\mathbb{N}_0$ denotes the set of non-negative integers. For a finite constant $B_0 >0$, the H\"older class of functions $\mathcal{H}^\beta([0,1]^d,B_0)$  is defined as

\begin{align}
\label{Hclass}
&\mathcal{H}^\beta([0, 1]^d,B_0)  \\
&\ =
\Big\{f: [0,1]^d\to\mathbb{R},\max_{\Vert\alpha\Vert_1\le s}\Vert\partial^\alpha f\Vert_\infty\le B_0,
\max_{\Vert\alpha\Vert_1=s} \sup_{x\not=y}\frac{\vert\partial^\alpha f(x)-\partial^\alpha f(y)\vert}{\Vert x-y\Vert_2^r}\le B_0 \Big\}, \nonumber
\end{align}
where $\partial^\alpha=\partial^{\alpha_1}\cdots\partial^{\alpha_d}$ with $\alpha=(\alpha_1,\ldots,\alpha_d)^\top\in\mathbb{N}_0^d$ and $\Vert\alpha\Vert_1=\sum_{i=1}^d\alpha_i$.

\begin{assumption}[H\"older smoothness]\label{A2}
The target function $f_0$ belongs to the H\"older class $\mathcal{H}^\beta([0,1]^d,B_0)$ defined in (\ref{Hclass}) for a given $\beta > 0$ and a finite constant $B_0 > 0$ .
\end{assumption}

Under Assumption \ref{A2}, all partial derivatives of $f_0$ up to the
$ \lfloor \beta\rfloor$-th order exist.
When $\beta\in(0,1)$, $f_0$ is a H\"older continuous function with order $\beta$ and H\"older constant $B_0$;
when $\beta=1$, $f_0$ is a Lipschitz function with Lipschitz constant $B_0$;
when $\beta > 1$, $f_0$ belongs to the $C^s$ class (class of functions whose $s$-th partial derivatives exist and are bounded) with $s=\lfloor \beta\rfloor.$

In this work, the function class $\mathcal{F}_n$ consists of the feedforward neural networks with the ReLU activation function.
An important result on deep neural network approximation proved by \cite{yarotsky2017error} is the following:
for any $\varepsilon\in(0,1)$, any $d,\beta$,  and any $f_0$ in the Sobolev ball $\mathcal{W}^{\beta,\infty}([0,1]^d)$ with $\beta > 0$,
there exists a ReLU network $\hat{f}$ with depth $\mathcal{D}$ at most $c\{\log(1/\varepsilon)+1\}$, size $\mathcal{S}$ and number of neurons $\mathcal{U}$ at most $c\varepsilon^{-d/\beta}\{\log(1/\varepsilon)+1\}$ such that
$\Vert \hat{f}-f_0\Vert_\infty\equiv \max_{x\in[0,1]^d} \vert \hat{f}(x)-f_0(x)\vert\leq \varepsilon$, where $c$ is some constant depending on $d$ and $\beta$.
In particular, it is required that the constant $c=O(2^d)$, an exponential rate of $d$,  due to the technicality in the proof.  	
The main idea of \citet{yarotsky2017error} is to show that, small neural networks can approximate polynomials well locally, and stacked neural networks (by $2^d$ small sub-networks) can further approximate smooth function by approximating its Taylor expansions.
	 \citet{yarotsky2018optimal} derived the optimal rate of approximation for continuous functions   by deep ReLU networks in terms of the network size $\mathcal{S}$ and the modulus of continuity of $f_0$. It was shown that $\inf_{f\in\mathcal{F}_n}\Vert f-f_0\Vert_\infty\leq c_1\omega_{f_0}(c_2\mathcal{S}^{-p/d})$ for some $p\in[1,2]$ and some constants $c_1,c_2$ possibly depending on $d,p$ but not $\mathcal{S}, f_0$.
The upper bound holds for any $p\in(1,2]$ if the network $\mathcal{F}_n=\mathcal{F}_{\mathcal{D},\mathcal{W},\mathcal{U},\mathcal{S},\mathcal{B}}$ satisfies $\mathcal{D}\geq c_3\mathcal{S}^{p-1}/\log(\mathcal{S})$ for some constant $c_3$ possibly depending on $p$ and $d$.
\citet{shen2021optimal} established the optimal rate of approximation for H\"older continuous functions  by deep ReLU networks in terms both width and depth. They showed by construction that deep ReLU networks with width $\mathcal{W}= O((\max \{d\lfloor N^{1 / d}\rfloor, N+2 \}))$ and depth $\mathcal{D} = O(L)$ can approximate a H\"older continuous function on $[0,1]^{d}$ with an approximation rate $O(B_0 \sqrt{d}(N^{2} L^{2} \log N)^{-\beta / d})$, where $\beta \in(0,1]$ and $B_0>0$ are  H\"older order and constant, respectively.

Several recent studies have considered approximation properties of deep neural networks
\citep{chen2019efficient, nakada2020adaptive, schmidt2019deep,schmidt2020nonparametric}.
These studies used a construction similar to that of \cite{yarotsky2017error}.
A common feature of these results is that, the prefactor of the approximation error is of the order $O(a^d)$ for some $a\geq2$ and the size $\mathcal{S}$ or the width $\mathcal{W}$ of the network grows at least exponentially in $d.$
Unfortunately, a prefactor of the order $O(a^d)$ with $a \ge 2$ can be very large even for a moderate $d$, which severely deteriorates the quality of the error bound.
For example, for a typical genomic dataset,
 the dimensionality $d =20,531$ and the sample size $n=801$ \citep{2013Tcga},
 which leads to a prohibitively large prefactor.

Next, we present a new ReLU network approximation result for H\"older smooth functions in $\mathcal{H}^\beta([0,1]^d,B_0)$ with a prefactor in the error bound only depending on the
dimension $d$ polynomially, i.e.,   $d^{\lfloor\beta\rfloor+(\beta\vee1)/2}$.

\begin{theorem} \label{thm_apx}
Assume  that $f\in\mathcal{H}^\beta([0,1]^d,B_0)$ with $\beta=s+r$, $s\in\mathbb{N}_0$ and $r\in(0,1]$. For any $M,N\in\mathbb{N}^+$, there exists a function $\phi_0$  implemented by a ReLU network with width $\mathcal{W}=38(\lfloor\beta\rfloor+1)^2d^{\lfloor\beta\rfloor+1}N\lceil\log_2(8N)\rceil$ and depth $\mathcal{D}=21(\lfloor\beta\rfloor+1)^2M\lceil\log_2(8M)\rceil$ such that
	$$\vert f(x)-\phi_0(x)\vert\leq 18B_0(\lfloor\beta\rfloor+1)^2d^{\lfloor\beta\rfloor+(\beta\vee1)/2}(NM)^{-2\beta/d},$$
	for all $x\in[0,1]^d\backslash\Omega([0,1]^d,K,\delta)$, where
	$a\vee b:=\max\{a,b\}$, $\lceil a\rceil$ denotes the smallest integer no less than $a$, and
	$$\Omega([0,1]^d,K,\delta)=\bigcup_{i=1}^d\{x=[x_1,x_2,\ldots,x_d]^\top:x_i\in\bigcup_{k=1}^{K-1}(k/K-\delta,k/K)\},$$
with $K=\lceil (MN)^{2/d}\rceil$ and $\delta$ an arbitrary number in $(0,{1}/(3K)]$.
\end{theorem}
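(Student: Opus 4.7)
The plan is to build $\phi_0$ as a composition of three ReLU subnetworks implementing a local Taylor expansion on a regular grid. Partition $[0,1]^d$ into $K^d$ cubes of side $1/K$ with $K=\lceil(NM)^{2/d}\rceil$. For $x\notin\Omega([0,1]^d,K,\delta)$ the cube index $\alpha(x)\in\{0,1,\ldots,K-1\}^d$ is unambiguous; on the cube $Q_\alpha$ I approximate $f$ by its order-$s$ Taylor polynomial about the corner $x_\alpha=\alpha/K$. Because $f\in\mathcal{H}^\beta([0,1]^d,B_0)$, the Hölder-type Taylor remainder bound gives
$$\Bigl|f(x)-\sum_{\|\gamma\|_1\le s}\frac{\partial^\gamma f(x_\alpha)}{\gamma!}(x-x_\alpha)^\gamma\Bigr|\le \frac{B_0}{s!}\|x-x_\alpha\|_2^\beta\le \frac{B_0\, d^{\beta/2}}{s!}K^{-\beta},$$
which is already of the target order $O(B_0 d^{\beta/2}(NM)^{-2\beta/d})$. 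What remains is to realize this Taylor polynomial by a ReLU network of the stated width and depth.

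I construct three subnetworks. (a) A coordinate-wise staircase $\psi_i:[0,1]\to\{0,1/K,\ldots,(K-1)/K\}$ that returns $\alpha_i(x)/K$ off the $\delta$-strips; this is the standard Shen--Lu "packed trapezoid" construction, producing $K=(NM)^{2/d}$ thresholds with width $O(N\log N)$ and depth $O(M\log M)$. Running $d$ copies in parallel multiplies the width by $d$ rather than $2^d$, which is the first place the exponential-in-$d$ prefactor is avoided. (b) A coefficient-retrieval network that, given the index vector $\psi(x)$, outputs approximations $\hat c_\gamma(\alpha)$ to the $\binom{d+s}{s}$ normalised Taylor coefficients $\partial^\gamma f(x_\alpha)/\gamma!$; this is implemented via the bit-extraction technique, which packs the $K^d\le(NM)^2$ coefficient values (rounded to $O(\log NM)$ bits) into a single deep-narrow ReLU block of size compatible with the given $\mathcal{W},\mathcal{D}$ budget. (c) A monomial-evaluator $\widehat P_\gamma(x-x_\alpha)\approx \prod_{i=1}^d(x_i-\alpha_i/K)^{\gamma_i}$ built from iterated ReLU approximations to squaring, which reaches accuracy $O((NM)^{-2\beta/d})$ with depth $O(M\log M)$ and width $O(d^{\lfloor\beta\rfloor+1}N\log N)$ after summing over $\binom{d+s}{s}$ multi-indices.

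Composing these gives $\phi_0(x)=\sum_{\|\gamma\|_1\le s}\hat c_\gamma(\psi(x))\,\widehat P_\gamma(x-\psi(x))$. The triangle inequality decomposes $|f(x)-\phi_0(x)|$ into the Taylor remainder, the coefficient error multiplied by $\|x-x_\alpha\|_\infty^{\|\gamma\|_1}\le K^{-\|\gamma\|_1}$, and the monomial error multiplied by the coefficient bound $B_0$; choosing the accuracy of (b) and (c) as $O((NM)^{-2\beta/d})$ balances all three. Counting $\binom{d+s}{s}\le(s+1)d^{s}/s!$ multi-indices produces the $d^{\lfloor\beta\rfloor}$ factor, while $\|x-x_\alpha\|_2\le\sqrt{d}/K$ supplies the extra $d^{(\beta\vee 1)/2}$; the $(\lfloor\beta\rfloor+1)^2$ factor emerges from the simultaneous appearance of multi-index count and Taylor order in both width and depth budgets. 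The main obstacle, and the whole reason for the improved prefactor compared to Yarotsky-type constructions, is step (b): a naive per-cube construction would require $K^d$ parallel subnetworks and reintroduce the exponential-in-$d$ prefactor, so bit extraction is essential, and the choice $K=\lceil(NM)^{2/d}\rceil$ is calibrated precisely so that the bit-extraction budget $NM$ matches $K^{d/2}$. A secondary technicality is that the staircase $\psi_i$ misfires in a $\delta$-neighbourhood of each jump, forcing the restriction $x\notin\Omega([0,1]^d,K,\delta)$ with $\delta\le 1/(3K)$, which is absorbed into the theorem statement.
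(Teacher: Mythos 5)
Your proposal follows essentially the same three-step construction as the paper's proof: a parallel Shen--Lu staircase network for discretization, a bit-extraction/point-fitting network for the Taylor coefficients (precisely the place where the $K^d$-fold parallelism, and hence the exponential-in-$d$ prefactor, is avoided), and ReLU monomial evaluators combined via product gates, all with the same calibration $K=\lceil(NM)^{2/d}\rceil$ and the same $\delta$-strip exceptional set. One slip worth flagging: your displayed Taylor remainder $\frac{B_0}{s!}\|x-x_\alpha\|_2^\beta$ is missing the dimension-dependent factor --- the correct multivariate H\"older--Taylor bound (Lemma A.8 of Petersen--Voigtlaender, which the paper invokes) is $B_0\, d^{s}\|x-x_\alpha\|_2^\beta$, and it is this $d^{s}$ inside the dominant remainder term that yields the $d^{\lfloor\beta\rfloor}$ in the prefactor, whereas the multi-index count you cite only enters the subdominant $(NM)^{-2(s+1)}$ network-error terms. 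You also implicitly need an explicit ReLU product gadget $\phi_\times$ to multiply the retrieved coefficient $\hat c_\gamma(\psi(x))$ by the monomial $\widehat P_\gamma(x-\psi(x))$, since both are network outputs; the paper makes this explicit.
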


Theorem \ref{thm_apx} is inspired by and builds on the  work of \cite{shen2019deep} and \cite{lu2020deep}.
Similar to the results of \cite{shen2019deep} and \cite{lu2020deep}, the  approximation error
bound in Theorem \ref{thm_apx} has the optimal approximation rate $(NM)^{-2\beta/d}$.
This error bound is non-asymptotic in the sense that it is valid for arbitrary network width and depth specified by $N$ and $M$.  The error bound is also explicit since no unknown or undefined parameters are involved.
Moreover, our error bound is given in terms of the network width and depth, which is more informative than the bounds just in terms of the network size as in many existing works.

However,  the prefactor in the approximation error bound and the network width in
Theorem \ref{thm_apx} are different from those in the result of \cite{lu2020deep},
who showed that,  for a positive integer $\beta$, and suppose that the network width and depth are chosen to be $16\beta^{d+1}(N+2)\log_2(8N)$ and $18\beta^{2}(M+2)\log_2(4M),$ respectively, the approximation error bound  is of the form $84(\beta+1)^d8^\beta(NM)^{-2\beta/d}$.
The prefactor in this bound depends on $d$ 
exponentially through the term $(\beta+1)^d 8^{\beta}$.
In comparison, the prefactor in the error bound
in Theorem \ref{thm_apx} depends on $d$ 
polynomially through $(\lfloor\beta\rfloor+1)^2d^{\lfloor\beta\rfloor+(\beta\vee1)/2}.$
This is a significant improvement
for a large $d$ with a moderate $\beta,$ which is a probable situation in nonparametric regression.
 Even in the unlikely case where $\beta= O(d)$ is a large number, our prefactor is still comparable with $O((\beta+1)^d 8^{\beta}).$



The basic idea of our proof follows that of  \cite{lu2020deep}:
we approximate a H\"older smooth function $f$ using Taylor expansion locally over
a discretization of $[0, 1]^d,$ however, we have a more careful control of the number of the partial derivatives. More specifically,
our proof consists of three steps: (a) we first construct a network $\psi$  that 
discretizes  $[0,1]^d$; (b)  we construct a second network $\phi_{\alpha}$ to approximate the Taylor coefficient; (c) We construct a third network $P_{\alpha}(x)$ to approximate the polynomial $x^{\alpha}$. Putting all these together, we use
$$\phi(x)=\sum_{\Vert\alpha\Vert_1\le s}\phi_\times\Big( \frac{\phi_\alpha(x)}{\alpha!}, P_\alpha(x-\psi(x))\Big)$$ to approximate $f$,
where $\phi_{\times}(\cdot, \cdot)$ is a network function  approximating the product function of two scalar inputs.

To use the information of higher order smoothness, the existing results such as
\citet{yarotsky2017error} and \cite{lu2020deep}, are also based on the idea of approximating the Taylor expansion of the target function locally on a discretized hyper cube. Two key components of the technique used in the proof affects the prefactor of the approximation error:  (a) how the hyper cube is discretized and the target function is locally approximated;  (b) how the number of partial derivatives is upper bounded. We use the method of discretization and local approximation in  \cite{lu2020deep}, which avoids the $2^d$ prefactor appeared in \citet{yarotsky2017error} and \citet{schmidt2020nonparametric}. At the same time, we changed the way of bounding the number of partial derivatives, which leads to a $O(d^\beta)$ prefactor instead of $O(8^{\beta}(\beta+1)^d)$ in  \cite{lu2020deep} and $O((2e)^d (\beta+1)^d)$ in
Theorem 5 of \citet{schmidt2020nonparametric}.
The $d^\beta$ prefactor is clearly an improvement over  $(\beta+1)^d$ when $d$ is large and $\beta$ is moderate.

Based on Theorem \ref{thm_apx}, we can establish the approximation error bounds under the $L^p({\nu})$ norm for $p\in(0,\infty)$ with an absolutely continuous $\nu$ (with respect to the Lebesgue measure on $\mathbb{R}^d$ ).
For the approximation result under  the $L^\infty([0,1]^d)$ norm, we have the following
corollary of Theorem \ref{thm_apx}.

\begin{corollary}\label{thm_apx1}
	Assume that $f\in\mathcal{H}^\beta([0,1]^d, B_0)$ with $\beta=s+r$, $s\in\mathbb{N}_0$ and $r\in(0,1]$. For any $M,N\in\mathbb{N}^+$, there exists a function $\phi$ implemented by a ReLU network  with width $\mathcal{W}=38(\lfloor\beta\rfloor+1)^23^dd^{\lfloor\beta\rfloor+1}N\lceil\log_2(8N)\rceil$ and depth $\mathcal{D}=21(\lfloor\beta\rfloor+1)^2M\lceil \log_2(8M)\rceil+2d$ such that
	$$\vert f(x)-\phi(x) \vert\leq 19B_0(\lfloor\beta\rfloor+1)^2
d^{\lfloor\beta\rfloor+(\beta\vee1)/2}(NM)^{-2\beta/d},\ x \in [0,1]^d.$$
\end{corollary}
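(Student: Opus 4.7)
The plan is to remove the ``trifling set'' $\Omega([0,1]^d, K, \delta)$ from the pointwise bound of Theorem \ref{thm_apx} by running $3^d$ shifted copies of the Theorem \ref{thm_apx} network in parallel and then selecting the right copy with a small ReLU gadget, whose extra resources exactly account for the multiplicative $3^d$ factor in the width and the additive $2d$ in the depth of Corollary \ref{thm_apx1}.

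Fix $\delta = 1/(3K)$. For each $\xi\in\{0,1,2\}^d$, apply Theorem \ref{thm_apx} with the construction's discretization grid shifted coordinate-wise by $\xi/(3K)$; this produces $3^d$ networks $\phi_\xi$, each with the same width, depth, and pointwise error bound as in Theorem \ref{thm_apx}, and each approximating $f$ off its own shifted trifling set $\Omega_\xi$. The geometric key is that the three shifted forbidden strips $(k/K - \delta - j/(3K),\, k/K - j/(3K))$ for $j\in\{0,1,2\}$ are pairwise disjoint in each coordinate (since $\delta \leq 1/(3K)$), so for every $x\in[0,1]^d$ there exists at least one multi-index $\xi^\star(x)\in\{0,1,2\}^d$ with $x\notin\Omega_{\xi^\star(x)}$, and such a good $\xi_i^\star$ can be chosen independently in each coordinate. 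For this $\xi^\star$, Theorem \ref{thm_apx} yields $|f(x) - \phi_{\xi^\star(x)}(x)| \leq 18 B_0(\lfloor\beta\rfloor+1)^2 d^{\lfloor\beta\rfloor+(\beta\vee 1)/2}(NM)^{-2\beta/d}$, already within the target.

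The remaining ingredient is a ReLU-implementable selector that outputs $\phi(x) := \phi_{\xi^\star(x)}(x)$. I would implement it as a coordinate-wise cascade: for the $i$-th coordinate, a constant-depth hat-function gadget of $x_i$ produces a $\{0,1\}$-valued mask $(h_0^{(i)}, h_1^{(i)}, h_2^{(i)})$ indicating which shift in that coordinate is safe, and this mask is combined with the surviving candidates via the ReLU identity $h\cdot v = \sigma(v - M(1-h)) - \sigma(-v - M(1-h))$ (valid for $h\in\{0,1\}$ and $|v|\leq M$) to prune the candidate set by a factor of $3$. After cascading through all $d$ coordinates, a single surviving candidate remains; each coordinate stage adds two layers, giving the stated additive $2d$ depth. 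The main obstacle is carrying out this cascaded selection cleanly within ReLU arithmetic while respecting the stated width-per-copy and checking that the small quantization slack introduced by the selector is absorbed in the prefactor bump $18 \to 19$ in the final bound.
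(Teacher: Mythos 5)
Your geometric observation---that the three shifted trifling sets are pairwise disjoint in each coordinate so that at least one of the $3^d$ shifts is ``safe'' for any $x\in[0,1]^d$---is exactly the mechanism the paper exploits, and the $3^d$ width blow-up and $2d$ extra depth come from the same place. But the implementation via an explicit selector has a genuine gap. The gadget $h\cdot v=\sigma(v-M(1-h))-\sigma(-v-M(1-h))$ is exact only for $h\in\{0,1\}$; for $h\in(0,1)$ it outputs $0$ whenever $|v|<M(1-h)$, not $h\cdot v$. Since each mask $h_j^{(i)}$ is produced by a ReLU hat function of $x_i$, it is necessarily a continuous function of $x$ and therefore cannot be exactly $\{0,1\}$-valued on all of $[0,1]$; there must exist transition regions where every $h_j^{(i)}$ is strictly less than $1$. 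In such a region, after cascading, every surviving candidate can be zeroed out by the gadget and the summed output can collapse to $0$, giving an $O(B_0)$ error rather than ``small quantization slack.'' Making the transitions harmless would require ensuring they only occur where all active candidates already agree to within $\varepsilon$, which is not established and is in fact the hard part.

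The paper sidesteps this entirely by replacing the explicit selector with the median trick. It defines $\phi_i(x):=\mathrm{mid}\bigl(\phi_{i-1}(x-\delta e_i),\,\phi_{i-1}(x),\,\phi_{i-1}(x+\delta e_i)\bigr)$ inductively over coordinates, where $\mathrm{mid}(\cdot,\cdot,\cdot)$ is the median of three real numbers and is exactly computable by a width-$14$, depth-$2$ ReLU block. No decision is ever computed: since for $x\in E_{i+1}$ at least two of the three shifted arguments lie in $E_i$, at least two of the three inputs to $\mathrm{mid}$ are within the target tolerance, and therefore so is their median, which is always one of (or between) the three inputs---no zeroing-out is possible. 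Note also that the induction shifts the \emph{argument} by $\pm\delta e_i$ rather than re-building a shifted grid, so the additional error it must absorb at each step is $B_0\delta^{\beta\wedge1}$ coming from the H\"older continuity of $f$; the choice $\delta=\tfrac{1}{3}K^{-(\beta\vee1)}$ makes the accumulated $dB_0\delta^{\beta\wedge1}$ comparable to $(NM)^{-2\beta/d}$, and this accumulated shift error, not selector slack, is what the bump from $18$ to $19$ absorbs. To repair your argument you would essentially have to replace the hard selector by this median (or an equivalent) device, at which point it becomes the paper's proof.
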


The approximation error under  $L^\infty([0,1]^d)$
is the same as that of Theorem \ref{thm_apx}, at the price that the network width
should be as large as $3^d$ times of that in Theorem \ref{thm_apx}.

Lastly, we note that, by Proposition 1 of \cite{yarotsky2017error}, in terms of the computational power and complexity of a neural network, there is no substantial difference in using the ReLU activation function and  other piece-wise linear activation functions with finitely many
inflection points.
To elaborate, let $\zeta: \mathbb{R}\to\mathbb{R}$ be any continuous piece-wise linear function with $M$
inflection points
($1\leq M<\infty$). If a network $f_\zeta$ is activated by $\zeta$, of depth $\mathcal{D}$, size $\mathcal{S}$ and the number of neurons $\mathcal{U}$, then there exists a ReLU activated network with depth $\mathcal{D}$, size not more than $(M+1)^2\mathcal{S}$, the number of neurons not more than $(M+1)\mathcal{U}$, that computes the same function as $f_\zeta$. Conversely, let $f_\sigma$ be a ReLU activated network  of depth $\mathcal{D}$, size $\mathcal{S}$ and the number of neurons $\mathcal{U}$, then there exists a network with  activation function $\zeta$, of depth $\mathcal{D}$, size $4\mathcal{S}$ and
the number of neurons $2\mathcal{U}$ that computes the same function $f_\sigma$ on a bounded subset of $\mathbb{R}^d$.
	
\section{Non-asymptotic error bounds} 
\label{sec3}
	
Lemma \ref{lemma1} provides the basis for establishing the consistency and non-asymptotic error bounds. To ensure consistency, the two items on the right hand side of (\ref{oracle}) should vanish as $n\to \infty$. For the non-asymptotic error bound,  the exact rate of convergence will be determined by a trade-off between the stochastic error and the approximation error.
We first state a consistency result and then present the result on the non-asymptotic error bound
of nonparametric regression estimator using neural networks.

\begin{theorem}[Consistency]\label{thm1}
		Under model (\ref{model}), suppose that Assumption \ref{A1} holds, the target function  $f_0$ is  continuous  on $[0,1]^d$, and $ \Vert f_0\Vert_\infty\leq\mathcal{B}$ for some $\mathcal{B}\geq1$, and the function class of feedforward neural networks $\mathcal{F}_n=\mathcal{F}_{\mathcal{D},\mathcal{W},\mathcal{U},\mathcal{S},\mathcal{B}}$ with continuous piecewise-linear activation function  with finitely many
inflection points satisfies
		\begin{equation*}
			\mathcal{S}\to\infty \quad {\rm and} \quad \mathcal{B}^5(\log n)^5\, \frac{1}{n} \mathcal{S}\mathcal{D}\log(\mathcal{S})\to 0, \ \text{ as } n \to \infty.
		\end{equation*}
Then,  the prediction error of the empirical risk minimizer $\hat{f}_n$  is consistent in the sense that
	$$\mathbb{E} \Vert \hat{f}_n-f_0\Vert^2_{L^2(\nu)}\to 0 \ \  {\rm as\ } n\to\infty.$$
\end{theorem}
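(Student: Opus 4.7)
The plan is to apply the oracle inequality of Lemma~\ref{lemma1} and show that both the stochastic and approximation terms on the right-hand side of (\ref{oracle}) tend to zero. Since Lemma~\ref{lemma1} already gives
\begin{equation*}
\mathbb{E}\|\hat{f}_n-f_0\|_{L^2(\nu)}^2 \le C_0\mathcal{B}^2(\log n)^3\frac{\mathcal{S}\mathcal{D}\log\mathcal{S}}{n} + 2\inf_{f\in\mathcal{F}_n}\|f-f_0\|_{L^2(\nu)}^2,
\end{equation*}
the first term vanishes immediately by the hypothesis $\mathcal{B}^2(\log n)^3 \mathcal{S}\mathcal{D}\log(\mathcal{S})/n\to 0$. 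So the real work is to show that the approximation error $\inf_{f\in\mathcal{F}_n}\|f-f_0\|^2_{L^2(\nu)}$ vanishes as $\mathcal{S}\to\infty$.

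For the approximation term, fix $\varepsilon>0$. Since $f_0$ is continuous on the compact set $[0,1]^d$, a standard mollification (or Bernstein polynomial) argument produces a Lipschitz function $g$ (i.e., $g\in\mathcal{H}^{1}([0,1]^d,B_\varepsilon)$ for some $B_\varepsilon<\infty$) with $\|f_0-g\|_\infty<\varepsilon/2$; by replacing $g$ by $(-\mathcal{B})\vee g\wedge\mathcal{B}$ we may also assume $\|g\|_\infty\le\mathcal{B}$. Then Corollary~\ref{thm_apx1}, applied with $\beta=1$ and sufficiently large but fixed positive integers $N,M$ (depending only on $\varepsilon,d,B_\varepsilon$), yields a ReLU network $\phi_\varepsilon$ of fixed width and depth with $\|g-\phi_\varepsilon\|_\infty<\varepsilon/2$, hence $\|f_0-\phi_\varepsilon\|_\infty<\varepsilon$. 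Truncating $\phi_\varepsilon$ at level $\mathcal{B}$ (via a small fixed ReLU sub-network) preserves this bound because $\|f_0\|_\infty\le\mathcal{B}$, so the truncated network has size at most $\mathcal{S}_\varepsilon$ and sup-norm $\le\mathcal{B}$, and belongs to $\mathcal{F}_n$ as soon as $\mathcal{S}\ge\mathcal{S}_\varepsilon$ (padding with identity blocks, implementable by ReLU, lets us match any larger prescribed depth/width). Therefore $\inf_{f\in\mathcal{F}_n}\|f-f_0\|^2_{L^2(\nu)}\le\varepsilon^2$ for all large $n$, and since $\varepsilon$ is arbitrary this infimum tends to $0$. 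Combining with Step~1 proves $\mathbb{E}\|\hat{f}_n-f_0\|^2_{L^2(\nu)}\to 0$.

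The main obstacle is the last containment step: the hypothesis only asserts $\mathcal{S}\to\infty$, but the class $\mathcal{F}_n$ constrains depth, width, and number of neurons simultaneously, so one must justify embedding a fixed-architecture approximant $\phi_\varepsilon$ into $\mathcal{F}_n$ for large $n$. This is handled by the inequality $\max\{\mathcal{W},\mathcal{D}\}\le\mathcal{S}$ together with the ability to pad ReLU networks with identity layers/neurons without altering the computed function; for pathological regimes where only depth (or only width) grows, one instead invokes the corresponding deep-narrow (or wide-shallow) universal approximation result so that the fixed-architecture $\phi_\varepsilon$ can be rewritten to fit the available shape. A secondary subtlety is verifying that the clipping to $[-\mathcal{B},\mathcal{B}]$ is itself a ReLU operation of bounded additional size, which is immediate since $(-\mathcal{B})\vee x\wedge\mathcal{B}=\sigma(x+\mathcal{B})-\sigma(x-\mathcal{B})-\mathcal{B}$. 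Once these are in place, the consistency conclusion follows directly from the oracle inequality.
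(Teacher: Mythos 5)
Your argument opens with exactly the decomposition the paper uses: Lemma~\ref{lemma1} splits the prediction error into a stochastic term (which vanishes by the hypothesis on $\mathcal{B}^2(\log n)^3\mathcal{S}\mathcal{D}\log(\mathcal{S})/n$) and the approximation term $\inf_{f\in\mathcal{F}_n}\|f-f_0\|^2_{L^2(\nu)}$. The difference from the paper lies in how you show the approximation term vanishes. The paper dispatches it in one line by citing Theorem~1 of \cite{yarotsky2018optimal}, which bounds $\inf_{f\in\mathcal{F}_n}\|f-f_0\|_\infty$ by $c_1\,\omega_{f_0}(c_2\mathcal{S}^{-p/d})$; since $f_0$ is uniformly continuous on $[0,1]^d$, this modulus-of-continuity bound tends to zero as $\mathcal{S}\to\infty$. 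You instead proceed in two steps: mollify $f_0$ to a Lipschitz surrogate $g$ with $\|f_0-g\|_\infty<\varepsilon/2$, then approximate $g$ via the paper's own Corollary~\ref{thm_apx1} with $\beta=1$, followed by truncation to level $\mathcal{B}$ and an architecture-embedding argument to place the approximant inside $\mathcal{F}_n$. Both routes are legitimate. Yarotsky's theorem is the more parsimonious choice --- it handles continuity directly, needs no intermediate mollification, and already expresses the bound in terms of the network size $\mathcal{S}$ --- whereas your route is self-contained within the paper's own approximation machinery, which has pedagogical value. The embedding concern you flag (that $\mathcal{S}\to\infty$ alone does not determine how $\mathcal{W}$ and $\mathcal{D}$ grow, so a fixed-shape approximant may not literally live in $\mathcal{F}_n$) is a genuine subtlety of the theorem as stated, but it affects the paper's citation of Yarotsky's theorem equally, since that result also presupposes a realizable architecture; your making it explicit and proposing padding plus deep-narrow/wide-shallow universal approximation as the resolution is a reasonable way to close the gap.
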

	Theorem {\ref{thm1}}  is a direct consequence of Lemma \ref{lemma1} and
Theorem 1 on  the approximation
of continuous function by ReLU neural networks in \cite{yarotsky2018optimal}.
The conditions in Theorem \ref{thm1} are sufficient for the consistency of the deep neural regression, and they are relatively mild in terms of the assumptions on the underlying target $f_0$ and the distribution of $Y$.
{\cite{van1996consistency} gave the sufficient and necessary conditions for the consistency of the least squares estimation in nonparametric regression model (\ref{model}) under the assumptions
that $f_0\in\mathcal{F}_n$, the error $\eta$ is symmetric about 0 and it has zero point mass at 0. 
Their results are for the convergence of the empirical error $\Vert \hat{f}_n-f_0\Vert^2_n:=\sum_{i=1}^n\vert \hat{f}_n(X_i)-f_0(X_i)\vert^2/n.$
}

{\begin{theorem}[Non-asymptotic error bound]\label{thm2}
		Under model (\ref{model}), suppose that Assumptions \ref{A1}-\ref{A2} hold, the probability measure of the covariate $\nu$ is absolutely continuous with respect to the Lebesgue measure and $\mathcal{B}\geq \max\{B_0,1\}$. Then, for any $N, M\in\mathbb{N}^+$, the function class of ReLU multi-layer perceptrons $\mathcal{F}_n=\mathcal{F}_{\mathcal{D},\mathcal{W},\mathcal{U},\mathcal{S},\mathcal{B}}$ with width $\mathcal{W}=38(\lfloor\beta\rfloor+1)^2d^{\lfloor\beta\rfloor+1}N\lceil\log_2(8N)\rceil$ and depth $\mathcal{D}=21(\lfloor\beta\rfloor+1)^2M\lceil\log_2(8M)\rceil$, for $n \ge \text{Pdim}(\mathcal{F}_n)/2$,
		the prediction error of the ERM
		$\hat{f}_n$ satisfies
		$$\mathbb{E} \Vert \hat{f}_n-f_0\Vert^2_{L^2(\nu)}\leq C\mathcal{B}^5
		(\log n)^5\, \frac{1}{n}\mathcal{S}\mathcal{D}\log(\mathcal{S}) +324B_0^2(\lfloor\beta\rfloor+1)^4d^{2\lfloor\beta\rfloor+\beta\vee1}(NM)^{-4\beta/d},$$
		where $C>0$ is a constant not depending on $n,d,\mathcal{B},\mathcal{S},\mathcal{D},B_0,\beta,N$ or $M$.
	\end{theorem}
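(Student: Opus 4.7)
The plan is to combine the oracle inequality of Lemma \ref{lemma1} with the approximation result of Theorem \ref{thm_apx}. Under Assumption \ref{A1} and the hypothesis $\|f_0\|_\infty \le B_0 \le \mathcal{B}$, Lemma \ref{lemma1} already supplies
$$\mathbb{E}\|\hat{f}_n - f_0\|_{L^2(\nu)}^2 \;\le\; C_0\mathcal{B}^2(\log n)^3 \frac{\mathcal{S}\mathcal{D}\log\mathcal{S}}{n} \;+\; 2\inf_{f\in\mathcal{F}_n}\|f-f_0\|_{L^2(\nu)}^2,$$
valid whenever $n \ge \text{Pdim}(\mathcal{F}_n)/2$. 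The stochastic term on the right is exactly the first term of the claim (up to the absolute constant that the theorem absorbs into $C$), so the proof reduces to bounding the $L^2(\nu)$-approximation error by the second term of the claim.

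To bound the approximation error, I would apply Theorem \ref{thm_apx} with the specified $N, M$ and any admissible $\delta \in (0, 1/(3K)]$, where $K = \lceil (MN)^{2/d}\rceil$. This produces a ReLU network $\phi_0^{(\delta)}$ of exactly the width $\mathcal{W}=38(\lfloor\beta\rfloor+1)^2 d^{\lfloor\beta\rfloor+1} N\lceil\log_2(8N)\rceil$ and depth $\mathcal{D}=21(\lfloor\beta\rfloor+1)^2 M\lceil\log_2(8M)\rceil$ prescribed in the theorem, satisfying
$$|\phi_0^{(\delta)}(x) - f_0(x)| \;\le\; E \;:=\; 18 B_0 (\lfloor\beta\rfloor+1)^2 d^{\lfloor\beta\rfloor+(\beta\vee1)/2}(NM)^{-2\beta/d}$$
for every $x \in [0,1]^d \setminus \Omega([0,1]^d, K, \delta)$. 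Since $\|f_0\|_\infty \le B_0 \le \mathcal{B}$, clipping $\phi_0^{(\delta)}$ to $[-\mathcal{B}, \mathcal{B}]$ via the ReLU gadget $t \mapsto \sigma(t+\mathcal{B}) - \sigma(t-\mathcal{B}) - \mathcal{B}$ (which contributes only a constant number of extra layers and neurons, absorbed into the network bookkeeping) places the result inside $\mathcal{F}_n$ and cannot worsen the pointwise error on $\Omega^c$.

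The heart of the argument is the passage from this pointwise-on-$\Omega^c$ estimate to a genuine $L^2(\nu)$ estimate on all of $[0,1]^d$. Splitting and using that both functions are bounded by $\mathcal{B}$,
$$\|\phi_0^{(\delta)} - f_0\|_{L^2(\nu)}^2 \;\le\; E^2 \,\nu(\Omega^c) + 4\mathcal{B}^2 \,\nu(\Omega) \;\le\; E^2 + 4\mathcal{B}^2 \,\nu(\Omega).$$
The exceptional set $\Omega$ is a union of $d(K-1)$ axis-aligned slabs of thickness $\delta$, whose Lebesgue measure is therefore at most $d(K-1)\delta$. By the hypothesis that $\nu$ is absolutely continuous with respect to Lebesgue measure, $\nu(\Omega) \to 0$ as $\delta \downarrow 0$. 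Crucially, the network width and depth produced by Theorem \ref{thm_apx} do not depend on $\delta$, so the whole family $\{\phi_0^{(\delta)}\}_{\delta \in (0,1/(3K)]}$ lies in $\mathcal{F}_n$ after clipping. Taking the infimum over $\delta$ thus yields $\inf_{f \in \mathcal{F}_n} \|f - f_0\|_{L^2(\nu)}^2 \le E^2 = 324 B_0^2 (\lfloor\beta\rfloor+1)^4 d^{2\lfloor\beta\rfloor+\beta\vee1}(NM)^{-4\beta/d}$. Substituting this back into the oracle inequality yields the theorem.

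The main obstacle is the $\delta \downarrow 0$ step: one must verify that the exceptional-set mass can be made negligibly small in the target measure $\nu$, and this is precisely where the absolute-continuity assumption on $\nu$ is essential (for a singular $\nu$ concentrated near the grid points $\{k/K\}$, the $\Omega$-term could persist). A secondary bookkeeping step is confirming that the clipping layers and any constant adjustments needed to realise $\phi_0^{(\delta)}$ inside the multi-layer perceptron class with precisely the stated $\mathcal{W}, \mathcal{D}$ do not enlarge the leading prefactors; this is routine since both clipping and the small width/depth adjustments from Theorem \ref{thm_apx} contribute only constant factors that the theorem's constant $C$ is allowed to absorb.
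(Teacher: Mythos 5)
Your proof is correct and follows essentially the same route as the paper: invoke Lemma \ref{lemma1} for the oracle inequality, then Theorem \ref{thm_apx} for the approximation bound, and pass from the pointwise bound off the slab set $\Omega$ to an $L^2(\nu)$ bound using the absolute continuity of $\nu$. Your write-up is actually slightly more careful than the paper's on two minor points: you make the $\delta\downarrow 0$ infimum argument explicit (the paper simply asserts the $L^2$ bound after remarking that $\Omega$ has small Lebesgue measure), and you include the clipping gadget $t\mapsto\sigma(t+\mathcal{B})-\sigma(t-\mathcal{B})-\mathcal{B}$ to guarantee the approximant actually lies in $\mathcal{F}_{\mathcal{D},\mathcal{W},\mathcal{U},\mathcal{S},\mathcal{B}}$ (i.e., obeys the sup-norm bound $\mathcal{B}$), a step the paper leaves implicit.
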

}

Under the assumption that the target function $f_0$ belongs to a H\"older class,  non-asymptotic error bounds can be established. {Similar results have been shown by \citet{bauer2019deep,nakada2020adaptive,schmidt2020nonparametric} and \citet{kohler2021rate}.}
Our error bound is different from the existing ones in the sense that the prefactor of our approximation error depends on $d$ polynomially, instead of
exponentially.

The upper bound of the prediction error 
in Theorem \ref{thm2} is a sum of the upper bound on the stochastic error $C\mathcal{B}^5{\mathcal{S}\mathcal{D}\log(\mathcal{S})(\log n)^5}/{n}$ and  the approximation error $324B_0^2(\lfloor\beta\rfloor+1)^4d^{2\lfloor\beta\rfloor+\beta\vee1}(NM)^{-4\beta/d}$.
Two important aspects worth noting. First, our error bound is non-asymptotic and explicit  in the sense that  no unclearly defined 
constant is involved. The prefactor $324B_0^2(\lfloor\beta\rfloor+1)^4d^{2\lfloor\beta\rfloor+\beta\vee1}$ in the upper bound of approximation error depends on the dimension $d$ polynomially, drastically different from the exponential dependence in existing results. Second, the approximation rate $(NM)^{-4\beta/d}$
is in terms of the width $\mathcal{W}=38(\lfloor\beta\rfloor+1)^2d^{\lfloor\beta\rfloor+1}N\lceil\log_2(8N)\rceil$ and depth $\mathcal{D}=21(\lfloor\beta\rfloor+1)^2M\lceil\log_2(8M)\rceil$, rather than just the size $\mathcal{S}$ of the network.
This provides insights into the relative merits of different the network designs and provides some
qualitative guidance on the network design.

To achieve the best error rate,
we need to balance the trade-off between the stochastic error and the approximation error.
On one hand, the upper bound for the stochastic error $C\mathcal{B}^5{\mathcal{S}\mathcal{D}\log(\mathcal{S})(\log n)^5}/{n}$ increases as the complexity and richness of $\mathcal{F}_{\mathcal{D},\mathcal{W},\mathcal{U},\mathcal{S},\mathcal{B}}$ increase;  larger $\mathcal{D}$, $\mathcal{S}$ and $\mathcal{B}$ lead to a larger upper bound on the stochastic error. On the other hand, the upper bound for the approximation error $324B_0^2(\lfloor\beta\rfloor+1)^4d^{2\lfloor\beta\rfloor+\beta\vee1}(NM)^{-4\beta/d}$  decreases as the size of  $\mathcal{F}_{\mathcal{D},\mathcal{W},\mathcal{U},\mathcal{S},\mathcal{B}}$ increases;   
larger $\mathcal{D}$ and $\mathcal{W}$ lead to smaller upper bound on the approximation error.


In Section \ref{efficiency} we present the specific error bounds for various designs of network structures,  including detailed descriptions of how the prefactors in these bounds depend on the
dimension $d$ of the predictor.
	
\section{Comparing network structures}
\label{efficiency}
Theorem \ref{thm2} provides an explicit expression of how the non-asymptotic error bounds depend on the network parameters, which can be used to quantify the relative efficiency of  networks with different shapes in terms of the network size  needed to achieve the optimal error bound. The calculations given below demonstrate the advantages of deep networks over shallow ones in the sense that deep networks can achieve the same error bound as the shallow networks with a fewer total number of parameters in the network.
We will make this statement quantitatively clear in terms of the notion of relative efficiency between
networks defined below.

\subsection{Relative efficiency of network structures}
Let $\cS_1$ and $\cS_2$ be the sizes of two neural networks
$\cN_1$ and $\cN_2$ needed to achieve the same non-asymptotic error bound as given in Theorem \ref{thm2}. We define the \textit{network relative efficiency} between two networks $\cN_1$ and $\cN_2$ as
\begin{equation}
\label{re}
\text{NRE}(\cN_1, \cN_2) = \frac{\log \cS_2}{\log \cS_1}.
\end{equation}
Here we use the logarithm of the size because the size of the network for achieving the optimal error rate has the form $\cS=[n^{d/(d+2\beta)}]^s$ for some $s > 0$ up to a factor only involving the power of $\log n$, as will be seen below.  Let $r=\text{NRE}(\cN_1, \cN_2)$.
In terms of sample complexity, this definition of relative efficiency implies that, if it takes a sample of size
$n$ for network $\cN_1$ to achieve the optimal error rate, then it will take a sample of size
$n^r$ to achieve the same error rate.

For any multilayer neural network in $\mathcal{F}_{\mathcal{D},\mathcal{W},\mathcal{U},\mathcal{S},\mathcal{B}}$, its parameters naturally satisfy
\begin{equation}
 \label{size}
\max\{\mathcal{W},\mathcal{D}\}\leq
\mathcal{S}\leq
\mathcal{W}(d+1)+(\mathcal{W}^2+\mathcal{W})(\mathcal{D}-1)+\mathcal{W}+1
=O(\mathcal{W}^2\mathcal{D}).
\end{equation}
Corollaries \ref{c1}-\ref{c3} below follow from this relationship and Theorem \ref{thm2}.

\begin{corollary}[Deep with fixed width networks]
	\label{c1}
		Under model (\ref{model}), suppose that Assumptions  \ref{A1}-\ref{A2} hold, $\nu$ is absolutely continuous with respect to the Lebesgue measure, and $\mathcal{B}\geq\max\{1,B_0\}$. Then, for any $N\in\mathbb{N}^+$ and the function class of ReLU multi-layer perceptrons $\mathcal{F}_n=\mathcal{F}_{\mathcal{D},\mathcal{W},\mathcal{U},\mathcal{S},\mathcal{B}}$ with depth $\mathcal{D}$,  width $\mathcal{W}$ and size  $\mathcal{S} $ given by
$
\mathcal{D}=21(\lfloor\beta\rfloor+1)^2 \lceil n^{d/2(d+2\beta)}\log_2(8n^{d/2(d+2\beta)})\rceil,
\mathcal{W}=38(\lfloor\beta\rfloor+1)^2d^{s+1}N\lceil\log_2(8N)\rceil, \
\mathcal{S}=O(n^{d/2(d+2\beta)}\log_2 n),
$
the ERM $\hat{f}_n \in\arg\min_{f\in\mathcal{F}_n}L_n(f) $ satisfies
\begin{align*}
	\mathbb{E} \Vert \hat{f}_n-f_0\Vert^2_{L^2(\nu)}\leq& \Big\{c_1\mathcal{B}^5{\color{black}(\log n)^8}+324B_0^2d^{2\lfloor\beta\rfloor+\beta\vee1}N^{-4\beta/d}\Big\}(\lfloor\beta\rfloor+1)^4n^{-2\beta/(d+2\beta)},\\
	\leq&c_2\mathcal{B}^5N^{-4\beta/d}(\lfloor\beta\rfloor+1)^{4}d^{2\lfloor\beta\rfloor+\beta\vee1}
{\color{black}(\log n)^8}n^{-2\beta/(d+2\beta)},
\end{align*}
for $n \ge \text{Pdim}(\mathcal{F}_n)/2$, where $c_1,c_2>0$ are constants which do not depend on $n,\mathcal{B},B_0,\beta$ or $N$.
	\end{corollary}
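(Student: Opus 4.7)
The plan is to specialize Theorem \ref{thm2} by holding the width parameter $N$ fixed (so the network stays narrow in $n$) and growing the depth parameter $M$ with $n$ at the rate that balances the stochastic error against the approximation error for a fixed-width deep network.

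First, I would set $M = \lceil n^{d/[2(d+2\beta)]}\rceil$ in Theorem \ref{thm2}. Substituted into $\mathcal{D} = 21(\lfloor\beta\rfloor+1)^2 M\lceil \log_2(8M)\rceil$, this reproduces exactly the depth formula in the statement. Keeping $N$ as a fixed integer produces the stated width $\mathcal{W} = 38(\lfloor\beta\rfloor+1)^2 d^{s+1}N\lceil\log_2(8N)\rceil$, which does not depend on $n$. Because $\mathcal{W}$ is then constant in $n$, the general size inequality $\mathcal{S} = O(\mathcal{W}^2\mathcal{D})$ in display~(\ref{size}) collapses to $\mathcal{S} = O(\mathcal{D}) = O(M\log M) = O(n^{d/[2(d+2\beta)]}\log n)$, matching the asserted size expression.

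Next, I would substitute these parameter choices into the two-term bound of Theorem \ref{thm2}. For the approximation piece, $(NM)^{-4\beta/d} = N^{-4\beta/d}M^{-4\beta/d} \le N^{-4\beta/d}\,n^{-2\beta/(d+2\beta)}$, so that its contribution is bounded by
$$324 B_0^2 (\lfloor\beta\rfloor+1)^4 d^{2\lfloor\beta\rfloor+\beta\vee 1}\, N^{-4\beta/d}\, n^{-2\beta/(d+2\beta)}.$$
For the stochastic piece, $\mathcal{S}\mathcal{D} = O((\lfloor\beta\rfloor+1)^4\, n^{d/(d+2\beta)}(\log n)^2)$ and $\log\mathcal{S} = O(\log n)$, so that, combined with the $(\log n)^3$ prefactor from Lemma \ref{lemma1}, the term $C\mathcal{B}^2(\log n)^3\mathcal{S}\mathcal{D}\log(\mathcal{S})/n$ can be bounded by $c_1\mathcal{B}^2(\lfloor\beta\rfloor+1)^4(\log n)^{5}n^{-2\beta/(d+2\beta)}$ after absorbing all constants depending on $N$ and $d$ into $c_1$. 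Summing the two pieces and factoring out $(\lfloor\beta\rfloor+1)^4 n^{-2\beta/(d+2\beta)}$ yields the first inequality of the corollary. The second follows by bounding the bracket by a constant multiple of its larger term, which is possible under $\mathcal{B}\ge\max\{1,B_0\}$ and by merging the two additive contributions into a single constant $c_2$.

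The proof is essentially bookkeeping rather than new mathematical content, since all the substantive work was already done in Lemma \ref{lemma1} and Theorem \ref{thm2}. The only points that require care are (a) verifying that $\mathcal{S} = O(\mathcal{W}^2\mathcal{D})$ really does reduce the size growth to $O(\mathcal{D})$ when $\mathcal{W}$ is constant in $n$, and (b) confirming that the hypothesis $n \ge \text{Pdim}(\mathcal{F}_n)/2$ of Theorem \ref{thm2} is automatically satisfied for sufficiently large $n$, which holds because $\text{Pdim}(\mathcal{F}_n) = O(\mathcal{S}\mathcal{D}\log\mathcal{S}) = O(n^{d/(d+2\beta)}(\log n)^3) = o(n)$ under our parameter choices. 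No serious obstacle is anticipated.
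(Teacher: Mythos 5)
Your proposal follows exactly the same route as the paper's proof: specialize Theorem~\ref{thm2} with $M\approx n^{d/[2(d+2\beta)]}$ while holding $N$ fixed, use the size relation $\mathcal{S}=O(\mathcal{W}^2\mathcal{D})$, and then substitute into the two-term bound. One small point worth flagging: you write that $c_1$ absorbs constants ``depending on $N$ and $d$,'' which is a faithful account of what the bookkeeping actually produces (the stochastic term carries $\mathcal{W}^2\propto N^2\log^2 N$), but this sits uneasily with the corollary's assertion that $c_1$ does \emph{not} depend on $N$; the paper's own proof glosses over the same issue by balancing $(\log n)^3 M^2(\log M)^2/n$ against $M^{-4\beta/d}$ without tracking the $\mathcal{W}^2$ factor, so your version is no worse and arguably more honest about where the $N$-dependence lands.
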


Corollary \ref{c1} is a direct consequence of Theorem \ref{thm2}. We note that the prefactor  depends on $d$ at most polynomially.
	
	
\begin{corollary}[Wide with fixed depth networks]
\label{c2}
		Under model (\ref{model}), suppose that Assumptions \ref{A1}-\ref{A2} hold, $\nu$ is absolutely continuous with respect to Lebesgue measure and $\mathcal{B}\geq\max\{1,B_0\}$. Then, for any $M\in\mathbb{N}^+$ and the function class of ReLU multilayer perceptrons
{ $\mathcal{F}_n=\mathcal{F}_{\mathcal{D},\mathcal{W},\mathcal{U},\mathcal{S},\mathcal{B}}$}	with depth $\mathcal{D}$,  width $\mathcal{W}$ and size  $\mathcal{S} $ given by
$
 \mathcal{D}= 21(\lfloor\beta\rfloor+1)^2  M\lceil\log_2(8M)\rceil, \
 \mathcal{W} = 38(\lfloor\beta\rfloor+1)^2d^{\lfloor\beta\rfloor+1}\lceil n^{d/2(d+2\beta)}\log_2(8n^{d/2(d+2\beta)})\rceil,\
\mathcal{S}=O(n^{d/(d+2\beta)}{\color{black}(\log_2 n)^{2}}),
$
 the 
ERM $\hat{f}_n \in\arg\min_{f\in\mathcal{F}_n}L_n(f) $ satisfies

\begin{align*}
	\mathbb{E} \Vert \hat{f}_n-f_0\Vert^2_{L^2(\nu)}\leq& \Big\{c_1\mathcal{B}^5{\color{black}(\log n)^8}+324B_0^2d^{2\lfloor\beta\rfloor+\beta\vee1}M^{-4\beta/d}\Big\}(\lfloor\beta\rfloor+1)^4n^{-2\beta/(d+2\beta)},\\
	\leq&c_2\mathcal{B}^5M^{-4\beta/d}(\lfloor\beta\rfloor+1)^{4}d^{2\lfloor\beta\rfloor+\beta\vee1}n^{-2\beta/(d+2\beta)}
{\color{black}(\log n)^{8}},
\end{align*}
for $2n \ge \text{Pdim}(\mathcal{F}_n)$, where $c_1,c_2>0$ are constants which do not depend on $n,\mathcal{B},B_0,\beta$ or $M$.
\end{corollary}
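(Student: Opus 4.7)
The plan is to obtain the statement as an immediate consequence of Theorem~\ref{thm2} by specializing its two free architectural parameters in the complementary way to Corollary~\ref{c1}: here I let the depth parameter $M$ play the role of a free constant and tie the width parameter $N$ to the sample size. Concretely, I would set $N=\lceil n^{d/2(d+2\beta)}\rceil$ and keep $M$ arbitrary. With this choice the width in Theorem~\ref{thm2}, namely $38(\lfloor\beta\rfloor+1)^2 d^{\lfloor\beta\rfloor+1} N\lceil \log_2(8N)\rceil$, coincides with the $\mathcal{W}$ stated in the corollary after absorbing $n^{d/2(d+2\beta)}$ into the ceiling, while the depth $21(\lfloor\beta\rfloor+1)^2 M\lceil\log_2(8M)\rceil$ is already in the required form.

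Next, I would use (\ref{size}) to bound the total size. Because $\mathcal{D}$ is constant in $n$ and the $n$-dependent piece of $\mathcal{W}$ grows like $n^{d/2(d+2\beta)}\log n$, the inequality $\mathcal{S}=O(\mathcal{W}^2\mathcal{D})$ yields $\mathcal{S}=O(n^{d/(d+2\beta)}(\log n)^2)$, matching what is asserted. It is also useful to check the hypothesis $2n\ge \text{Pdim}(\mathcal{F}_n)$: using the bound $\text{Pdim}(\mathcal{F}_n)=O(\mathcal{S}\mathcal{D}\log\mathcal{S})=O(n^{d/(d+2\beta)}(\log n)^3)$ invoked in Lemma~\ref{lemma1}, the pseudo-dimension is $o(n)$, so the side condition holds for all sufficiently large $n$.

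Finally, I would substitute these choices into the error bound of Theorem~\ref{thm2}. The approximation term $324 B_0^2(\lfloor\beta\rfloor+1)^4 d^{2\lfloor\beta\rfloor+\beta\vee 1}(NM)^{-4\beta/d}$ becomes $324 B_0^2(\lfloor\beta\rfloor+1)^4 d^{2\lfloor\beta\rfloor+\beta\vee 1}M^{-4\beta/d}\, n^{-2\beta/(d+2\beta)}$ after noting $N^{-4\beta/d}\asymp n^{-2\beta/(d+2\beta)}$. The stochastic term $C\mathcal{B}^2(\log n)^3\,\mathcal{S}\mathcal{D}\log\mathcal{S}/n$ simplifies, since $\mathcal{D}=O(1)$ and $\log\mathcal{S}=O(\log n)$, to an expression of the order $\mathcal{B}^2(\log n)^5\, n^{-2\beta/(d+2\beta)}$ (the bookkeeping is identical to that carried out for Corollary~\ref{c1}). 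Adding the two pieces and factoring $(\lfloor\beta\rfloor+1)^4 n^{-2\beta/(d+2\beta)}$ produces the first displayed inequality; the second inequality follows by dominating the sum by a single $c_2$-multiple of the approximation-type summand, absorbing the $M$-independent stochastic logarithms into the constant $c_2$. Every step is a direct calculation from Theorem~\ref{thm2} and the size relation, so there is no real obstacle here; the only care needed lies in tracking the logarithmic factors and verifying the pseudo-dimension side condition for the chosen width-dominated architecture.
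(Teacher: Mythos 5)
Your argument is correct and is precisely what the paper intends: the authors prove Corollary~\ref{c1} by fixing $N$ and choosing $M$ of order $n^{d/2(d+2\beta)}$, then state explicitly that Corollaries~\ref{c2} and~\ref{c3} follow ``similarly,'' and your proposal carries this out by swapping the roles of the two architectural parameters, setting $N\asymp n^{d/2(d+2\beta)}$ with $M$ held fixed and then specializing the bound of Theorem~\ref{thm2} together with the size relation~(\ref{size}). The bookkeeping of the logarithmic factors and the absorption of $M$-independent terms into the constant match the paper's own treatment of Corollary~\ref{c1}, so the route is the same in all essential respects.
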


	
By Corollaries \ref{c1} and \ref{c2}, the size of the \textit{deep with fixed width} network $S_{\text{DFW}}$
 and the size of the \textit{wide with fixed depth} network $S_{\text{WFD}}$ to achieve the same error rate are
\begin{equation}
\label{size1}
\mathcal{S}_{\text{DFW}}=O(n^{d/2(d+2\beta)}(\log n)) \ \text{ and } \
\mathcal{S}_{\text{WFD}}=O(n^{d/(d+2\beta)}(\log n)^{2}),
\end{equation}
respectively. So we have  the relationship $\mathcal{S}_{\text{DFW}}\approx \sqrt{\mathcal{S}_{\text{WFD}}}.$ The relative efficiency of these two networks as defined in
(\ref{re}) is
\begin{equation}
\label{re1}
\text{NRE}(\cN_{\text{DFW}}, \cN_{\text{WFD}}) = \frac{\log \mathcal{S}_{\text{WFD}}}{
\log \mathcal{S}_{\text{DFW}}}=2.
\end{equation}
Thus deep networks are twice as efficient as wide networks in terms of NRE.
In terms of sample complexity, (\ref{re1}) means that, if the sample size needed for a \textit{deep with fixed width} network to achieve the optimal error rate is $n$, then it is about $n^2$ for a \textit{wide with fixed depth} network.

Limitations of the approximation capabilities of shallow neural networks and
the advantages of deep neural networks have been well studied \citep{chui1996limitations,eldan2016power,telgarsky2016benefits}.
In \cite{telgarsky2016benefits}, it was shown that for any integer $k\geq1$ and dimension $d\geq1$, there exists a function computed by a ReLU neural network with $2k^3+8$ layers, $3k^2+12$ neurons and $4+d$ different parameters such that it cannot be approximated by networks activated by piecewise polynomial functions with no more than $k$ layers and $O(2^k)$ neurons. In addition, 
\cite{lu2017expressive} showed that
depth can be more effective than width for the expressiveness of ReLU networks.
Our calculation directly links the network structure with the sample complexity in the context of nonparametric regression.

	

{\color{black}
\begin{corollary}[Deep and wide networks]
\label{c3}
		Under model (\ref{model}), suppose that Assumptions  \ref{A1}-\ref{A2} hold, $\nu$ is absolutely continuous with respect to Lebesgue measure and $\mathcal{B}\geq\max\{1,B_0\}$. Then, for the function class of ReLU multilayer perceptrons
$\mathcal{F}_n
=\mathcal{F}_{\mathcal{D},\mathcal{W},\mathcal{U},\mathcal{S},\mathcal{B}}$		
with depth  $\mathcal{D}$, width $\mathcal{W}$ and size $\mathcal{S}$ given by
{\color{black}
\[\mathcal{W}=O(n^{d/{4(d+2\beta)}}\log_2(n)),
\mathcal{D}=O(n^{d/{4(d+2\beta)}}\log_2(n)),
\mathcal{S}=O(n^{3d/{4(d+2\beta)}}(\log n)^{4}),
\]
}
%
%
 the ERM $\hat{f}_n$ satisfies
\begin{align*}
	\mathbb{E} \Vert \hat{f}_n-f_0\Vert^2_{L^2(\nu)}\leq& \Big\{c_1\mathcal{B}^5{\color{black}(\log n)^{11}}+324B_0^2d^{2\lfloor\beta\rfloor+\beta\vee1}N^{-4\beta/d}\Big\}(\lfloor\beta\rfloor+1)^4n^{-2\beta/(d+2\beta)},\\
	\leq&c_2\mathcal{B}^5(\lfloor\beta\rfloor+1)^{4}d^{2\lfloor\beta\rfloor+\beta\vee1}n^{-2\beta/(d+2\beta)}
{\color{black}(\log n)^{11}},
\end{align*}
for $2n \ge \text{Pdim}(\mathcal{F}_n)$, where $c_1,c_2>0$ are constants which do not depend on $n,\mathcal{B},B_0$ or $\beta$.
\end{corollary}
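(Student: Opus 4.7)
The plan is to obtain Corollary \ref{c3} as a direct specialization of Theorem \ref{thm2}. The key is to choose the indices $N$ and $M$ in that theorem so that the resulting width and depth scale at the same rate in $n$, and so that the stochastic and approximation terms balance at the optimal rate $n^{-2\beta/(d+2\beta)}$. No new mathematical ideas are required beyond Theorem \ref{thm2} and the size relationship (\ref{size}).

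Concretely, set $a = d/[4(d+2\beta)]$ and take
\[
N = M = \bigl\lceil n^{a} \bigr\rceil.
\]
Since $\log_2(8N) = O(\log n)$, substituting into the width and depth formulas from Theorem \ref{thm2} gives
\[
\mathcal{W} = O\!\bigl(n^{a} \log_2 n\bigr), \qquad \mathcal{D} = O\!\bigl(n^{a} \log_2 n\bigr),
\]
where the polynomial-in-$d$ and polynomial-in-$\beta$ factors have been absorbed into the $O$-notation. The bound $\mathcal{S} = O(\mathcal{W}^{2}\mathcal{D})$ from (\ref{size}) then yields $\mathcal{S} = O(n^{3a}(\log n)^{3}) = O(n^{3d/[4(d+2\beta)]}(\log n)^{4})$, matching the description of the network parameters in the corollary.

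Next, I would insert these quantities into the bound of Theorem \ref{thm2}. For the approximation error,
\[
(NM)^{-4\beta/d} \;=\; n^{-8a\beta/d} \;=\; n^{-2\beta/(d+2\beta)},
\]
so that summand is already of the desired order, carrying the stated prefactor $324B_0^{2}(\lfloor\beta\rfloor+1)^{4} d^{2\lfloor\beta\rfloor+\beta\vee1}$. For the stochastic error, note $\log\mathcal{S} = O(\log n)$ and compute
\[
\mathcal{S}\mathcal{D}\log(\mathcal{S}) \;=\; O\!\bigl(n^{3a}(\log n)^{3}\bigr)\cdot O\!\bigl(n^{a}\log n\bigr)\cdot O(\log n) \;=\; O\!\bigl(n^{d/(d+2\beta)}(\log n)^{5}\bigr),
\]
so dividing by $n$ and multiplying by the $(\log n)^{3}$ prefactor already present in Theorem \ref{thm2} yields a stochastic-error bound of order $\mathcal{B}^{2}\,n^{-2\beta/(d+2\beta)}(\log n)^{8}$. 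Adding the two bounds and absorbing all remaining constants and polynomial-in-$d,\beta$ prefactors into $c_1,c_2$ produces the displayed inequality. The sample-size condition $2n \geq \text{Pdim}(\mathcal{F}_n)$ is inherited verbatim from Theorem \ref{thm2}.

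The entire argument is essentially careful logarithmic bookkeeping rather than new mathematics. The only step requiring attention is tracking the logarithmic factors arising from four different sources --- $\log_2(8N)$ and $\log_2(8M)$ in the width/depth formulas, $\log\mathcal{S}$ in the stochastic-error term, and the $(\log n)^{3}$ prefactor from Theorem \ref{thm2} --- so that they combine to precisely $(\log n)^{8}$ in the final bound. This bookkeeping, together with verifying that the $d$- and $\beta$-dependent prefactors in Theorem \ref{thm2} pass through unchanged, is the main (and really only) obstacle.
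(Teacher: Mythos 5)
Your proposal is correct and follows the paper's own route: the paper proves Corollary~\ref{c1} in detail by specializing Theorem~\ref{thm2} with a particular choice of $M$ (and leaving $N$ free), and explicitly remarks that Corollaries~\ref{c2} and~\ref{c3} are proved in the same way; taking $N=M=\lceil n^{d/4(d+2\beta)}\rceil$ is exactly the analogous specialization. One small note: your sharper count $\mathcal{S}=O(n^{3d/4(d+2\beta)}(\log n)^{3})$ is actually what makes $\mathcal{S}\mathcal{D}\log(\mathcal{S})(\log n)^{3}/n$ come out to $(\log n)^{8}n^{-2\beta/(d+2\beta)}$; starting from the corollary's looser $(\log n)^{4}$ version of $\mathcal{S}$ would give $(\log n)^{9}$ instead.
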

}

By Corollary \ref{c3}, the size $\mathcal{S}_{\text{DAW}}$ of the deep and wide network achieving the optimal error bound  is
\begin{equation}
\label{size2}
\mathcal{S}_{\text{DAW}} = O(n^{3d/{4(d+2\beta)}}(\log n)^{-8}).
\end{equation}
 Combining (\ref{size1}) and (\ref{size2})  and ignoring the $\log n$ factors,
 we have
$
\mathcal{S}_{\text{DFW}}^2 \approx \mathcal{S}_{\text{WFD}}\approx \mathcal{S}_{\text{DAW}}^{4/3}.
$
Therefore, the relative efficiencies are
\[
 \text{NRE}(\cN_{\text{DFW}}, \cN_{\text{DAW}} )
=\frac{3/4}{1/2} = \frac{3}{2} \ \text{ and } \
\text{NRE}( \cN_{\text{WFD}}, \cN_{\text{DAW}})
=\frac{3/4}{1} = \frac{3}{4}.
\]
The relative sample complexity of a \textit{deep with fixed width} network versus a \textit{deep and wide} network  is $n:n^{3/2}$; and the relative sample complexity of a \textit{wide with fixed depth} network versus a \textit{deep and wide} network is $n: n^{3/4}.$

We note that the choices of the network parameters are not unique to achieve the optimal convergence rate.
For deep and wide networks, there are multiple choices that attain the optimal rate.
For example, the following two different specifications of the network parameters achieve the
same convergence rate.
\begin{align*}
	\mathcal{D}&=21(\lfloor\beta\rfloor+1)^2 \lceil n^{d/2(d+2\beta)}\log_2(8n^{d/2(d+2\beta)})\rceil, \\
	\mathcal{W}&=38(\lfloor\beta\rfloor+1)^2d^{\lfloor\beta\rfloor+1}(\log n)\lceil\log_2(8(\log n))\rceil, \
	\mathcal{S}= O(n^{d/2(d+2\beta)}(\log n)^{4}),
\end{align*}
 and
 \begin{align*}
 	\mathcal{D}&=21(\lfloor\beta\rfloor+1)^2 \lceil (\log n)\log_2(8(\log n))\rceil, \\
 	\mathcal{W}&= 38(\lfloor\beta\rfloor+1)^2d^{\lfloor\beta\rfloor+1}\lceil n^{d/2(d+2\beta)}\log_2(8n^{d/2(d+2\beta)})\rceil, \
 	\mathcal{S}= O(n^{d/(d+2\beta)}(\log n)^{4}),
 \end{align*}

The above calculations suggest that there is no unique optimal selection of network parameters for achieving the optimal rate of convergence in nonparametric regression. Instead, we should consider the efficient design of the network structure for achieving the optimal convergence rate with the minimal  network size.

\subsection{Efficient design of rectangle networks}
\label{design}
We now discuss the efficient design of \textit{rectangle networks}, i.e.,  networks with equal width for each hidden layer. For such networks with a regular shape, we have an exact relationship between the size of the network and the depth and the width:
 \begin{equation}
 \label{size-equal}
 \mathcal{S}=\mathcal{W}(d+1)+(\mathcal{W}^2+\mathcal{W})(\mathcal{D}-1)+
\mathcal{W}+1=O(\mathcal{W}^2\mathcal{D}).
\end{equation}
Based on this relationship
and Theorem \ref{thm2}, we can determine the depth and the width of the network to achieve the
optimal error with the minimal size.

Specifically, to achieve the {\color{black}optimal rate with respect to the sample size $n$} with a minimal network size, we can set
\begin{align*}
\mathcal{W}&=114(\lfloor\beta\rfloor+1)^2d^{\lfloor\beta\rfloor+1}, \
\mathcal{D}=21(\lfloor\beta\rfloor+1)^2\lceil n^{d/2(d+2\beta)}\log_2(8n^{d/2(d+2\beta)})\rceil,\\
 \mathcal{S}&=O(\mathcal{W}^2\mathcal{D})=O((\lfloor\beta\rfloor+1)^6d^{2\lfloor\beta\rfloor+2} \lceil n^{d/{2(d+2\beta)}}(\log_2 n)\rceil).
 \end{align*}
 It is interesting to note that the most efficient network's shape is a fixed-width rectangle; its width is a multiple of $d^{\lfloor\beta\rfloor+1}$, a polynomial of dimension $d$, but does not depend on the sample size $n$.
 Its depth  $\mathcal{D}=21(\lfloor\beta\rfloor+1)^2\lceil n^{d/2(d+2\beta)}\log_2(8n^{d/2(d+2\beta)})\rceil\approx O(\sqrt{n})$ for $d\gg \beta.$

 { The calculation in this subsection suggests that, in designing neural networks for high-dimensional nonparametric regression with a large $n$ and $d\gg \beta,$ we may consider setting the width of the network to be of the order
$O(d^{\lfloor\beta\rfloor+1})$
 and the depth to be proportional to $\sqrt{n}$, so as to achieve the optimal convergence rate with minimal number of network parameters.}
Qualitatively, this suggests that the depth of the network should be roughly proportional to the square root of  sample size and the width of the network should roughly be proportional to a polynomial order of the data dimension.
However,  we note that the design of a network architecture  is very much  problem specific and requires careful data-driven tuning in practice. { Also, we did not consider the optimization aspect  where deeper neural networks can be more challenging to optimize.
In general, gradient descent and stochastic gradient decent will find a reasonable solution for the optimization problem raised in deep leaning tasks with overparameterized deep networks, see for example \cite{allen2019convergence, du2019gradient} and \cite{nguyen2020rigorous}.
Also, the results here are based on the use of feedforward neural networks in the
context of nonparametric regression. In other types of problems such as image classification using
convolutional neural networks, the calculation here may not apply and new derivation is needed.

\section{Circumventing the curse of dimensionality}
\label{sec4}

	For many modern statistical and machine learning tasks, the dimension $d$ of the input data can be large,  which results in an extremely slow rate of convergence even if the sample size is big.  This problem is known as the curse of dimensionality. A promising way to mitigate the curse of dimensionality is to impose additional conditions on the data distribution and the target function $f_0$. In Lemmas \ref{lemma0} and \ref{lemma1},  the approximation error $\inf_{f\in\mathcal{F}_n}\Vert f-f_0\Vert^2_{L^2(\nu)}$ is defined with respect to the probability measure $\nu$, this provides us a chance to improve the rate.
	Although the domain of  $f_0$ is high dimensional, when the support of $X$ is concentrated on some neighborhood of a low-dimensional manifold, the upper bound of the approximation error can be much improved in terms of the exponent of the convergence rate \citep{baraniuk2009random,shen2019deep}.
	There have been growing evidence and examples indicating that  high-dimensional data tend to have low-dimensional latent structures in many applications such as image processing, video analysis, natural language processing \citep{belkin2003laplacian,hoffmann2009local}.
	
		\citet{goodfellow2016deep} argued that the approximately low-dimensional manifold assumption is
generally correct for images, supported by two observations. First, natural images are locally connected, with each image surrounded by other highly similar images reachable through image transformations (e.g., contrast, brightness). Second, natural images seem to lie on an approximately low-dimensional
structure, as the probability distribution of images is highly concentrated; uniformly sampled pixels can hardly assemble a meaningful image.  Furthermore,
results from
many numerical experiments
strongly support the low-dimensional manifold hypothesis for many image datasets \citep{roweis2000nonlinear,tenenbaum2000global,brand2002charting,fefferman2016testing}.
		For example,
		for the well-known benchmark image datasets MNIST \citep{mnist2010},
		whose ambient dimension $d=28\times 28=784$,  CIFAR-10, whose ambient dimension  $d=32\times 32 \times 3=1024$ \citep{cifar10}, and ImageNet \citep{deng2009imagenet}, whose ambient dimension $d=224\times 224\times 3=150,528$, the estimated intrinsic dimensions of these three datasets are between 9 and 43 \citep{pope2020intrinsic,recanatesi2019dimensionality}.
Therefore, it is important
to study the properties deep nonparametric regression under the assumption that the intrinsic dimension is lower than its ambient dimension.

In this section, we establish non-asymptotic error bounds for the ERM $\hat{f}_n$ under three different cases of low-dimensional support of $X$: (a) an approximate low-dimensional manifold; (b) an  exact low-dimension manifold;  and (c) a low Minkowski dimension set. Case (a) is a realistic assumption. Case (b) is of theoretical interest, since in this case we can show that the convergence rate is determined by the exact dimension of the manifold. Case (a) is more difficult than (b) in the sense that the convergence rate under (a) is slower than that under (b). The Minkowski dimension is a more general notion than the topological dimension of a manifold. In particular, case (c) includes (b) as a special case, but does not include (a).  Since the Minkowski dimension only depends on the metric, it can also be used to measure the dimensionality of highly non-regular sets \citep{falconer2004fractal}.

\subsection{Approximate low-dimensional manifold assumption}

The assumption that high-dimensional data tend to lie in the vicinity of a low-dimensional manifold is the basis of manifold learning
\citep{fefferman2016testing}. It is also one of the basic assumptions in semi-supervised learning \citep{belkin2004semi}.
In applications, one rarely observes data that are located on an exact manifold. It is more reasonable to assume that they are concentrated on a neighborhood of a low-dimensional manifold.
For instance, the empirical studies
by
\cite{carlsson2009topology}
suggest that image data tend to have low intrinsic dimensions
and  be supported on approximate lower-dimensional manifolds.
We formally state the approximate low-dimensional manifold support assumption below.

\begin{assumption}
	\label{A3}
	The predictor $X$ is supported on $\mathcal{M}_\rho$, a $\rho$-neighborhood of $\mathcal{M}\subset[0,1]^d$, where $\mathcal{M}$ is a compact $d_\mathcal{M}$-dimensional Riemannian submanifold \citep{lee2006riemannian} and
	$$\mathcal{M}_\rho=\{x\in[0,1]^d: \inf\{\Vert x-y\Vert_2: y\in\mathcal{M}\}\leq \rho\}, \ \rho \in (0, 1).$$
\end{assumption}


The following theorem gives excess risk bounds under Assumption \ref{A3} and other appropriate conditions.

\begin{theorem}[Non-asymptotic error bound] \label{thm3}
	Under model (\ref{model}), suppose that Assumptions \ref{A1}-\ref{A3} hold, the probability measure $\nu$ of $X$ is absolutely continuous with respect to the Lebesgue measure and $\mathcal{B}\geq\max\{1,B_0\}$. Then for any $N, M\in\mathbb{N}^+$, the function class of ReLU multi-layer perceptrons $\mathcal{F}_n=\mathcal{F}_{\mathcal{D},\mathcal{W},\mathcal{U},\mathcal{S},\mathcal{B}}$ with width $\mathcal{W}=38(\lfloor\beta\rfloor+1)^2d_\delta^{\lfloor\beta\rfloor+1}N\lceil\log_2(8N)\rceil$ and depth $\mathcal{D}=21(\lfloor\beta\rfloor+1)^2M\lceil\log_2(8M)\rceil$ , the prediction error of the empirical risk minimizer $\hat{f}_n$	satisfies
	$$\mathbb{E} \Vert \hat{f}_n-f_0\Vert^2_{L^2(\nu)}\leq C_1\mathcal{B}^5\frac{\mathcal{S}\mathcal{D}\log(\mathcal{S})
		(\log n)^5}{n}+\frac{(36+C_2)^2B_0^2}
	{(1-\delta)^{2\beta}}(\lfloor\beta\rfloor+1)^4dd_\delta^{3\lfloor\beta\rfloor}(NM)^{-4\beta/d_\delta}$$
	for $n \ge \text{Pdim}(\mathcal{F}_n)/2$ and $\rho\leq C_2(NM)^{-2\beta/d_{\delta}}(s+1)^2d^{1/2}d_\delta^{3s/2}(\sqrt{{d}/{d_\delta}}+1-\delta)^{-1}(1-\delta)^{1-\beta}$, where $d_\delta=O(d_\mathcal{M}{\log(d/\delta)}/{\delta^2})$ is an integer  such that $d_\mathcal{M}$$\leq d_\delta<d$ for any $\delta\in(0,1)$, and $C_1,C_2>0$ are constants that do not depend on $n,\mathcal{B},\mathcal{S},\mathcal{D},B_0,\beta,\rho,\delta,N$ or $M$.

\end{theorem}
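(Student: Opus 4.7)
The plan is to combine the basic decomposition provided by Lemma \ref{lemma1} with an approximation argument that exploits the low-dimensional manifold structure to replace the ambient dimension $d$ by the effective dimension $d_\delta$ in the approximation error rate. Applying Lemma \ref{lemma1} immediately yields the stochastic error term $C_1\mathcal{B}^2\mathcal{S}\mathcal{D}\log(\mathcal{S})(\log n)^3/n$; all subsequent work is devoted to bounding $\inf_{f\in\mathcal{F}_n}\Vert f-f_0\Vert^2_{L^2(\nu)}$ under Assumption \ref{A3}.

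The first step is to invoke a manifold Johnson--Lindenstrauss-type embedding of the type established by \cite{baraniuk2009random}: for any $\delta\in(0,1)$ there exists a linear map $A:\mathbb{R}^d\to\mathbb{R}^{d_\delta}$ with $d_\delta=O(d_{\mathcal{M}}\log(d/\delta)/\delta^2)$ such that the rescaled map $B=\sqrt{d/d_\delta}\,A$ satisfies $(1-\delta)\Vert x-y\Vert_2\le\Vert B(x-y)\Vert_2\le(1+\delta)\Vert x-y\Vert_2$ for all $x,y\in\mathcal{M}$. This near-isometry allows us to define a low-dimensional target $\tilde f_0:B(\mathcal{M})\to\mathbb{R}$ by $\tilde f_0(z)=f_0(B^{-1}z)$ (well-defined by near-isometry) and then extend it, via a Whitney-type extension or by composing with a smooth projection onto $B(\mathcal{M})$, to a function defined on a cube of $\mathbb{R}^{d_\delta}$ containing $B(\mathcal{M}_\rho)$ while preserving $\beta$-H\"older smoothness with a seminorm of order $B_0(1-\delta)^{-\beta}$. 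The cube can then be rescaled to $[0,1]^{d_\delta}$.

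The second step is to apply Theorem \ref{thm_apx} to $\tilde f_0$ in the reduced ambient dimension $d_\delta$: this produces a ReLU network $\tilde\phi$ of width $38(\lfloor\beta\rfloor+1)^2d_\delta^{\lfloor\beta\rfloor+1}N\lceil\log_2(8N)\rceil$ and depth $21(\lfloor\beta\rfloor+1)^2M\lceil\log_2(8M)\rceil$ whose uniform approximation error on $[0,1]^{d_\delta}$ is of order $B_0(1-\delta)^{-\beta}(\lfloor\beta\rfloor+1)^2 d_\delta^{\lfloor\beta\rfloor+(\beta\vee1)/2}(NM)^{-2\beta/d_\delta}$, after handling the exceptional set $\Omega$ either by a measure-zero-boundary argument (since $\nu$ is absolutely continuous) or by invoking Corollary \ref{thm_apx1}. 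Composing with the linear map $B$, which can be absorbed into the first affine layer without affecting depth and increases the width only through the factor relating $d$ to $d_\delta$, yields $\phi(x)=\tilde\phi(Bx)\in\mathcal{F}_n$, and for $x\in\mathcal{M}$ the pointwise error $\vert f_0(x)-\phi(x)\vert$ inherits the displayed on-manifold bound.

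The third step handles the off-manifold piece $\mathcal{M}_\rho\setminus\mathcal{M}$. Writing any $x\in\mathcal{M}_\rho$ as $x=y+e$ with $y\in\mathcal{M}$ and $\Vert e\Vert_2\le\rho$, a triangle inequality gives $\vert f_0(x)-\phi(x)\vert\le\vert f_0(y)-\phi(y)\vert+\vert f_0(x)-f_0(y)\vert+\vert\phi(x)-\phi(y)\vert$, where the first term is bounded on-manifold as above, the second term is controlled by H\"older continuity of $f_0$ and is of order $B_0\rho^{\beta\wedge1}$, and the third term is controlled by a Lipschitz estimate on $\phi=\tilde\phi\circ B$ that picks up a factor of $(\sqrt{d/d_\delta}+1-\delta)$ from the operator norm of $B$ and polynomial factors in $d_\delta$ coming from derivative bounds on the Taylor-type network $\tilde\phi$ used in the proof of Theorem \ref{thm_apx}. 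The hypothesis $\rho\le C_2(NM)^{-2\beta/d_\delta}(s+1)^2d^{1/2}d_\delta^{3s/2}(\sqrt{d/d_\delta}+1-\delta)^{-1}(1-\delta)^{1-\beta}$ is calibrated precisely so that the perturbation terms match the on-manifold approximation rate, after which squaring and integrating against $\nu$ (supported on $\mathcal{M}_\rho$) delivers the claimed $L^2(\nu)$ approximation bound and, combined with the stochastic term, the stated prediction error. The principal obstacle is the careful construction of the low-dimensional H\"older surrogate $\tilde f_0$ with explicitly controlled seminorms in $d_\delta$ and $(1-\delta)$, together with tracking constants through the Johnson--Lindenstrauss projection, the extension, and the Lipschitz perturbation so as to land exactly on the displayed prefactor $(36+C_2)^2B_0^2(1-\delta)^{-2\beta}(\lfloor\beta\rfloor+1)^4 d\,d_\delta^{3\lfloor\beta\rfloor}$.
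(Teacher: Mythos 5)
There is a genuine gap in the off-manifold step. You decompose, for $x = y + e$ with $y\in\mathcal{M}$ and $\|e\|_2\le\rho$,
\begin{equation*}
\vert f_0(x)-\phi(x)\vert\le\vert f_0(y)-\phi(y)\vert+\vert f_0(x)-f_0(y)\vert+\vert\phi(x)-\phi(y)\vert,
\end{equation*}
and propose to control the last term by a Lipschitz estimate on the network $\phi=\tilde\phi\circ B$, claiming that "derivative bounds on the Taylor-type network $\tilde\phi$" give polynomial factors in $d_\delta$. This step fails: the network produced by Theorem \ref{thm_apx} is not Lipschitz with a controlled constant. Its construction involves the step-discretizer $\psi_1$ (Lemma \ref{lemmab1}) whose Lipschitz constant is of order $MK^{-2}\delta^{-2}$, and the coefficient-fitting network $\phi_\alpha$ built by bit extraction (Lemma \ref{lemmab2}), both of which can have arbitrarily large local slopes compared to $(NM)^{-2\beta/d_\delta}$. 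So $\vert\phi(x)-\phi(y)\vert$ cannot be bounded at the rate you need, and the calibration of $\rho$ to offset a polynomial-in-$d_\delta$ Lipschitz factor does not go through.

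The paper's proof avoids this entirely by choosing a different triangle split. After constructing the Whitney extension $\tilde F_0\in\mathcal{H}^\beta(E, B_0/(1-\delta)^\beta)$ and $f^*_n=\tilde f_n\circ A$, it writes, for $x\in\mathcal{M}_\rho$ and any $\tilde x\in\mathcal{M}$ with $\|x-\tilde x\|_2\le\rho$,
\begin{equation*}
\vert f^*_n(x)-f_0(x)\vert\le\vert\tilde f_n(Ax)-\tilde F_0(Ax)\vert+\vert\tilde F_0(Ax)-\tilde F_0(A\tilde x)\vert+\vert\tilde F_0(A\tilde x)-f_0(x)\vert.
\end{equation*}
The first term is the uniform approximation error of Theorem \ref{thm_apx} evaluated directly at $z=Ax\in E$ (using that $A\mathcal{M}_\rho\subseteq E$, with the exceptional set handled by absolute continuity of $\nu$). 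The second term uses the Lipschitz constant of the \emph{smooth extension} $\tilde F_0$, which is $B_0/(1-\delta)$ by construction, together with the operator norm $\|A\|=\sqrt{d/d_\delta}$. The third term equals $\vert f_0(\tilde x)-f_0(x)\vert\le B_0\rho$ by the smoothness of $f_0$. No Lipschitz estimate on the network ever appears, which is what makes the constant $\rho\le C_2(NM)^{-2\beta/d_\delta}(s+1)^2 d^{1/2} d_\delta^{3s/2}(\sqrt{d/d_\delta}+1-\delta)^{-1}(1-\delta)^{1-\beta}$ exactly calibrate the perturbation terms to the main rate. To repair your argument you should split along $\tilde F_0(Ax)$ and $\tilde F_0(A\tilde x)$ rather than $\phi(y)$ and $f_0(y)$, so that the off-manifold perturbation is carried by the smooth extension and by $f_0$ itself, never by the network.
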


As in Subsection \ref{efficiency}, to achieve the optimal convergence rate with a minimal network size,  we can set  $\mathcal{F}_n=\mathcal{F}_{\mathcal{D},\mathcal{W},\mathcal{U},\mathcal{S},\mathcal{B}}$ to consist of fixed-width networks with
$ \mathcal{W}=114(\lfloor\beta\rfloor+1)^2d_\delta^{\lfloor\beta\rfloor+1}, \
	\mathcal{D}=21(\lfloor\beta\rfloor+1)^2\lceil n^{d_\delta/2(d_\delta+2\beta)}\log_2(8n^{d_\delta/2(d_\delta+2\beta)})\rceil,\ \mathcal{S}=O(\mathcal{W}^2\mathcal{D})=O((\lfloor\beta\rfloor+1)^6d_\delta^{2\lfloor\beta\rfloor+2} \lceil n^{d_\delta/{2(d_\delta+2\beta)}}(\log_2 n)\rceil).
$
Then the prediction error of $\hat{f}_n$ in Theorem \ref{thm3} becomes
\begin{align}
	\label{cursea}
	\mathbb{E} \Vert \hat{f}_n-f_0\Vert^2_{L^2(\nu)}&\leq C_3(1-\delta)^{-2\beta}\mathcal{B}^5dd_\delta^{3\lfloor\beta\rfloor+3}
	(\lfloor\beta\rfloor+1)^9n^{-2\beta/(d_\delta+2\beta)}
	{\color{black}(\log n)^8},
\end{align}
where $C_3>0$ is a constant not depending on $n,d,d_\delta,\mathcal{B},\mathcal{S},\mathcal{D},B_0,\delta$ or $\beta$.

We can also consider the relative efficiencies of networks with different shapes in a way completely
similar to those in Section \ref{efficiency}.

Theorem \ref{thm3} shows that nonparametric regression using deep neural networks can alleviate the curse of dimensionality under an approximate
manifold assumption.
This is different from the hierarchical structure assumption on $f_0$  \citep{bauer2019deep, schmidt2020nonparametric}.
We note that under the \textit{approximate} manifold assumption, the dimension of the support of $X$ is still $d$ and only shrinks to $d_{\mathcal{M}}$. The convergence rate  in (\ref{cursea})
depends on $d_\delta=O(d_\mathcal{M}\log(d))$, which is smaller than
$d$ but still greater than $d_{\mathcal{M}}$ with an extra $\log (d)$ factor. Intuitively, this $\log (d)$ factor is due to the fact that the dimension of the \textit{approximate} manifold is still $d.$
It is not clear if it is possible to remove the effect of $d$ on the convergence rate under the approximate low-dimensional manifold assumption. This is a technically challenging problem and deserves further study in the future.


\subsection{Exact low-dimensional manifold assumption}
{\color{black}
	Under the exact manifold support assumption, we show that the $\log (d)$ factor in (\ref{cursea}) can be removed. We establish error bounds that
	achieve the minimax optimal convergence rate with a prefactor only depending linearly on the ambient dimension $d$.
	
	\begin{assumption}
		\label{A4}
		The predictor $X$ is supported on $\mathcal{M}\subset[0,1]^d$, where a $\mathcal{M}$ is a compact $d_{\mathcal M}$-dimensional Riemannian manifold isometrically embedded in $\mathbb{R}^d$ with condition number $(1/\tau)$ and area of surface $S_\mathcal{M}.$
	\end{assumption}

For a compact Riemannian manifold $\mathcal{M}$, the condition number $(1/\tau)$ controls both local properties of the manifold (such as curvature) and global properties (such as self-avoidance) \citep{baraniuk2009random}. Some authors refers to $\tau$ as the geometric concept ``reach" \citep{federer1959curvature,aamari2019estimating}, which is the largest number having the following property: The open normal bundle about $\mathcal{M}$ of radius $r$ is embedded in $\mathbb{R}^d$ for all $r<\tau$ \citep{niyogi2008finding,baraniuk2009random}. Intuitively, at each point $x\in\mathcal{M}$, the radius of the osculating circle is no less than $\tau$, where a large $\tau$ prevents the manifold $\mathcal{M}$ to be curvy.
Condition number $(1/\tau)$  or the reach $\tau$ here influences the complexity of function approximation on $\mathcal{M}$ using neural networks.
	
	The surface area
$S_\mathcal{M}$ of a manifold $\mathcal{M}$ is defined as the integral of 1 over the manifold with respect to the Riemannian volume element (Chapter 10, \citet{book:206368}; Chapter 8, \citet{lee2006riemannian}; and Chapter 5, \citet{hubbard2015vector}). For example,
for the
surface area  of
a $d$-dimensional unit ball, this definition gives the well-know result $2\pi^{d/2}/\Gamma(d/2),$ where $\Gamma$ is the gamma function.
	For function approximation on $\mathcal{M}$ by neural networks, we approximate the function on a finite number of charts which cover  $\mathcal{M}$. Larger surface area $S_\mathcal{M}$ only leads to a larger number of charts, which further leads to a wider (linearly in $S_\mathcal{M}$) neural network width and larger prefactor of the approximation error.


	
	\begin{theorem}[Non-asymptotic error bound]\label{thm5}
		Under model (\ref{model}), suppose that Assumptions \ref{A1}-\ref{A2} and \ref{A4} hold, and $\mathcal{B}\geq\max\{1,B_0\}$. Then for any $N, M\in\mathbb{N}^+$, the function class of ReLU multi-layer perceptrons $\mathcal{F}_n=\mathcal{F}_{\mathcal{D},\mathcal{W},\mathcal{U},\mathcal{S},\mathcal{B}}$ with $\mathcal{W}=266(\lfloor\beta\rfloor+1)^2\lceil S_\mathcal{M}(6/\tau)^{d_{\mathcal M}}\rceil(d_{\mathcal M})^{\lfloor\beta\rfloor+2}N\lceil\log_2(8N)\rceil$ and depth $\mathcal{D}=21(\lfloor\beta\rfloor+1)^2M\lceil \log_2(8M)\rceil+2d_{\mathcal M}+2$, the prediction error of the empirical risk minimizer $\hat{f}_n$ satisfies
		\begin{align*}
			\mathbb{E} \Vert \hat{f}_n-f_0\Vert^2_{L^2(\nu)}\leq C_1\mathcal{B}^5\frac{\mathcal{S}\mathcal{D}\log(\mathcal{S})
				(\log n)^5}{n}+C_2{B_0^2}(\lfloor\beta\rfloor+1)^4d(d_{\mathcal M})^{3\lfloor\beta\rfloor+1}(NM)^{-4\beta/d_{\mathcal M}}
		\end{align*}
		for $n \ge \text{Pdim}(\mathcal{F}_n)/2,$ where $C_2>0$ is a constant independent of
		$n,d,d_{\mathcal M},\mathcal{B},\mathcal{S},\mathcal{D},N,M,\beta,B_0,\tau$ and $S_\mathcal{M}$.
		Furthermore, if we set $\mathcal{F}_n=\mathcal{F}_{\mathcal{D},\mathcal{W},\mathcal{U},\mathcal{S},\mathcal{B}}$ to consist of fixed-width networks with
		\begin{eqnarray*}
			\mathcal{W}&=&798(\lfloor\beta\rfloor+1)^2\lceil S_\mathcal{M}(6/\tau)^{d_{\mathcal M}}\rceil(d_{\mathcal M})^{\lfloor\beta\rfloor+2},\\
			\mathcal{D}&=&21(\lfloor\beta\rfloor+1)^2\lceil n^{d_{\mathcal M}/2(d_{\mathcal M}+2\beta)}\log_2(8n^{d_{\mathcal M}/2(d_{\mathcal M}+2\beta)})\rceil+2d_{\mathcal M}+2,\\
			\mathcal{S}&=&O( (\lfloor\beta\rfloor+1)^6d(6/\tau)^{2d_{\mathcal M}}(d_{\mathcal M})^{2\lfloor\beta\rfloor+5} n^{d_{\mathcal M}/{2(d_{\mathcal M}+2\beta)}}\log_2(n)),
		\end{eqnarray*}
		the prediction error of $\hat{f}_n$ satisfies
		\begin{align*}
			\mathbb{E} \Vert \hat{f}_n-f_0\Vert^2_{L^2(\nu)}&\leq C_3\mathcal{B}^5(\lfloor\beta\rfloor+1)^9(6/\tau)^{2d_{\mathcal M}}(d_{\mathcal M})^{3\lfloor\beta\rfloor+6}d(\log n)^8n^{-2\beta/(d_{\mathcal M}+2\beta)},
		\end{align*}
		where $C_3>0$ is a constant independent of
		$n,d,d_{\mathcal M},\mathcal{B},B_0,\beta,\tau$ and $S_\mathcal{M}$.
	\end{theorem}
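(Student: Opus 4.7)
The plan is to reduce Theorem \ref{thm5} to the Euclidean approximation result of Theorem \ref{thm_apx} (and its $L^\infty$ variant Corollary \ref{thm_apx1}) by exploiting the local flatness of the embedded manifold, then to combine this with the stochastic-error bound from Lemma \ref{lemma1} and to optimize in $M,N$. As in Section \ref{sec3}, Lemma \ref{lemma0} together with Lemma \ref{lemma1} gives
\[
\mathbb{E}\Vert\hat{f}_n-f_0\Vert_{L^2(\nu)}^2 \le C_0\mathcal{B}^2\,\frac{\mathcal{S}\mathcal{D}\log\mathcal{S}\,(\log n)^3}{n}+2\inf_{f\in\mathcal{F}_n}\Vert f-f_0\Vert_{L^2(\nu)}^2,
\]
so the whole issue is to bound the approximation error on $\mathcal{M}$ by a $d_{\mathcal M}$-rate quantity whose prefactor grows linearly in the ambient dimension $d$. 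The stochastic term is then handled exactly as before, and the claimed choice of $\mathcal{W},\mathcal{D},\mathcal{S}$ is obtained by balancing the two contributions in $N,M$, using $\mathcal{S}=O(\mathcal{W}^2\mathcal{D})$.

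For the approximation step I would proceed chart-by-chart. Since $\mathcal{M}$ is a compact $d_{\mathcal M}$-dimensional submanifold with reach $\tau$ and area $S_{\mathcal M}$, a standard volume/packing argument on sets of positive reach gives a covering of $\mathcal{M}$ by $C_{\mathcal M}:=\lceil S_{\mathcal M}(6/\tau)^{d_{\mathcal M}}\rceil$ Euclidean balls $B_j$ of radius $\tau/4$ centered at points of $\mathcal{M}$. On each such ball the orthogonal projection $\pi_j:\mathbb{R}^d\to T_{x_j}\mathcal{M}\cong\mathbb{R}^{d_{\mathcal M}}$ is a bi-Lipschitz parametrization of $\mathcal{M}\cap B_j$, with Lipschitz constants bounded in terms of $\tau$ (this is the reach-based ``flatness'' estimate). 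Consequently the pullback $g_j:=f_0\circ\pi_j^{-1}:[0,1]^{d_{\mathcal M}}\to\mathbb{R}$ lies in a Hölder class $\mathcal{H}^\beta$ with a constant still controlled by $B_0$ and a factor polynomial in $d_{\mathcal M}$. Applying Corollary \ref{thm_apx1} in the intrinsic dimension $d_{\mathcal M}$ produces a ReLU network $\phi_j$ of width $O((\lfloor\beta\rfloor+1)^2 3^{d_{\mathcal M}}d_{\mathcal M}^{\lfloor\beta\rfloor+1}N\log N)$ and depth $O((\lfloor\beta\rfloor+1)^2 M\log M)+2d_{\mathcal M}$ with
\[
\sup_{x\in \mathcal{M}\cap B_j}\bigl|f_0(x)-\phi_j(\pi_j(x))\bigr|\le c\,B_0(\lfloor\beta\rfloor+1)^2 d_{\mathcal M}^{\lfloor\beta\rfloor+(\beta\vee1)/2}(NM)^{-2\beta/d_{\mathcal M}};
\]
note that the composition with the linear projection $\pi_j$ only enlarges the network width by an additive $O(d)$ in the first layer, which is the origin of the single linear factor $d$ in the final prefactor.

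To assemble the global network I would use a ReLU-computable partition of unity $\{\chi_j\}_{j=1}^{C_{\mathcal M}}$ subordinate to $\{B_j\}$: each $\chi_j$ is a trapezoidal bump in the squared distance $\Vert x-x_j\Vert_2^2$, which is exactly representable by a shallow ReLU subnetwork of constant depth and width $O(d)$, and the (approximate) normalization $\sum_j\chi_j\equiv 1$ on $\mathcal{M}$ can be realized by a single additional ReLU block via the product network $\phi_\times$ used in the proof of Theorem \ref{thm_apx}. Stacking the $C_{\mathcal M}$ local branches in parallel and summing $\sum_j \phi_\times(\chi_j(x),\phi_j(\pi_j(x)))$ yields a network of width $O(C_{\mathcal M}\cdot (\lfloor\beta\rfloor+1)^2 d_{\mathcal M}^{\lfloor\beta\rfloor+2}N\log N)$ and depth $O((\lfloor\beta\rfloor+1)^2M\log M)+2d_{\mathcal M}+2$ — precisely the architecture prescribed in the theorem — whose $L^\infty(\mathcal{M})$ error inherits the per-chart bound up to the constant factor $d_{\mathcal M}^{1/2}$ absorbed into $d_{\mathcal M}^{3\lfloor\beta\rfloor+1}$. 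Because $\nu$ is supported on $\mathcal{M}$, the $L^2(\nu)$ approximation error is controlled by the same $L^\infty(\mathcal{M})$ bound, and plugging into the basic inequality yields the first displayed bound; substituting the stated choice of $N,M$ (with $N$ absorbed into $\mathcal{W}$ and $M\sim n^{d_{\mathcal M}/2(d_{\mathcal M}+2\beta)}$) and using $\mathcal{S}=O(\mathcal{W}^2\mathcal{D})$ gives the optimized rate $n^{-2\beta/(d_{\mathcal M}+2\beta)}$ with the announced prefactor.

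The main obstacle is the manifold geometry in the approximation step, not the stochastic bound: one must argue that (a) the covering number of $\mathcal{M}$ at scale $\tau$ is genuinely of order $S_{\mathcal M}(6/\tau)^{d_{\mathcal M}}$ rather than growing with $d$, and (b) the projections $\pi_j$ preserve the Hölder class of $f_0$ with constants independent of $d$. Both facts rely on the reach/condition-number hypothesis and require careful bookkeeping of Lipschitz constants, but they are standard in manifold-based approximation. Once this local-to-global construction is in hand, the rest — partition of unity, error aggregation, and the trade-off between $N,M$ — is essentially the same calculation as in the proofs of Theorems \ref{thm2} and \ref{thm3}.
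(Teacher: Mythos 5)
Your overall strategy --- cover $\mathcal{M}$ by $O(S_{\mathcal M}(6/\tau)^{d_{\mathcal M}})$ balls, parametrize each chart by the orthogonal projection onto the tangent space, approximate the pulled-back $\beta$-H\"older function in dimension $d_{\mathcal M}$ via Corollary~\ref{thm_apx1}, and then plug into Lemma~\ref{lemma1} and balance in $N,M$ --- matches the paper's architecture for Theorem~\ref{thm5}. But you make a genuinely different choice at the assembly step, and that choice introduces concrete problems. The paper applies the partition of unity to the \emph{target}: it writes $f_0=\sum_i f_0\rho_i$ and approximates each localized piece $f_0\rho_i\circ\phi_i^{-1}$ separately, so the combining network is a plain sum $\sum_i g_i\circ\phi_i$ with no multiplication nodes and no bump functions. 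You instead propose to implement the partition of unity \emph{inside} the network, forming $\sum_j\phi_\times(\chi_j(x),\phi_j(\pi_j(x)))$ with ReLU-computable cutoffs $\chi_j$. In principle this is a legitimate alternative (and arguably cleaner, since it explicitly suppresses contributions away from each chart), but as written there are two gaps.

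First, your $\chi_j$ are taken as trapezoidal bumps in the squared Euclidean distance $\Vert x-x_j\Vert_2^2$, which you claim are ``exactly representable by a shallow ReLU subnetwork of constant depth and width $O(d)$.'' That is false: a quadratic in $x$ is not piecewise linear and cannot be exactly computed by a ReLU network of any size. You would have to either approximate the quadratic (introducing a new error term you never account for) or switch to an $\ell^1$- or $\ell^\infty$-distance bump, which \emph{is} ReLU-exact, and then rework the geometric argument that such bumps are subordinate to Euclidean balls of radius $\tau/4$ in $\mathcal M$. Moreover, $\phi_\times$ is an \emph{approximate} multiplier, so $\sum_j\phi_\times(\chi_j,\cdot)$ does not actually realize an exact convex combination; you flag the normalization as ``approximate'' but never propagate that approximation error into the final bound. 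The paper avoids all of this by folding $\rho_i$ into the target.

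Second, the origin of the factor $d$ in the approximation prefactor is misidentified. You assert the pullback $f_0\circ\pi_j^{-1}$ has H\"older constant ``controlled by $B_0$ and a factor polynomial in $d_{\mathcal M}$,'' and attribute the linear $d$ in the final bound to the fact that composing with $\pi_j$ widens the first layer by $O(d)$. Neither is right. Widening the first layer affects $\mathcal S$, hence the \emph{stochastic} term, not the approximation error. The paper's $d$ enters through the approximation error: pulling $f_0\in\mathcal{H}^\beta([0,1]^d,B_0)$ back along a linear chart map inflates the H\"older constant to $\sqrt{d/d_{\mathcal M}}\,B_0$ (see the paper's Step~2, citing Lemma~2 of Chen et al.), and squaring this in $\Vert f^*_n-f_0\Vert^2_{L^2(\nu)}$ produces the $d$ in $C_2B_0^2(\lfloor\beta\rfloor+1)^4 d\,(d_{\mathcal M})^{3\lfloor\beta\rfloor+1}(NM)^{-4\beta/d_{\mathcal M}}$. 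Your per-chart error bound omits the $\sqrt{d/d_{\mathcal M}}$ factor entirely, so if you carried your argument through faithfully it would \emph{not} reproduce the stated $d$-dependence; you would be claiming a strictly better bound than the theorem, which should be a red flag. To complete the proof along your route you need to track the chain-rule constant for $f_0\circ\pi_j^{-1}$ correctly and then carry $\sqrt{d/d_{\mathcal M}}$ through the $L^\infty$ bound and the final squaring.
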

	
	Theorem \ref{thm5} shows that the ERM $\hat{f}_n$
	achieves the optimal minimax rate $n^{-2\beta/(d_{\mathcal M}+2\beta)}$   up to a logarithmic factor
	under the exact manifold assumption.
	Under
	this assumption, the optimal
	rate
	up to a logarithmic factor
	has also been obtained by \citet{chen2019nonparametric} and  \citet{schmidt2019deep}.
	Our result differs from these previous ones in two important aspects. {\color{black} First,
		the prefactor in the error bound depends on the
		ambient dimension $d$ linearly instead of exponentially. } Second,
	the network structure in our result can be more flexible,  which does not need
	to be fixed-width or fixed-depth.
	Moreover, in our proof of Theorem \ref{thm5}, we apply linear coordinate maps instead of smooth coordinate maps used in the existing work.
	An attractive property of  linear coordinate maps is that they can be exactly
	represented
	by ReLU shallow networks without error. We also weaken the regularity conditions,
	we do not require the smoothness index of each coordinate map and the functions in the partition of unity to be
	$\beta d/d_{\mathcal M}$,  which depends on the ambient dimension $d$ and can be large.
}

\subsection{Low Minkowski dimension assumption}
{\color{black}
	Lastly,
	we consider the important case
	when data is supported on a set with low Minkowski dimension \citep{bishop_peres2016} and
	obtain fast convergence rates.
	
	\begin{definition}[Minkowski dimension]
		The upper and  lower Minkowski dimension of a set $A\subseteq \mathbb{R}^d$ are defined respectively as
		\begin{align*}
			\overline{\rm dim}_M(A):=\limsup_{\varepsilon\to0}\frac{\log\mathcal{N}(\varepsilon,\Vert\cdot\Vert_2,A)}
			{\log(1/\varepsilon)},\
			\underline{\rm dim}_M(A):=\liminf_{\varepsilon\to0}\frac{\log\mathcal{N}(\varepsilon,\Vert\cdot\Vert_2,A)}
			{\log(1/\varepsilon)}.
		\end{align*}
		If $\overline{\rm dim}_M(A)=\underline{\rm dim}_M(A)={\rm dim}_M(A)$, then ${\rm dim}_M(A)$ is called the Minkowski dimension of the set $A$.
	\end{definition}
	For simplicity, we denote $d^*={\rm dim}_M(A)$ below.
	The Minkowski dimension measures how the covering number of a set $A$ grows when the radius of the covering balls converges to zero. When $A$ is a manifold, its Minkowski dimension is the same as the dimension of the manifold. Since the Minkowski dimension only depends on the metric, it can
	be used to measure the dimensionality of highly non-regular sets such as fractals \citep{falconer2004fractal}. \cite{nakada2020adaptive} showed that deep neural networks can adapt to the low-dimensional structure of data, and the convergence rates do not depend on the nominal high dimensionality of data, but on its lower intrinsic Minkowski dimension.
	Based on random projection, the curse of dimensionality can also be lessened when data is supported on a set with low Minkowski dimension.

	\begin{theorem}[Non-asymptotic error bound] \label{thm4}
		Under model (\ref{model}), suppose that Assumptions \ref{A1}-\ref{A2} hold, $\mathcal{B}\geq\max\{1,B_0\}$ and $X$ is supported on a set $A\subseteq[0,1]^d$ with Minkowski dimension $d^*\equiv {\rm dim}_M(A)<d$ . Then for any $N, M\in\mathbb{N}^+$, the function class of ReLU multi-layer perceptrons $\mathcal{F}_n=\mathcal{F}_{\mathcal{D},\mathcal{W},\mathcal{U},\mathcal{S},\mathcal{B}}$ with width $\mathcal{W}=38(\lfloor\beta\rfloor+1)^23^{d_0}d_0^{\lfloor\beta\rfloor+1}N\lceil\log_2(8N)\rceil$ and depth $\mathcal{D}=21(\lfloor\beta\rfloor+1)^2M\lceil \log_2(8M)\rceil+2d_0$, the prediction error of the empirical risk minimizer $\hat{f}_n$	satisfies,
		\begin{align*}
			\mathbb{E} \Vert \hat{f}_n-f_0\Vert^2_{L^2(\nu)}\leq & C_1\mathcal{B}^5\frac{\mathcal{S}\mathcal{D}\log(\mathcal{S})
				(\log n)^5}{n}\\
			& +C_2\frac{B_0^2}{(1-\delta)^{\beta}}(\lfloor\beta\rfloor+1)^4
			d_0^{2\lfloor\beta\rfloor+\beta\vee1+1}d (NM)^{-4\beta/d_0}
		\end{align*}
		for $n \ge \text{Pdim}(\mathcal{F}_n)/2$, 	where $d\ge d_0\ge \kappa d^*/\delta^2=O(d^*/\delta^2)$ for $\delta\in(0,1)$ and some constant $\kappa>0$, and $C_1,C_2>0$ are constants not depending on $n,\mathcal{B},\mathcal{S},\mathcal{D},B_0,\beta,\kappa,\delta,N$ or $M$.
	\end{theorem}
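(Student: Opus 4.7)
The plan is to combine the basic error decomposition from Lemma \ref{lemma1} with a Johnson--Lindenstrauss type random projection argument and the $L^\infty$ Hölder approximation result Corollary \ref{thm_apx1} applied in the reduced space. Lemma \ref{lemma1} already hands us the stochastic error term $C_1\mathcal{B}^2\mathcal{S}\mathcal{D}\log(\mathcal{S})(\log n)^3/n$, so the entire task is to construct a single $f\in\mathcal{F}_n$ with the stated width and depth for which $\|f-f_0\|_{L^2(\nu)}^2$ is at most the claimed approximation term, i.e.\ of order $B_0^2(1-\delta)^{-\beta}(\lfloor\beta\rfloor+1)^4 d_0^{2\lfloor\beta\rfloor+\beta\vee1+1}d\,(NM)^{-4\beta/d_0}$.

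Since $A$ has Minkowski dimension $d^*$, its $\varepsilon$-covering number grows essentially as $\varepsilon^{-d^*}$, so a standard chaining/union-bound argument applied to a Gaussian random matrix $P\in\mathbb{R}^{d_0\times d}$ shows that, as soon as $d_0\ge \kappa d^*/\delta^2$ for a suitable constant $\kappa$, there exists a linear map $P$ satisfying the near-isometry property
\[
(1-\delta)\|x-y\|_2\;\le\;\|Px-Py\|_2\;\le\;(1+\delta)\|x-y\|_2,\qquad\forall x,y\in A.
\]
In particular $P$ is injective on $A$ and $P^{-1}:P(A)\to A$ is $(1-\delta)^{-1}$-Lipschitz, so that $g:=f_0\circ P^{-1}$ is $\beta$-Hölder on $P(A)$ with constant $O(B_0/(1-\delta)^\beta)$. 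Extending $g$ by a Whitney-type extension to a $\beta$-Hölder function $\tilde g$ on a cube $[a,b]^{d_0}\supseteq P(A)$, with comparable Hölder constant (a harmless $d_0^{\beta/2}$ factor appears when converting between $\ell_2$ and $\ell_\infty$ norms), prepares the ground for the approximation step.

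Now apply Corollary \ref{thm_apx1} to the rescaled version of $\tilde g$ on $[0,1]^{d_0}$: there is a ReLU network $\phi_0$ of width $38(\lfloor\beta\rfloor+1)^2 3^{d_0}d_0^{\lfloor\beta\rfloor+1}N\lceil\log_2(8N)\rceil$ and depth $21(\lfloor\beta\rfloor+1)^2M\lceil\log_2(8M)\rceil+2d_0$ with
\[
\|\phi_0-\tilde g\|_\infty\;\le\;19\,\frac{B_0}{(1-\delta)^\beta}\,(\lfloor\beta\rfloor+1)^2 d_0^{\lfloor\beta\rfloor+(\beta\vee1)/2}(NM)^{-2\beta/d_0}.
\]
Define $f(x):=\phi_0(Px)$. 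The linear map $P$ can be realized exactly by a single ReLU layer via the identity $y=\sigma(y)-\sigma(-y)$, costing only width $2d$ and one extra layer; this is precisely where the ambient dimension $d$ enters the network size and hence produces the linear $d$ factor in the final prefactor (as opposed to an exponential one). Since $\nu$ is supported on $A$ and $g\circ P=f_0$ on $A$, we obtain $|f(x)-f_0(x)|\le\|\phi_0-\tilde g\|_\infty$ for $\nu$-a.e.\ $x$, and squaring/integrating gives the desired approximation bound.

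The main obstacle is the random-projection step: the standard Johnson--Lindenstrauss lemma only handles finitely many points, so one must upgrade it to a uniform statement over the whole set $A$, and this is precisely where the Minkowski-dimension hypothesis is used (covering $A$ by $O(\varepsilon^{-d^*})$ balls, applying JL to the net, and controlling the discretization error via continuity of $\|P\cdot\|$). A secondary technical point is the Hölder extension $g\mapsto\tilde g$, which must preserve the Hölder exponent $\beta$ while keeping the Hölder constant of order $B_0/(1-\delta)^\beta$ up to factors polynomial in $d_0$; standard Whitney extensions suffice. Once these two ingredients are in place, the final bound follows by substituting the stated network width/depth into Lemma \ref{lemma1} and adding the two error terms.
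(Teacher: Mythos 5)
Your proposal follows essentially the same route as the paper: use a generalized Johnson--Lindenstrauss embedding (valid for infinite sets, with the entropy integral controlled by the Minkowski dimension), push $f_0$ forward to the lower-dimensional image, extend by Whitney to a H\"older function on a hypercube, approximate by a ReLU network via Corollary~\ref{thm_apx1}, and precompose with the linear projection. However, there is a concrete misstep in how the ambient dimension $d$ enters the approximation prefactor. You attribute the linear $d$ factor to the width $2d$ of the layer realizing the linear map $P$. That layer contributes to the network \emph{size} $\mathcal{S}$, hence to the stochastic term, but it contributes nothing to the approximation error prefactor. The $d$ factor in the second term of the bound actually comes from the geometry of the projected domain: with an (unnormalized) Rademacher or Gaussian projection, $P([0,1]^d)$ lands inside a cube $H = [-\sqrt{dd_0},\sqrt{dd_0}]^{d_0}$, so after rescaling $H$ to $[0,1]^{d_0}$ the H\"older constant of the extension $\tilde g$ picks up a factor of order $\sqrt{d}\cdot\sqrt{d_0}$; feeding this into Corollary~\ref{thm_apx1} produces $d^{1/2} d_0^{\lfloor\beta\rfloor+(\beta\vee1+1)/2}$ rather than your stated $d_0^{\lfloor\beta\rfloor+(\beta\vee1)/2}$ with no $d$-dependence. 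Squaring then gives the $d\,d_0^{2\lfloor\beta\rfloor+\beta\vee1+1}$ factor in the theorem. As written, your displayed $L^\infty$ bound for $\|\phi_0-\tilde g\|_\infty$ is missing both the $d^{1/2}$ and an additional $d_0^{1/2}$, so squaring it cannot reproduce the stated right-hand side; the missing factors are exactly the domain-rescaling contribution you glossed over. Everything else—the JL step for the whole set (not just a net), injectivity of $P$ on $\bar{A}$, the Whitney extension, composing $\phi_0\circ P$, and invoking Lemma~\ref{lemma1}—matches the paper's argument.
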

	
	As discussed in Subsection \ref{efficiency}, to achieve the optimal convergence rate with a minimal network size,  we can set $\mathcal{F}_n=\mathcal{F}_{\mathcal{D},\mathcal{W},\mathcal{U},\mathcal{S},\mathcal{B}}$ to consist of fixed-width networks with
	\begin{align*}
		\mathcal{W}&=114(\lfloor\beta\rfloor+1)^23^{d_0}d_0^{\lfloor\beta\rfloor+1},\
		\mathcal{D}=21(\lfloor\beta\rfloor+1)^2\lceil n^{d_0/2(d_0+2\beta)}\log_2(8n^{d_0/2(d_0+2\beta)})\rceil,\\
		\mathcal{S}&=O(\mathcal{W}^2\mathcal{D})=O( (\lfloor\beta\rfloor+1)^63^{2d_0}d_0^{2\lfloor\beta\rfloor+2}\lceil n^{d_0/{2(d_0+2\beta)}}(\log n)\rceil).
	\end{align*}
	Then, the prediction error of $\hat{f}_n$ in Theorem \ref{thm4} is
	\begin{align*}
		\mathbb{E} \Vert \hat{f}_n-f_0\Vert^2_{L^2(\nu)}&\leq C_3(1-\delta)^{-\beta}\mathcal{B}^53^{3d_0}
		d_0^{3\lfloor\beta\rfloor+3}(\lfloor\beta\rfloor+1)^9 d n^{-2\beta/(d_0+2\beta)}(\log n)^8,
	\end{align*}
	where $C_3>0$ is a constant not depending on $n,d,d_0,\mathcal{B},\mathcal{S},\mathcal{D},B_0,\delta$ or $\beta$.


Prior to this work,  \citet{nakada2020adaptive}
obtained an error bound with convergence rate $n^{-2\beta/(d^{\#}+2\beta)}$ up to $\log n$ factor for a $d^{\#}>{\rm dim}_M(A)=d^*$ where $d^{\#}$ can be arbitrarily close to the Minkowski dimension $d^*$ of the support of the data. While our obtained convergence rate is $n^{-2\beta/(d_0+2\beta)}$ up to a $\log n$ factor  for $d_0=O(d^*/\delta^2)$ with $\delta\in(0,1)$.  {\color{black} The convergence rate of \cite{nakada2020adaptive} can be faster than that of ours.} The prefactor in the error bound of \cite{nakada2020adaptive}
is  $O(d^{d^*}+5^d)$, while ours is
 $O(d9^{d^*}d^*{^{3\lfloor\beta\rfloor+3}}),$ which can be much smaller.
	In  their proof of the approximation result (Theorem 5 of \cite{nakada2020adaptive}),
	the minimum set of hypercubes covering the support of $X$ is partitioned into $5^d$ subsets. Within each subset, the hypercubes are separated by a constant distance from each other. For each such subset, a trapezoid-type deep neural network approximates the Taylor expansion of $f_0$ locally. Then a large neural network combining these local approximators is used to realize the whole approximation on the support of $X$.
    To ensure an overall $\varepsilon$ approximation error,
	the network size must be $C_1\varepsilon^{-d^{\#}/\beta}+C_2$, where $C_1=2[(50d+17)d^{d^{\#}}(3M)^{d^{\#}/\beta}c_1+2d\{11+(1+\beta)/d^{\#}\}c_2\{2^{d^{\#}/\beta}+c_3d^{d^{\#}}(3M)^{d^{\#}/\beta}\}]=O(d^{d^{\#}}3^{d^{\#}/\beta})$ for some constants $c_1,c_2,c_3,M>0$ and $C_2=2[12+42*5^d+2d+2d\{11+(1+\beta)/d^{\#}\}(1+\lceil\log_2\beta\rceil)]=O(5^d)$; and, these prefactors of the network size, which could be
	large for moderate $d$ or $d^{\#}$, will lead to a large prefactor of the overall non-asymptotic error bound.
	
    In comparison, in Theorem \ref{thm4} we allow relatively more flexible network shapes and the network width could be a multiple of $3^{d_0}d_0^{\lfloor\beta\rfloor+1}$ rather than $d^{d_0}$ or $5^d$, to achieve a $9^{d_0}d d_0^{3\lfloor\beta\rfloor+3}$ prefactor of the generalization error bound.
	
	In our proof of Theorem \ref{thm4}, we leverage a generalized Johnson-Lindenstrauss lemma for infinite sets (see, for example, Theorem 13.15 in \cite{boucheron2013concentration})
	to project the closure of the support of $X$ into lower-dimensional space. Then our newly proved approximation result  Theorem \ref{thm_apx} is applied in the lower-dimensional space, which is in terms of a smaller effective dimensionality related to the Minkowski dimension of the support of $X$.
	The projection is approximately a linear transformation and can be exactly represented by a three-layer ReLU network,  thus it causes no approximation error.
	In addition, this also avoids the $5^d$ prefactor in the formula of error bounds or the network width.
	
	Finally, we note that the results of  \cite{nakada2020adaptive} and Theorem \ref{thm4}  do not cover  Theorem \ref{thm3}, nor vice versa. On one hand, an approximate manifold assumption allows a closed ball or a sphere in $\mathbb{R}^d$ contained in the support of $X$, in which case the Minkowski dimension of such approximate low-dimensional manifold is $d$ and no faster convergence rate can be obtained. To see this, if a closed ball $\mathbb{B}(a)$ (or a sphere) with radius $a>0$ in $\mathbb{R}^d$ is contained in $A\subseteq[0,1]^d$, the support of $X$, then
	the $\epsilon$-covering number of $A$ is no less than $(a/\epsilon)^d$
	(see e.g., Corollary 4.2.13 in \cite{vershynin2018high}),
	which implies that the Minkowski dimension of $A$ is $d$. On the other hand, the Minkowski dimension can be used to measure non-smooth low-dimensional set such as fractals which may not be a low-dimensional manifold or a neighborhood of a low-dimensional manifold.
	
}

\section{Related works}	
\label{review}	In this section, we discuss the connections and differences between our
 work and the related works with respect to the non-asymptotic error bounds, 	the structural assumptions on the target regression function $f_0$,  and the distributional assumptions on the data. 	
	
 \subsection{Error bounds}
 {
		\label{bound}
Recently, \citet{bauer2019deep}, \citet{schmidt2020nonparametric} and \citet{farrell2021deep}
studied the convergence properties of nonparametric regression using feedforward neural networks.
\citet{bauer2019deep} required that the activation function satisfies certain smoothness conditions; \citet{schmidt2020nonparametric} and \citet{farrell2021deep} considered the ReLU activation function. \citet{bauer2019deep} and \citet{schmidt2020nonparametric} assumed that the regression function has a composition structure similar.
They showed that nonparametric regression using feedforward neural networks with
a polynomial-growing network width $\mathcal{W}=O(d^\beta)$ achieves the optimal rate of convergence \citep{stone1982optimal} up to a $\log n$ factor,  however, with a prefactor
$C_d=O(a^d)$ for some $a \ge 2,$
unless the network width $\mathcal{W}=O(a^d)$ and size $\mathcal{S}=O(a^d)$ grow exponentially as $d$ grows.

{
A key difference between our work and the existing results
is in  how the prefactor $C_d$ depends on $d$.
Specifically, 
the prefactor $C_d$ in our results depends polynomially on $d$
and involves $d^\beta$ as a linear factor.
 In comparison, the prefactor $C_d$ in the error bounds obtained by \citet{bauer2019deep}, \citet{schmidt2020nonparametric}, \citet{farrell2021deep} and others depends on $d$ exponentially. For high-dimensional data with a large $d$, it is not clear when such an error bound is useful in a non-asymptotic sense.
Similar concerns about this type of error bounds as established in \citet{schmidt2020nonparametric}
are raised in the discussion by \citet{ghorbani2020discussion},
who looked at the example of additive models and pointed out that in the
upper bound of the form { $\mathbb{E}\Vert\hat{f}_n-f_0\Vert^2_{L^2(\nu)} \le C(d) n^{-\epsilon_*} \log^2 n$ for some $\epsilon_*>0$} obtained in  \citet{schmidt2020nonparametric},  the $d$-dependence of the prefactor $C(d)$ is not characterized. It also assumes $n$ large enough, that is, $n \geq n_0(d)$ for an unspecified $n_0(d)$. They further pointed out that using the proof technique in the paper,
it requires $n  \gtrsim d^{d}$ for the error bound to hold in the additive models.
For large $d$, such a sample size requirement is
difficult to be satisfied in practice.
}
Another important difference between our results and the existing ones is that
our error bounds are given explicitly in terms of the width and the depth of the network.
This is more informative than the results characterized by just the network size.
Such an explicit error bound can provide guidance to the design of networks. For example, we are able to provide more insights into how the error bounds depend on the network structures, as given in  Corollaries \ref{c1}-\ref{c3} in Section \ref{efficiency}.

	
Finally,  in contrast to the results of \citet{gyorfi2006distribution} and \cite{farrell2021deep}, we do not make the boundedness assumption on the response $Y$ and only assume $Y$ to be sub-exponential. 	\citet{bauer2019deep} assumes that $Y$ is sub-Gaussian.
\citet{schmidt2020nonparametric} assumes i.i.d. normal error terms and requires the network parameters (weights and bias) to be bounded by $1$ and satisfy a sparsity constraint, which is not
the usual practice in the training of neural network models in applications.

\subsection{
Structural assumptions on the regression function}
\label{review:true}
 A well-known semiparametric model for  mitigating the curse of dimensionality is the single index model
	$f_0(x)=g(\theta^\top x),  \quad {x\in\mathbb{R}^d},$
where $g:\mathbb{R}\to\mathbb{R}$ is a univariate function and $\theta\in\mathbb{R}^d$ is a $d$-dimensional vector
\citep{hardle1993optimal,
horowitz1996, 
kong2007variable}.
A generalization of the single index model is
	$f_0(x)=\sum_{k=1}^K g_k(\theta_k^\top x),  \quad x\in\mathbb{R}^d,$
	where $K\in\mathbb{N}$, $g_k:\mathbb{R}\to\mathbb{R}$ and $\theta_k\in\mathbb{R}^d$ \citep{friedman1981projection}. In these models, the rate of convergence can be $n^{-2\beta/(2\beta+1)}$ up to some logarithmic factor if the univariate functions $g_k (\cdot)$ are $\beta$-H\"older smooth.
Another well-known model is the additive model
\citep{stone1986dimensionality}
	$f_0(x_1, \ldots,x_d)=f_{0,1}(x_1)+\cdots +f_{0,d}(x_d),  \quad {x=(x_1, \ldots, x_d)^\top \in\mathbb{R}^d}.$
For $\beta$-H\"older smooth univariate functions $f_{0,1}, \ldots,f_{0,d}$,
\cite{stone1982optimal} showed that the optimal minimax rate of convergence is $n^{-2\beta/(2\beta+1)}$.  \cite{stone1994use} also generalized the additive model to an interaction model
	$f_0(x)=\sum_{I\subseteq\{1,...,d\},\vert I\vert=d^*}f_I(x_I),  \quad x=(x_1,\ldots,x_d)^\top \in\mathbb{R}^d, $
	where $d^*\in\{1,\ldots,d\}$, $I=\{i_1,\ldots,i_{d^*}\}$,  $1\leq i_1<\ldots<i_{d^*}\leq d$, $x_I=(x_{i_1}, \ldots, x_{i_{d^*}})$ and all $f_I$ are $\beta$-H\"older smooth functions defined on $\mathbb{R}^{\vert I\vert}$. In this model, the optimal minimax rate of convergence was proved to be $n^{-2\beta/(2\beta+d^*)}$.
	
	 \citet{yang2015minimax} studied the minimax-optimal nonparametric regression under the so-called sparsity inducing condition, under which $f_0$ depends on a small subset of $d^*$ predictors with $d^*\leq\min\{n,d\}$. Under this assumption, for a $\beta$-H\"older smooth function $f_0$ and  continuously distributed $X$ with a bounded density on $[0,1]^d$, they proved that the prediction error is of the order $O(c_1 n^{-2\beta/(d^*+2\beta)}+c_2 \log(d/d^*)d^*/n).$  \citet{yang2015minimax} noted that, under the sparsity inducing assumption, the estimation still suffers from the curse of dimensionality in the large $d$ small $n$ settings, unless $d^*$ is substantially smaller than $d$.

For sigmoid or bounded continuous activated deep regression networks, \citet{bauer2019deep} showed that  the curse of dimension can be circumvented by assuming that $f_0$ satisfies the $\beta$-H\"older smooth {\it generalized hierarchical interaction model} of order $d^*$ and level $l$.
	Under such a structural assumption,  the target function $f_0$ is essentially a composition of multi-index model and $d^*$-dimensional smooth functions.
 \citet{bauer2019deep} showed that the convergence rate of the prediction error with this assumption achieves
 $(\log n)^3n^{-2\beta/(2\beta+d^*)}$.
For the ReLU activated deep regression networks, \citet{schmidt2020nonparametric} alleviated the curse of dimensionality by assuming that
$f_0$ is a composition of a sequence of  functions:
	$f_0=g_q\circ g_{q-1}\circ \cdots \circ g_1\circ g_0$
	with $g_i:[a_i,b_i]^{d_i}\to[a_{i+1},b_{i+1}]^{d_{i+1}}$ and $\vert a_i\vert$, $\vert b_i\vert\leq K$ for some positive $K$ and all $i$. For each $g_i=(g_{ij})^\top_{j=1,\ldots,d_{i+1}}$ with $d_{i+1}$ components, let $t_i$ denote the maximal number of variables on which each of the $g_{ij}$ depends on, and it is assumed that each $g_{ij}$ is a $t_i$-variate function belonging to
the ball of $\beta_i$-H\"older smooth functions with radius $K$,
The convergence rate is $\phi_n=\max_{i=0,\ldots,q} n^{-{2\beta_i^*}/{(2\beta_i^*+t_i)}},$
where	$\beta_i^*=\beta_i\Pi_{\ell=i+1}^q\min\{\beta_\ell,1\}.$
The resulting rate of convergence is shown to be $C_d (\log n)^3\phi_n.$
However,
 the prefactor $C_d$ in these results may depend on $d$ exponentially.
	
Recently, \cite{kohler2019estimation} assumed that the regression function $f_0$ has a  locally low dimensionality $d^*$ and obtained results that can circumvent the curse of dimensionality.
	Since such a function $f$
is generally not globally smooth, not even continuous, \cite{kohler2019estimation} assumed the true target function $f_0$ is bounded between two functions with low local dimensionality.
Under the $\beta$-H\"older smoothness assumption on $f_0$, proper distributional assumptions on $X$ and other suitable conditions, they showed that the prediction error of networks with the \textit{sigmoidal activation function} can attain the rate
$(\log n)^3 n^{-2\beta/(d^*+2\beta)}.$
	
\subsection{Assumptions on the support of data distribution}
	\label{review:data}
There have been growing evidence and examples indicating that  high-dimensional data tend to have low-dimensional latent structures in many
applications such as image processing, video analysis, natural language processing \citep{belkin2003laplacian,hoffmann2009local,nakada2020adaptive}.
There has been a great deal of efforts  to
deal with the curse of dimensionality by assuming that the data of concern lie on an embedded manifold within a high-dimensional space, e.g.,  kernel methods (\cite{kpotufe2013adaptivity}), $k$-nearest neighbor(\cite{kpotufe2011k}), local regression (\cite{bickel2007local,cheng2013local,aswani2011regression}), Gaussian process regression (\cite{yang2016bayesian}), and deep neural networks (\cite{nakada2020adaptive,schmidt2019deep,chen2019efficient,chen2019nonparametric}). Many studies have focused on representing the data on the manifold itself, e.g.,  manifold learning or dimensionality reduction (\cite{pelletier2005kernel,hendriks1990nonparametric,tenenbaum2000global,
donoho2003hessian,belkin2003laplacian,lee2007nonlinear}).
 Once the data can be mapped into a lower-dimensional space or well represented,  the curse of dimensionality can be mitigated.
Recently, several authors considered nonparametric regression using neural networks with  a low-dimensional manifold support assumption \citep{chen2019efficient,chen2019nonparametric,schmidt2019deep,
cloninger2020relu, nakada2020adaptive}.
In \cite{chen2019nonparametric}, they focus on the estimation of the target function $f_0$ on a bounded $d^*$-dimensional compact Riemannian manifold isometrically embedded in $\mathbb{R}^d$.  When $f_0$ is assumed to be $\beta$-H\"older smooth, approximation rate with ReLU networks for $f_0$ was derived. The resulting prediction error is of the rate $O(n^{-2\beta/(d^*+2\beta)}(\log n)^3),$ when the network class $\mathcal{F}_{\mathcal{D},\mathcal{U},\mathcal{W},\mathcal{S},\mathcal{B}}$ is properly designed with depth $\mathcal{D}=O(\log n)$, width $\mathcal{W}=O(n^{d^*/(2\beta+d^*)})$, 
size $\mathcal{S}=O(n^{d^*/(2\beta+d^*)}\log n)$ and each parameter is bounded by a given constant.
{Under similar assumptions, \cite{nakada2020adaptive}
established the approximation rate with deep ReLU networks for $f_0$ defined on a  set 
with a low Minkowski dimension. Their rate is in terms of Minkowski Dimension $d^*_0.$
The Minkowski dimension can describe a
broad class of low dimensional sets where the manifold needs not to be smooth. The relation between the Minkowski dimension and other dimensions can be found in
\cite{nakada2020adaptive}.
{\color{black} Similar convergence rates were obtained by \cite{schmidt2019deep} in terms of the manifold dimension under the exact manifold support assumption.}  Our Theorem \ref{thm5} reduces the  exponentially dependence  of the prefactor on $d$ in these previous works  into
linearly allowing   more flexible network structures.

Theorem \ref{thm3} differs from the aforementioned existing results in several aspects.
First, these existing results assume that the distribution of $X$ is supported on an exact low-dimensional manifold or a set with low Minkowski dimension, whereas in Theorem \ref{thm3} we assume that it is supported on an approximate low-dimensional manifold, whose Minkowski dimension can be the same as that of the ambient space $d$.
Second, the size $\mathcal{S}$ of the network or the nonzero weights and bias need to grow at the rate of $2^{d_{\mathcal{M}}}$ with respect to the dimension ${d_{\mathcal{M}}}$ in many existing results.  The term  $2^{{d_{\mathcal{M}}}}$ will dominate the prefactor
in  the excess risk bound,
which could destroy the bound even when the sample size $n$ is large. In comparison, our error bound depends  on ${d_{\mathcal{M}}}$ polynomially through $({d_{\mathcal{M}}\log d})^{3\lfloor\beta\rfloor+3}$ in the approximate manifold case.
Third, to achieve the optimal rate of convergence, the network shape is generally limited to certain types such as a fixed-depth network in \cite{nakada2020adaptive} or a network with depth $\mathcal{D}=O(\log n)$ in \citet{schmidt2019deep} and \citet{chen2019nonparametric},
while we allow relatively more flexible network designs. Moreover,  our assumptions on the data distribution are
weaker as discussed earlier.
Lastly, in Theorem \ref{thm4} we derived an error bound with a
convergence rate $n^{-2\beta/(2\beta+d_0)}$ with $d_0=O(d^*)$ in terms of the Minkowski dimension $d^*$, which alleviates the curse of dimensionality. As discussed  below Theorem \ref{thm4}, we used a different argument based on a generalized Johnson-Lindenstrauss lemma
for dimension reduction in our proof from that of \cite{nakada2020adaptive}.
We allow a relatively more flexible network architecture and achieve an improved prefactor in the excess risk bound.

}

\section{Conclusions} 
\label{sec5}

In this paper, we have established  neural network approximation error bounds with polynomial prefactors
for  H\"older smooth functions and  non-asymptotic excess risk bounds for deep nonparametric regression.
We have also derived new non-asymptotic excess risk bounds under \text{manifold assumptions}, including an approximate low-dimensional manifold assumption.
To the best of our knowledge, our work is the first to show that deep nonparametric regression can mitigate the \text{curse of dimensionality} under an \text{approximate manifold} assumption.
Moreover, we have provided  a characterization of  how excess risk bounds depend on the network architecture,  obtained a new error bound with a new proof under the Minkowski dimension assumption and established a new error bound with the optimal convergence rate and an improved prefactor under the exact manifold assumption.

As we have remarked below Theorem \ref{thm_apx},
our work builds on the results of \cite{shen2019deep} and
 \cite{lu2020deep}.
Specifically, \cite{shen2019deep} derived a quantitative and non-asymptotic approximation rate  $19\sqrt{d}\omega_f(N^{-2/d}L^{-2/d})$ in terms of width $\mathcal{O}(N)$ and depth $\mathcal{O}(L)$ of the ReLU networks for continuous target function $f$,  where $\omega_f(\cdot)$ denotes its modulus of continuity. When this result is applied to H\"older continuous target functions with order (or smoothness index) $\alpha\in(0,1]$, the approximation rate becomes $19\sqrt{d}N^{-2\alpha/d}L^{-2\alpha/d}$, which is nearly optimal. \cite{lu2020deep} showed that deep ReLU networks of width $\mathcal{O}(N\log N)$ and depth $\mathcal{O}(L\log L)$ can approximate smooth function $f\in C^s([0,1]^d)$ with a nearly optimal (up to a logarithmic factor) approximation error $85(s+1)^d8^s\Vert f\Vert_{C^s([0,1]^d)} n^{-2s/d}L^{-2s/d}$, where $C^s([0,1]^d)$ denotes smooth function space with smoothness index $s\in N^+$(a positive integer), and $\Vert\cdot\Vert_{C^s([0,1]^d)}$ denotes the H\"older norm. The result holds for smooth target function with its smoothness index being a positive integer $s\ge1$, while the prefactor of the approximation error bound is  $(s+1)^d$, which depends on the dimension $d$ exponentially.
In comparison, our approximation results hold for H\"older smooth target functions with smoothness index $\beta>0$.
Moreover, when the smoothness index $\beta>1$, our approximation error bound
has a prefactor depending on $d$ polynomially.

There are several
limitations in this work.
{\color{black} First,  the optimal rate of convergence under the approximate manifold assumption remains unknown to us.
It appears that one is unlikely to obtain an error bound with rate depending only on the intrinsic dimension $d_\mathcal{M}$ of the manifold, as the dimension of  an approximate manifold is still $d$.
Second, it is not clear what are the best prefactors for the error bounds in the present setting.
This is an interesting by challenging problem in the present setting.
}
Finally,
it would be interesting to generalize the results in this work
to other problems,
 such as density estimation, conditional density estimation and generative learning.
These problems deserve further study in the future.

\section*{Acknowledgements}
The authors wish to thank the editors, the associate editor and three anonymous reviewers for
their insightful comments and constructive suggestions that helped improve the paper significantly.
We are especially grateful to them for their suggestions to
consider ReLU network approximation for higher order H\"older smooth functions, the generalization error bound under an exact manifold assumption  and when data is supported on a set with a low Minkowski dimension,  which led to Theorems \ref{thm_apx}, \ref{thm5} and \ref{thm4}.

Y. Jiao is supported  by the National Science Foundation of China grant 11871474 and by the research fund of KLATASDSMOE of China.
Y. Lin is supported by the Hong Kong Research Grants Council (Grant No.
14306219 and 14306620), the National Natural Science Foundation of China (Grant No.
11961028) and Direct Grants for Research, The Chinese University of Hong Kong.
J. Huang is partially supported by the U.S. NSF grant DMS-1916199 while he was at the University of Iowa and the research grant P0042888 from The Hong Kong Polytechnic University.
	

\begin{appendix}

\setcounter{equation}{0}  
\renewcommand{\theequation}{A.\arabic{equation}}
\setcounter{table}{0}
\renewcommand{\thetable}{A.\arabic{table}}
\setcounter{figure}{0}
\renewcommand{\thefigure}{A.\arabic{figure}}
\setcounter{definition}{0}
\renewcommand{\thedefinition}{A.\arabic{definition}}
\setcounter{equation}{0}  
\setcounter{theorem}{0}
     \renewcommand{\thetheorem}{\Alph{section}.\arabic{theorem}}

\section{Proofs}
In this appendix, we prove Lemmas \ref{lemma0} and \ref{lemma1}, Theorems  \ref{thm_apx}, \ref{thm2},  \ref{thm3}, \ref{thm5} and \ref{thm4}, {\color{black} Corollaries \ref{thm_apx1} and \ref{c1}}.
Theorem {\ref{thm1}}  is a direct consequence of Lemma \ref{lemma1} and Theorem 1 in \cite{yarotsky2018optimal}, thus we omit its proof.

\subsection{Proof of Lemma \ref{lemma0}}
%

\begin{proof}
Since $f_0$ is the minimizer of quadratic functional  $L(\cdot)$,   by direct calculation
we have
\begin{equation}\label{ine1}
\mathbb{E}_{S} [\|\hat{f}_n-f_0\|_{L^2(\nu)}^2]=\mathbb{E}_{S} [L(\hat{f}_n)-L(f_0)].
\end{equation}
By the definition of the empirical risk minimizer, we have
$$L_n(\hat{f}_n)-L_n(f_0)\leq L_n(\bar{f})-L_n(f_0),$$
 where $\bar{f} \in\arg\inf_{f\in\mathcal{F}_n}\Vert f-f_0\Vert^2_{L^2(\nu)}.$
Taking expectations on both side we get
\begin{equation}\label{ine2}
\mathbb{E}_{S}[L_n(\hat{f}_n)-L(f_0)]\leq L(\bar{f})-L(f_0) = \|\bar{f}-f_0\|^2_{L^2(\nu)}.
\end{equation}
Multiplying  both sides of (\ref{ine2}) by 2,  adding the resulting inequality  with (\ref{ine1}) and rearranging the terms, we obtain Lemma \ref{lemma0}.
	\end{proof}

\subsection{Proof of Lemma \ref{lemma1}}
\begin{proof}
Let $S=\{Z_i=(X_i,Y_i)\}_{i=1}^n$ be a random sample form the distribution of $Z=(X,Y)$ and $S^\prime=\{Z_i^\prime=(X^\prime_i,Y^\prime_i)\}_{i=1}^n$ be another sample independent of $S$. Define $g(f,Z_i)=(f(X_i)-Y_i)^2-(f_0(X_i)-Y_i)^2$ for any $f$ and sample $Z_i$.
Observing
	\begin{equation}\label{bound0}
		\mathbb{E}_{S}[ L(f_0)-2L_{n}(\hat{f}_n)+L(\hat{f}_n)]=\mathbb{E}_{S}\Big[ \frac{1}{n}\sum_{i=1}^n\big\{-2g(\hat{f}_\phi,Z_i)+\mathbb{E}_{S^\prime}g(\hat{f}_\phi,Z_i^\prime)\big\}\Big].
\end{equation}
By Lemma \ref{lemma0} and the above display, it is seen that the expected  prediction error  $$\mathcal{R}(\hat{f}_n):=\mathbb{E}_{S}[\|\bar{f}-f_0\|^2_{L^2(\nu)}]$$
is upper bounded by the sum of the expectation of a stochastic term and the approximation error.
	Next, we bound the expectation of  the stochastic term with truncation and the classical chaining technique from the empirical process theory. In the following, for ease of presentation, we write {$G(f,Z_i)=\mathbb{E}_{S^\prime}\{g(f,Z_i^\prime)\}-2g(f,Z_i)$ for $f\in\mathcal{F}_\phi$}.

	Let $\beta_n\geq \mathcal{B}\geq1$ be a positive number which may depend on the sample size $n$. Denote $T_{\beta_n}$ as the truncation operator at level $\beta_n$, i.e., for any $Y\in\mathbb{R}$,
$T_{\beta_n}Y=Y$ if $\vert Y\vert\leq\beta_n$ and $T_{\beta_n}Y= \beta_n\cdot {\rm sign}(Y)$ otherwise. Let $f_{\beta_n}(x)=\mathbb{E}\{T_{\beta_n}Y|X=x\}$ be the regression function of the truncated $Y$. Recall that $g(f,Z_i)=(f(X_i)-Y_i)^2-(f_0(X_i)-Y_i)^2$, we define $g_{\beta_n}(f,Z_i)=(f(X_i)-T_{\beta_n}Y_i)^2-(f_{\beta_n}(X_i)-T_{\beta_n}Y_i)^2$ and $G_{\beta_n}(f,Z_i)=\mathbb{E}_{S^\prime}\{g_{\beta_n}(f,Z_i^\prime)\}-2g_{\beta_n}(f,Z_i)$ for $f\in\mathcal{F}_n$.
Then for any $f\in\mathcal{F}_n$ we have
{	\begin{align*}
	 \Big\vert g(f,Z_i)-g_{\beta_n}(f,Z_i)\Big\vert=& \Big\vert 2\{f(X_i)-f_0(X_i)\}(T_{\beta_n}Y_i-Y_i)\\
		&+(f_{\beta_n}(X_i)-T_{\beta_n}Y_i)^2-(f_0(X_i)-T_{\beta_n}Y_i)^2\Big\vert\\
		\leq & \Big\vert 2\{f(X_i)-f_0(X_i)\}(T_{\beta_n}Y_i-Y_i)\Big\vert \\
		&+\Big\vert(f_{\beta_n}(X_i)-T_{\beta_n}Y_i)^2-(f_0(X_i)-T_{\beta_n}Y_i)^2\Big\vert\\
		\leq & 4\mathcal{B}\vert T_{\beta_n}Y_i-Y_i\vert\\
		&+\vert f_{\beta_n}(X_i)-f_0(X_i)\vert\vert f_{\beta_n}(X_i)+f_0(X_i)-2T_{\beta_n}Y_i\vert\\
		\leq &4\mathcal{B}\vert Y_i\vert I(\vert Y_i\vert>\beta_n) +4\beta_n\vert f_{\beta_n}(X_i)-f_0(X_i)\vert\\
		\leq &4\mathcal{B}\vert Y_i\vert I(\vert Y_i\vert>\beta_n) +4\beta_n\vert T_{\beta_n}Y_i-Y_i\vert\\
		\leq  &4\mathcal{B}\vert Y_i\vert I(\vert Y_i\vert>\beta_n) +4\beta_n\vert Y_i\vert I(\vert Y_i\vert>\beta_n),
\end{align*}
}
and
\begin{align*}
		\mathbb{E}_S\{ g(f,Z_i)\}  \leq& \mathbb{E}_S\{ g_{\beta_n}(f,Z_i)\} +4\mathcal{B}\mathbb{E}_S\{\vert Y_i\vert I(\vert Y_i\vert>\beta_n)\} +4\beta_n\mathbb{E}_S\{\vert Y_i\vert I(\vert Y_i\vert>\beta_n)\}\\
		\leq & \mathbb{E}_S\{ g_{\beta_n}(f,Z_i)\} + 8\beta_n\frac{2}{\sigma_Y}\mathbb{E}_S\Big[\frac{\sigma_Y}{2}\vert Y_i\vert\exp\big\{\frac{\sigma_Y}{2}(\vert Y_i\vert-\beta_n)\big\}\Big]\\
		\leq & \mathbb{E}_S\{ g_{\beta_n}(f,Z_i)\} + 16\frac{\beta_n}{\sigma_Y}\mathbb{E}_S\exp(\sigma_Y\vert Y_i\vert)\exp(-{\sigma_Y}\beta_n/2).
	\end{align*}
	By Assumption 2, the response $Y$ is sub-exponentially distributed and $\mathbb{E}\exp(\sigma_Y\vert Y_i\vert)<\infty$. Therefore,
	\begin{equation} \label{bound2}
		\mathbb{E}_{S}\Big[ \frac{1}{n}\sum_{i=1}^n G(\hat{f}_n,Z_i) \Big]
		\leq\mathbb{E}_{S}\Big[ \frac{1}{n}\sum_{i=1}^n G_{\beta_n}(\hat{f}_n,Z_i) \Big]+c_1\beta_n\exp(-\sigma_Y\beta_n/2),
	\end{equation}
where $c_1$ is a constant not depending on $n$ and $\beta_n$.

{\color{black}
Note that $\vert T_{\beta_n}Y\vert\le \beta_n$, $\Vert g_{\beta_n} \Vert_\infty\le\beta_n$ and $\beta_n\ge\mathcal{B}\ge1$. 
Then by Theorem 11.4 of \cite{gyorfi2006distribution}, for each $n\ge1$,
	\begin{align*}
		&P\Big\{\frac{1}{n}\sum_{i=1}^nG_{\beta_n}(\hat{f}_n,Z_i)>t\Big\}\\
		\le&P\Big\{ \exists f\in\mathcal{F}_n: \frac{1}{n}\sum_{i=1}^nG_{\beta_n}(f_,Z_i)>t\Big\}\\
		=&P\Big\{\exists f\in\mathcal{F}_n: \mathbb{E}_{S^\prime} \{g_{\beta_n}(f,Z_i^\prime)\}-\frac{2}{n}\sum_{i=1}^ng_{\beta_n}(f,Z_i)>t\Big\}\\
		\le& 14\mathcal{N}_{2n}(\frac{t}{80\beta_n},\Vert\cdot\Vert_\infty,\mathcal{F}_n)\exp\left(-\frac{tn}{5136\beta_n^4}\right)
	\end{align*}
	This leads to a tail probability bound of $\sum_{i=1}^n G_{\beta_n}(f_{j^*},Z_i)/n$. 
	
	Then for $a_n>0$,
	\begin{align*}
		\mathbb{E}_S\Big[ \frac{1}{n}\sum_{i=1}^nG_{\beta_n}(f_{j^*},Z_i)\Big]\leq& a_n +\int_{a_n}^\infty P\Big\{\frac{1}{n}\sum_{i=1}^nG_{\beta_n}(f_{j^*},Z_i)>t\Big\} dt\\
		\leq& a_n+\int_{a_n}^\infty14\mathcal{N}_{2n}(\frac{t}{80\beta_n},\Vert\cdot\Vert_\infty,\mathcal{F}_n)\exp\left(-\frac{tn}{5136\beta_n^4}\right) dt\\
			\leq& a_n+\int_{a_n}^\infty14\mathcal{N}_{2n}(\frac{a_n}{80\beta_n},\Vert\cdot\Vert_\infty,\mathcal{F}_n)\exp\left(-\frac{tn}{5136\beta_n^4}\right) dt\\
		\leq& a_n+ 14\mathcal{N}_{2	n}(\frac{a_n}{80\beta_n},\Vert\cdot\Vert_\infty,\mathcal{F}_n)\exp\left(-\frac{a_nn}{5136\beta_n^4}\right)\frac{5136\beta_n^4}{n}.
	\end{align*}
We choose $a_n=\log(14\mathcal{N}_{2n}(\frac{1}{n},\Vert\cdot\Vert_\infty,\mathcal{F}_n)
)\cdot{5136\beta_n^4}/{n}$. Note that  $a_n/(80\beta_n)\ge 1/n.$ and $\mathcal{N}_{2n}(\frac{1}{n},\Vert\cdot\Vert_\infty,\mathcal{F}_n)\ge \mathcal{N}_{2n}(\frac{a_n}{80\beta_n},\Vert\cdot\Vert_\infty,\mathcal{F}_n)$.
	Then we have
	\begin{equation} \label{bound3}
		\mathbb{E}_S\Big[ \frac{1}{n}\sum_{i=1}^nG_{\beta_n}(f_{j^*},Z_i)\Big]\leq \frac{5136\beta_n^4(\log(14\mathcal{N}_{2n}(\frac{1}{n},\Vert\cdot\Vert_\infty,\mathcal{F}_n))+1)}{n}.
	\end{equation}
	Setting 
	$\beta_n=c_2\mathcal{B}\log n$ and combining 
	(\ref{bound2}) and  (\ref{bound3}), we prove (\ref{bound5a}). Further combining  (\ref{bound0}) we get

	\begin{equation} \label{bound5}
		\mathcal{R}(\hat{f}_n)\leq c_3\mathcal{B}^4\frac{\log\mathcal{N}_{2n}(\frac{1}{n},\Vert \cdot\Vert_\infty,\mathcal{F}_n)(\log n)^4}{n}+ 2\Vert f^*_n-f_0\Vert^2_{L^2(\nu)},
	\end{equation}
	where $c_3>0$ is a constant not depending on $n$ or $\mathcal{B}$.

Lastly, we will give an upper bound on the  covering number by the VC dimension of $\mathcal{F}_n$ through its parameters. Denote the pseudo dimension of $\mathcal{F}_n$ by ${\rm Pdim}(\mathcal{F}_n)$,
by Theorem 12.2 in \cite{anthony1999}, for $2n\geq {\rm Pdim}(\mathcal{F}_n)$,
$$\mathcal{N}_{2n}(\frac{1}{n},\Vert \cdot\Vert_\infty,\mathcal{F}_n)\leq\Big(\frac{4e\mathcal{B}n^2}{{\rm Pdim}(\mathcal{F}_n)}\Big)^{{\rm Pdim}(\mathcal{F}_n)}.$$
Moreover, based on Theorem 3 and 6 in \cite{bartlett2019nearly}, there exist universal constants $c$, $C$ such that
	$$c\cdot\mathcal{S}\mathcal{D}\log(\mathcal{S}/\mathcal{D})\leq{\rm Pdim}(\mathcal{F}_n)\leq C\cdot\mathcal{S}\mathcal{D}\log(\mathcal{S}).$$
	Combining the upper bound of the covering number and the pseudo dimension with (\ref{bound5}), we have
	\begin{equation} \label{bound6}
		\mathcal{R}(\hat{f}_n)\leq c_4\mathcal{B}^5\frac{\mathcal{S}\mathcal{D}\log(\mathcal{S})(\log n)^5}{n}+ 2\Vert f^*_n-f_0\Vert^2_{L^2(\nu)},
	\end{equation}
	for some constant $c_4>0$  not depending on $n$, $d$, $\mathcal{B}$, $\mathcal{S}$ or $\mathcal{D}$. Therefore,  (\ref{oracle}) follows.
This completes the proof of Lemma \ref{lemma1}.

}
\end{proof}

\subsection{Proof of Theorem \ref{thm_apx}}
	This approximation result  improves the prefactor in $d$ of the network width in Theorem 2.2 in \cite{lu2020deep}. 	The main idea of our proof is to approximate the Taylor expansion of H\"older smooth $f$. By Lemma A.8 in \cite{petersen2018optimal}, for any $x,x_0\in[0,1]^d$, we have
	$$\Big\vert f(x)-\sum_{\Vert\alpha\Vert_1\le s}\frac{\partial^\alpha f(x_0)}{\alpha!}(x-x_0)^\alpha\Big\Vert\leq d^s\Vert x-x_0\Vert^\beta_2.$$
	This reminder term could be well controlled when the approximation to Taylor expansion in implemented in a fairly small local region.
	Then we can focus on the approximation of the Taylor expansion locally.
The proof is divided into three parts:
	\begin{itemize}
		\item Partition $[0,1]^d$ into small cubes $\bigcup_\theta Q_\theta$, and construct a network $\psi$ that approximately maps each $x\in Q_\theta$ to a fixed point $x_\theta\in Q_\theta$. Hence, $\psi$ approximately discretize $[0,1]^d$.
		\item For any multi-index $\alpha$,  construct a network $\phi_\alpha$ that approximates the Taylor coefficient $x\in Q_\theta \mapsto \partial^\alpha f(\psi(x_\theta))$. Once $[0,1]^d$ is discretized, the approximation is reduced to a data fitting problem.
		\item Construct a network $P_\alpha(x)$ to approximate the polynomial $x^\alpha:=x_1^{\alpha_1}\ldots x_d^{\alpha_d}$ where $x=(x_1,\ldots,x_d)^\top\in\mathbb{R}^d$ and $\alpha=(\alpha_1,\ldots,\alpha_d)^\top\in\mathbb{N}^d_0$. In particular, we can construct a network $\phi_\times(\cdot,\cdot)$ approximating the product function of two scalar inputs.
	\end{itemize}
	Then our construction of neural network can be written in the form,
	$$\phi(x)=\sum_{\Vert\alpha\Vert_1\le s}\phi_\times\Big( \frac{\phi_\alpha(x)}{\alpha!},P_\alpha(x-\psi(x))\Big).$$

	\begin{proof}
		Without loss of generality, we assume the H\"older norm of $f$ is 1, i.e. $f\in\mathcal{H}^\beta([0,1]^d,1)$. The reason is that we can always approximate $f/B_0$ firstly by a network $\phi$ with approximation error $\epsilon$, then the scaled network $B_0\phi$ will approximate $f$ with error no more than $\epsilon B_0$. Besides, it is a trivial case when the H\"older norm of $f$ is 0.
		Firstly, when $\beta>1$, we divide the proof into three steps as follows.
		
		\textbf{Step 1:} Discretization.\\
		Given $K\in\mathbb{N}^+$ and $\delta\in(0,1/(3K)]$, for each $\theta=(\theta_1,\ldots,\theta_d)\in\{0,1,\ldots,K-1\}^d$, we define
		$$Q_\theta:=\Big\{x=(x_1,\ldots,x_d):x_i\in[\frac{\theta_i}{K},\frac{\theta_i+1}{K}-\delta\cdot1_{\theta_i<K-1}],i=1,\ldots,d\Big\}.$$
		Note that $[0,1]^d\backslash\Omega([0,1]^d,K,\delta)=\bigcup_{\theta}Q_\theta$. By the definition of $Q_\theta$, the region $[0,1]^d$ is approximately divided into hypercubes. By Lemma \ref{lemmab1}, there exists a ReLU network $\psi_1$ with width $4\lfloor N^{1/d}\rfloor+3$ and depth $4M+5$ such that
		$$\psi_1(x)=\frac{k}{K},\quad {\rm if\ } x\in[\frac{k}{K},\frac{k+1}{K}-\delta\cdot 1_{\{k<K-1\}}], k=0,1,\ldots,K-1.$$
We define
		$$\psi(x):=(\psi_1(x_1),\ldots,\psi_1(x_d)),\quad x=(x_1,\ldots,x_d)\in\mathbb{R}^d.$$
		Then we have $\psi(x)={\theta}/{K}:=({\theta_1}/{K},\ldots,{\theta_d}/{K})^\top$ for $x\in Q_\theta$ and $\psi$ is a ReLU network with width $d(4\lfloor N^{1/d}\rfloor+3)$ and depth $4M+5$.
		
		\textbf{Step 2:} Approximation of Taylor coefficients.\\
		Since $\theta\in\{0,1,\ldots,K-1\}^d$ is one-to-one correspondence to $i_\theta:=\sum_{j=1}^d\theta_j K^{j-1}\in\{0,1\ldots,K^d-1\}$, we define
		$$\psi_0(x):=(K,K^2,\ldots,K^d)\cdot\psi(x)=\sum_{j=1}^d\psi_1(x_j)K^j,\quad x\in\mathbb{R}^d,$$
		then
		$$\psi_0(x)=\sum_{j=1}^d\theta_jK^{j-1}=i_\theta,\quad {\rm if\ }x\in Q_\theta, \theta\in\{0,1,\ldots,K-1\}^d,$$
		where $\psi_0(x)$ has width $d(4\lfloor N^{1/d}\rfloor+3)$ and depth $4M+5$.
		For any $\alpha\in\mathbb{N}^d_0$ satisfying $\Vert\alpha\Vert_1\leq s$ and each $i=i_\theta\in\{0,1,\ldots,K^d-1\}$, we denote $\xi_{\alpha,i}:=(\partial^\alpha f(\theta/K)+1)/2\in[0,1].$ Since $K^d\leq N^2M^2$, by Lemma \ref{lemmab2}, there exists a ReLU network $\varphi_\alpha$ with width $16(s+1)(N+1)\lceil \log_2(8N)\rceil$ and depth $5(M+2)\lceil\log_2(4M)\rceil$ such that
		$$\vert \varphi_\alpha(i)-\xi_{\alpha,i}\vert\leq(NM)^{-2(s+1)},$$
		for all $i\in\{0,1,\ldots,K^d-1\}.$
		We define
		$$\phi_\alpha(x):=2\varphi_\alpha(\psi_0(x))-1\in[-1,1],\quad x\in\mathbb{R}^d.$$
		Then $\phi_\alpha$ can be implemented by a network with width $16d(s+1)(N+1)\lceil \log_2(8N)\rceil\leq 32d(s+1)N\lceil \log_2(8N)\rceil$ and depth $5(M+2)\lceil\log_2(4M)\rceil+4M+5\leq15M\lceil\log_2(8M)\rceil$. And we have for any $\theta\{0,1,\ldots,K-1\}^d$, if $x\in Q_\theta$,
		\begin{equation} \label{apx1}
			\vert \phi_\alpha(x)-\partial^\alpha f(\theta/K)\vert=2\vert \varphi_\alpha(i_\theta)-\xi_{\alpha,i_\theta}\vert\leq2(NM)^{-2(s+1)}.
		\end{equation}
		
		\textbf{Step 3:} Approximation of $f$ on $\bigcup_{\theta\in\{0,1,\ldots,K-1\}^d} Q_\theta$.\\
		Let $\varphi(t)=\min\{\max\{t,0\},1\}=\sigma(t)-\sigma(t-1)$ for $t\in\mathbb{R}$ where $\sigma(\cdot)$ is the ReLU activation function. With a slightly abuse of the notation, we extend its definition to $\mathbb{R}^d$ coordinate-wisely, i.e., $\varphi:\mathbb{R}^d\to[0,1]^d$ and $\varphi(x)=x$ for any $x\in[0,1]^d$. By Lemma \ref{lemmab3}, there exists a ReLU network with width $9N+1$ and depth $2(s+1)M$ such that for any $t_1,t_2\in[-1,1]$,
		\begin{gather}
			\label{apx2}
			\vert t_1t_2-\phi_\times(t_1,t_2)\vert\leq 24N^{-2(s+1)M},
		\end{gather}
		By Lemma \ref{lemmab4}, for any $\alpha\in\mathbb{N}^d_0$ with $ \alpha\Vert_2\leq s$, there exists a ReLU network $P_\alpha$ with width $9N+s+8$ and depth $7(s+1)^2M$ such that
$P_\alpha(x)\in[-1,1]$ and
		\begin{gather}
			\label{apx4}
			\vert P_\alpha(x)-x^\alpha\vert\leq9(s+1)(N+1)^{-7(s+1)M}.
		\end{gather}
		
		For any $x\in Q_\theta$, $\theta\in\{0,1,\ldots,K-1\}^d$, we can now approximate the Taylor expansion of $f(x)$ by combined sub-networks. Thanks to Lemma A.8 in \cite{petersen2018optimal}, we have the following error control for $x\in Q_\theta$,
		\begin{equation}\label{apx6}
			\Big\vert f(x)-f(\frac{\theta}{K})-\sum_{1\leq\Vert\alpha\Vert_1\leq s}\frac{\partial^\alpha f(\frac{\theta}{K})}{\alpha!}(x-\frac{\theta}{K})^\alpha\Big\vert\leq d^s\Vert x-\frac{\theta}{K}\Vert_2^\beta\leq d^{s+\beta/2}K^{-\beta}.
		\end{equation}
		Motivated by this, we define
		\begin{align*}
			\tilde{\phi}_0(x)&:=\phi_{\textbf{0}_d}(x)+\sum_{1\leq \Vert\alpha\Vert_1\leq s} \phi_\times\Big( \frac{\phi_\alpha(x)}{\alpha!},P_\alpha(\varphi(x)-\phi(x))\Big),\\
			{\phi}_0(x)&:=\sigma(\tilde{\phi}_0(x)+1)-\sigma(\tilde{\phi}_0(x)-1)-1\in[-1,1],
		\end{align*}
		where $\textbf{0}_d=(0,\ldots,0)\in\mathbb{N}^d_0$. Observe that the number of terms in the summation can be bounded by
		$$\sum_{\alpha\in\mathbb{N}^d_0,\Vert\alpha\Vert_1\le s}1=\sum_{j=0}^s\sum_{\alpha\in\mathbb{N}^d_0,\Vert\alpha\Vert_1=j}1\leq\sum_{j=0}^sd^s\leq(s+1)d^s.$$
		Recall that width and depth of $\varphi$ is $(2d,1)$, width and depth of $\psi$ is $(d(4\lfloor N^{1/d}\rfloor+3),4M+5)$, width and depth of $P_\alpha$ is $(9N+s+8,7(s+1)^2M)$, width and depth of $\phi_\alpha$ is width ($16d(s+1)(N+1)\lceil \log_2(8N)\rceil$,$5(M+2)\lceil\log_2(4M)\rceil+4M+5$) and width and depth of $\phi_\times$ is $(9N+1,2(s+1)M)$. Hence, by our construction, $\phi_0$ can be implemented by a neural network with width $38(s+1)^2d^{s+1}N\lceil\log_2(8N)\rceil$ and depth $21(s+1)^2M\lceil\log_2(8M)\rceil$. The approximation error $\vert f(x)-\phi_0(x)\vert$ can be bounded as follows. For any $x\in Q_\theta$, $\varphi(x)=x$ and $\psi(x)=\theta/K$. Then by the triangle inequality and (\ref{apx6}),
		\begin{align*}
			&\vert f(x)-\phi_0(x)\vert\leq \vert f(x)-\tilde{\phi}_0(x)\vert\\
			\leq& \vert f(\theta/K)- \phi_{\textbf{0}_d}(x)\vert+d^{s+\beta/2}K^{-\beta}\\
			&+\sum_{1\leq\Vert\alpha\Vert_1\leq s}\Big\vert \frac{\partial^\alpha f(\theta/K)}{\alpha!}(x-\theta/K)^\alpha-\phi_\times\big( \frac{\phi_\alpha(x)}{\alpha!},P_\alpha(x-\theta/K)\big)\Big\vert\\
			=&d^{s+\beta/2}\lfloor (MN)^{2/d}\rfloor^{-\beta}+\sum_{\Vert\alpha\Vert_1\leq s}\mathcal{E}_\alpha,
		\end{align*}
		where we denote $\mathcal{E}_\alpha=\Big\vert \frac{\partial^\alpha f(\theta/K)}{\alpha!}(x-\theta/K)^\alpha-\phi_\times\big( \frac{\phi_\alpha(x)}{\alpha!},P_\alpha(x-\theta/K)\big)\Big\vert$ for each $\alpha\in\mathbb{N}^d_0$ with $\Vert\alpha\Vert_1\leq s$.
		Using the inequality $\vert t_1t_2-\phi_\times(t_3,t_4)\vert\leq\vert t_1t_2-t_3t_2\vert+\vert t_3t_2-t_3t_4\vert+\vert t_3t_4-\phi_\times(t_3,t_4)\vert\leq \vert t_1-t_3\vert+\vert t_2-t_4\vert+\vert t_3t_4-\phi_\times(t_3,t_4)\vert$ for any $t_1,t_2,t_3,t_4\in[-1,1]$, and by (\ref{apx1}), (\ref{apx2}) and (\ref{apx4}), for $1\leq\Vert\alpha\Vert_1\leq s$ we have
		\begin{align*}
			\mathcal{E}_\alpha\leq&\frac{1}{\alpha!}\vert\partial^\alpha f(\theta/K)-\phi_\alpha(x)\vert+\vert(x-\theta/K)^\alpha-P_\alpha(x-\theta/K)\vert\\
			&+\vert\frac{\phi_\alpha(x)}{\alpha!}P_\alpha(x-\theta/K)-\phi_\times\Big(\frac{\phi_\alpha(x)}{\alpha!},P_\alpha(x-\theta/K)\Big)\\
			\leq&2(NM)^{-2(s+1)}+9(s+1)(N+1)^{-7(s+1)M}+6N^{-2(s+1)M}\\
			\leq&(9s+17)(NM)^{-2(s+1)}.
		\end{align*}
		It is easy to check that the bound is also true when $\Vert \alpha\Vert_1=0$ and $s=0$.  Therefore,
		\begin{align*}
			\vert f(x)-\phi_0(x)\vert &\leq \sum_{1\leq \Vert\alpha\Vert_1\leq s} (9s+17)(NM)^{-2(s+1)}+d^{s+\beta/2}(NM)^{-2\beta/d}\\
			&\leq (s+1)d^s(9s+17)(NM)^{-2(s+1)}+d^{s+\beta/2}(NM)^{-2\beta/d}\\
			&\leq 18(s+1)^2d^{s+\beta/2}(NM)^{-2\beta/d},
		\end{align*}
		for any $x\in\bigcup_{\theta\in\{0,1,\ldots,K-1\}^d}Q_\theta$. And for $f\in\mathcal{H}^\beta([0,1]^d,B_0)$, by approximate $f/B_0$ firstly, we know there exists a function implemented by a neural network with the same width and depth as  $\phi_0$, such that
		$$\vert f(x)-\phi_0(x)\vert\leq 18B_0(s+1)^2d^{s+\beta/2}(NM)^{-2\beta/d},$$
		for any $x\in\bigcup_{\theta\in\{0,1,\ldots,K-1\}^d}Q_\theta$.
		
		Lastly, when $0<\beta\le1$, $f$ is a H\"older continuous function with order $\beta$ and constant H\"older $B_0$,
		then by Theorem 1.1 in \cite{shen2019deep}, there exists a function $\phi_0$ which is implemented by a neural network with width $\max\{4d\lfloor N^{1/d}\rfloor+3d,12N+8\}$ and depth $12M+14$, such that
		$$\vert f(x)-\phi_0(x)\vert\leq 18\sqrt{d}B_0(NM)^{-2\beta/d},$$
		for any $x\in\bigcup_{\theta\in\{0,1,\ldots,K-1\}^d}Q_\theta$.
		Combining the results for $\beta\in(0,1]$ and $\beta>1$, we have for $f\in\mathcal{H}^\beta([0,1]^d,B_0)$, there exists a function $\phi_0$ implemented by a neural network with width $38(s+1)^2d^{s+1}N\lceil\log_2(8N)\rceil$ and depth $21(s+1)^2M\lceil\log_2(8M)\rceil$ such that
			$$\vert f(x)-\phi_0(x)\vert\leq 18B_0(s+1)^2d^{s+\beta\vee1/2}(NM)^{-2\beta/d},$$
		for any $x\in\bigcup_{\theta\in\{0,1,\ldots,K-1\}^d}Q_\theta$ where $s=\lfloor\beta\rfloor$.
		
	\end{proof}

\subsection{Proof of Corollary \ref{thm_apx1}}
	
	  We prove 
Corollary \ref{thm_apx1} based on Theorem \ref{thm_apx}.
	  \begin{proof}
	  	Let $\mathcal{E}=18B_0(s+1)^2d^{s+\beta/2}(NM)^{-2\beta/d}$. We construct a neural network $\phi$ that uniformly approximates $f$ on $[0,1]^d$. To present the construction, we denote ${\rm mid}(t_1,t_2,t_3)$ as the function that returns the middle value of three inputs $t_1,t_2,t_3\in\mathbb{R}$. It is easy to check that
	  	$$\max\{t_1,t_2\}=\frac{1}{2}(\sigma(t_1+t_2)-\sigma(-t_1-t_2)+\sigma(t_1-t_2)+\sigma(t_2-t_1)).$$
	  	Thus $\max\{t_1,t_2,t_3\}=\max\{\max\{t_1,t_2\},\sigma(t_3)-\sigma(-t_3)\}$ can be implemented by a ReLU network
	  	with width 6 and depth 2. Similar construction holds for $\min\{t_1,t_2,t_3\}$. Since
	  	$${\rm mid}(t_1,t_2,t_3)=\sigma(t_1+t_2+t_3)-\sigma(-t_1-t_2-t_3)-\max\{t_1,t_2,t_3\}-\min\{t_1,t_2,t_3\},$$
	  	the function ${\rm mid}(\cdot,\cdot,\cdot)$ can be implemented by a ReLU network with width 14 and depth 2.
	  	Let $\{e_i\}_{i=1}^d$ be the standard orthogonal basis in $\mathbb{R}^d$, we inductively define
	  	$$\phi_i(x):={\rm mid}(\phi_{i-1}(x-\delta e_i),\phi_{i-1}(x),\phi_{i-1}(x+\delta e_i))\in[-1,1],\ i=1,\ldots,d,$$
	  	where $\phi_0$ is defined in the proof of Theorem \ref{thm_apx}.
	  	Then $\phi_d$ can be implemented by a ReLU network with width $38(s+1)^23^dd^{s+1}N\lceil\log_2(8N)\rceil$ and depth $21(s+1)^2M\lceil\log_2(8M)\rceil+2d$ recalling that $\phi_0$ has width $38(s+1)^2d^{s+1}N\lceil\log_2(8N)\rceil$ and depth $21(s+1)^2M\lceil\log_2(8M)\rceil$. Denote $Q(K,\delta):=\bigcup_{k=0}^{K-1}[\frac{k}{K},\frac{k+1}{K}-\delta\cdot 1_{k<K-1}]$ and define
	  	$$E_i:=\{(x_1,\ldots,x_d)\in[0,1]^d:x_j\in Q(K,\delta),j>i\},$$
	  	for $i=0,\ldots,d$. Then $E_0=\bigcup_{\theta\in\{0,1,\ldots,K-1\}^d} Q_\theta$ and $E_d=[0,1]^d$. We assert that
	  	$$\vert\phi_i(x)-f(x)\vert\le \mathcal{E}+iB_0\delta^{\beta\wedge1},\ \forall x\in E_i, i=0,\ldots,d,$$
	  	 where $a\wedge b:=\min\{a,b\}$ for $a,b\in\mathbb{R}$.
	  	
	  	 We prove the assertion by induction. Firstly, it is true for $i = 0$ by construction. Assume the assertion is true for some $i$, we will prove that it is also holds for $i + 1$. Note that for any $x\in E_{i+1}$, at least two of $x-\delta e_{i+1}$, $x$ and $x+ \delta e_{i+1}$ are in $E_i$. Therefore,  by assumption and the inequality $\vert f(x)-f(x\pm\delta e_{i+1})\vert\leq B_0\delta^{\beta\wedge1}$, at least two of the following inequalities hold,
	  	 \begin{align*}
	  	 	\vert \phi_i(x-\delta e_{i+1})-f(x)\vert\leq &\vert \phi_i(x-\delta e_{i+1})-f(x-\delta e_{i+1})\vert +B_0\delta^{\beta\wedge1}\leq \mathcal{E}+(i+1)B_0\delta^{\beta\wedge1},\\
	  	 		\vert \phi_i(x)-f(x)\vert\leq & \mathcal{E}+iB_0\delta^{\beta\wedge1},\\
	  	 		\vert \phi_i(x+\delta e_{i+1})-f(x)\vert\leq & \vert \phi_i(x+\delta e_{i+1})-f(x+\delta e_{i+1})\vert +B_0\delta^{\beta\wedge1}\leq \mathcal{E}+(i+1)B_0\delta^{\beta\wedge1}.
	  	  	 \end{align*}
	  	 In other words, at least two of $\phi_i(x-\delta e_{i+1})$, $\phi_i(x)$ and $\phi_i(x+\delta e_{i+1})$ are in the interval $[f(x)-\mathcal{E}-(i+1)B_0\delta^{\beta\wedge1},f(x)+\mathcal{E}+(i+1)B_0\delta^{\beta\wedge1}].$ Hence, their middle value $\phi_{i+1}(x)={\rm mid}(\phi_i(x-\delta e_{i+1},\phi_i(x),\phi_i(x+\delta e_{i+1})))$ must be in the same interval, which means
	  	 $$\vert \phi_{i+1}(x)-f(x)\vert\leq\mathcal{E}+(i+1)B_0\delta^{\beta\wedge1}.$$
	  	 So the assertion is true for $i+1$. We take $\delta=3K^{-\beta\vee1}$, then
	  	\begin{equation*}
	  	 	\delta^{\beta\wedge1}=\Big( \frac{1}{3K^{\beta\vee1}}\Big)^{\beta\wedge1}=
	  	 \Big\{
	  	 	\begin{array}{cc}
	  	 		\frac{1}{3}K^{-\beta} & \beta\ge1, \\
	  	 		(3K)^{-\beta} & \beta<1,
	  	 	\end{array}
   	\end{equation*}
	  	 and $K=\lfloor(NM)^{2/d}\rfloor$. Since $E_d=[0,1]^d$, let $\phi:=\phi_d$, we have
	  	 \begin{align*}
	  	 	\Vert \phi-f\Vert_{L^\infty([0,1]^d)}\leq&\mathcal{E}+dB_0\delta^{\beta\wedge1}\\
	  	 	\leq& 18B_0(s+1)^2d^{s+(\beta\vee1)/2}(NM)^{-2\beta/d}+dB_0(NM)^{-2\beta/d}\\
	  	 	\leq&19B_0(s+1)^2d^{s+(\beta\vee1)/2}(NM)^{-2\beta/d},
	  	 \end{align*}
  	 where $s=\lfloor\beta\rfloor$, which completes the proof.
	  \end{proof}

\subsection{Proof of Theorem \ref{thm2}}
	\begin{proof}
	Let $K\in\mathbb{N}^+$ and $\delta\in(0,1/K)$, define a region $\Omega([0,1]^d,K,\delta)$ of $[0,1]^d$ as $$\Omega([0,1]^d,K,\delta)=\bigcup_{i=1}^d\{x=[x_1,x_2,...,x_d]^T:x_i\in\bigcup_{k=1}^{K-1}(k/K-\delta,k/K)\}.$$
	
	By Theorem \ref{thm_apx}, for any $M, N\in\mathbb{N}^+$, there exists a function
	$f^*_n\in\mathcal{F}_n=\mathcal{F}_{\mathcal{D},\mathcal{W},\mathcal{U},\mathcal{S},\mathcal{B}}$ with  width $\mathcal{W}=38(s+1)^2d^{s+1}N\lceil\log_2(8N)\rceil$ and depth $\mathcal{D}=21(s+1)^2M\lceil\log_2(8M)\rceil$,	
	such that
	$$ \vert f^*_n(x)-f_0(x)\vert\leq 18B_0(s+1)^2d^{s+(\beta\vee1)/2}(NM)^{-2\beta/d},$$
	for any $x\in[0,1]^d\backslash \Omega([0,1]^d,K,\delta)$ where $K=\lfloor N^{1/d}\rfloor^2\lfloor M^{2/d}\rfloor$ and $\delta$ is an arbitrary number in $(0,\frac{1}{3K}]$. Note that the Lebesgue measure of $\Omega([0,1]^d,K,\delta)$ is no more than $dK\delta$ which can be arbitrarily small if $\delta$ is arbitrarily small. Since $\nu$ is absolutely continuous with respect to the Lebesgue measure,  we have
	$$\Vert f^*_n-f_0\Vert^2_{L^2(\nu)}\leq18^2B_0^2(s+1)^4d^{2s+\beta\vee1}(NM)^{-4\beta/d}.$$
	By Lemma \ref{lemma1}, finally we have
	$$\mathbb{E} \Vert f^*_n-f_0\Vert^2_{L^2(\nu)}\leq C\mathcal{B}^2\frac{\mathcal{S}\mathcal{D}\log(\mathcal{S})(\log n)^3}{n}+324B_0^2(s+1)^4d^{2s+\beta\vee1}(NM)^{-4\beta/d},$$
	where $C$ does not depend on $n,d,N,M,s,\beta,B_0,\mathcal{D},\mathcal{B}$ or $\mathcal{S}$, and $s=\lfloor\beta\rfloor$. This completes the proof of Theorem \ref{thm2}.
		\end{proof}

	\subsection{Proof of Corollary \ref{c1}}
	We prove Corollary \ref{c1}. Corollaries \ref{c2} and \ref{c3} can be proved similarly.
	\begin{proof}
		Under the assumptions in Theorem \ref{thm2},  for any $N, M\in\mathbb{N}^+$, the function class of ReLU multi-layer perceptrons $\mathcal{F}_n=\mathcal{F}_{\mathcal{D},\mathcal{W},\mathcal{U},\mathcal{S},\mathcal{B}}$ with width $\mathcal{W}=38(s+1)^2d^{s+1}N\lceil\log_2(8N)\rceil$ and depth $\mathcal{D}=21(s+1)^2M\lceil\log_2(8M)\rceil$, the prediction error of the ERM
		$\hat{f}_n$ satisfies
		$$\mathbb{E} \Vert \hat{f}_n-f_0\Vert^2_{L^2(\nu)}\leq C\mathcal{B}^2
		(\log n)^3\, \frac{1}{n}\mathcal{S}\mathcal{D}\log(\mathcal{S}) +324B_0^2(s+1)^4d^{2s+\beta\vee1}(NM)^{-4\beta/d},$$
		for $2n \ge \text{Pdim}(\mathcal{F}_n)$, where $C>0$ is a constant not depending on $n,d,\mathcal{B},\mathcal{S},\mathcal{D},B_0$,\\$\beta,s,r,N$ or $M$.
		
		For deep with fixed width networks, given any $N\in\mathbb{N}^+$, the network width is fixed
		$$\mathcal{W}=38(s+1)^2d^{s+1}N\lceil\log_2(8N)\rceil.$$
		Recall that for any multilayer neural network in $\mathcal{F}_n$, its parameters naturally satisfy
		\begin{equation*}
			\max\{\mathcal{W},\mathcal{D}\}\leq
			\mathcal{S}\leq
			\mathcal{W}(d+1)+(\mathcal{W}^2+\mathcal{W})(\mathcal{D}-1)+\mathcal{W}+1\leq 2\mathcal{W}^2\mathcal{D}.
		\end{equation*}
		Then by plugging $\mathcal{S}\leq2\mathcal{W}^2\mathcal{D}$ and $\mathcal{D}=21(s+1)^2M\lceil\log_2(8M)\rceil$, we have
		\begin{align*}
			\mathbb{E} \Vert \hat{f}_n-f_0\Vert^2_{L^2(\nu)}&\leq C\mathcal{B}^2
			(\log n)^3\, \frac{1}{n}\mathcal{W}^2(M\lceil\log_2(8M)\rceil)^2\log(221(s+1)^2M\lceil\log_2(8M)\rceil\mathcal{W}^2)\\
			&\qquad +324B_0^2(s+1)^4d^{2s+\beta\vee1}(NM)^{-4\beta/d}.
		\end{align*}
		Note that the first term on the right hand side is increasing in $M$ while the second term is decreasing in $M$. To achieve the optimal rate with respect to $n$, we need a balanced choice of $M$ such  that
		$$(\log n)^3M^2\log(M)^2/n\approx M^{-4\beta/d},$$
		in terms of their order. This leads to the choice of $M=\lfloor n^{d/2(d+2\beta)}\rfloor$ and the network depth and size where
		\begin{eqnarray*}
			\mathcal{W}&=& 38(s+1)^2d^{s+1}N\lceil\log_2(8N)\rceil, \\
			\mathcal{S}&= &O(n^{d/2(d+2\beta)}(\log n)),
		\end{eqnarray*}
		the ERM $\hat{f}_n \in\arg\min_{f\in\mathcal{F}_n}L_n(f) $ satisfies
		\begin{align*}
			\mathbb{E} \Vert \hat{f}_n-f_0\Vert^2_{L^2(\nu)}\leq& \Big\{c_1\mathcal{B}^2(\log n)^{5}+324B_0^2d^{2s+\beta\vee1}N^{-4\beta/d}\Big\}(s+1)^4n^{-2\beta/(d+2\beta)},\\
			\leq&c_2\mathcal{B}^2N^{-4\beta/d}(s+1)^{4}d^{2s+\beta\vee1}n^{-2\beta/(d+2\beta)}(\log n)^{5},
		\end{align*}
		for $2n \ge \text{Pdim}(\mathcal{F}_n)$, where $c_1,c_2>0$ are constants which do not depend on $n,\mathcal{B},B_0,\beta,s$ or $N$. This completes the proof.
	\end{proof}

		{
	\subsection{Proof of Theorem \ref{thm3}}
	\begin{proof}
		We project the data to a low-dimensional space and then use DNN to do approximation the low-dimensional function where the idea  is similar to that of Theorem 1.2 in \cite{shen2019deep}.
		Based on Theorem 3.1 in \cite{baraniuk2009random}, there exists a linear projector $A\in\mathbb{R}^{d_\delta\times d}$ that maps a low-dimensional manifold in a high-dimensional space to a low-dimensional space nearly preserving the distance. Specifically, there exists a matrix $A\in\mathbb{R}^{d_\delta\times d}$ such that
		$AA^T=(d/d_\delta)I_{d_\delta}$ where $I_{d_\delta}$ is an identity matrix of size $d_\delta\times d_\delta$, and
		$$(1-\delta)\Vert x_1-x_2\Vert_2\leq\Vert Ax_1-Ax_2\Vert_2\leq(1+\delta)\Vert x_1-x_2\Vert_2,$$
		for any $x_1,x_2\in\mathcal{M}.$ And it is easy to check
		$$A(\mathcal{M}_\rho)\subseteq A([0,1]^d)\subseteq [-\sqrt{\frac{d}{d_\delta}},\sqrt{\frac{d}{d_\delta}}]^{d_\delta}.$$
		
		Note that for any $z\in A(\mathcal{M})$, there exists a unique $x\in\mathcal{M}$ such that $Ax=z$. To prove this, let $x^\prime\in\mathcal{M}$ be another point on $\mathcal{M}$ satisfying $Ax^\prime=z$, then $(1-\delta)\Vert x-x^\prime\Vert_2\leq\Vert Ax-Ax^\prime\Vert_2\leq(1+\delta)\Vert x-x^\prime\Vert_2$ implies that $\Vert x-x^\prime\Vert_2=0$.
		Then for any $z\in A(\mathcal{M})$, define $x_z=\mathcal{SL}(\{x\in\mathcal{M}: Ax=z\})$ where $\mathcal{SL}(\cdot)$ is a set function which returns a unique element of a set. Note that if $Ax=z$ where $x\in\mathcal{M}$ and $z\in A(\mathcal{M})$, then $x=x_z$ by our argument since $\{x\in\mathcal{M}: Ax=z\}$ is a set with only one element when $z\in A(\mathcal{M})$. And we can see that $\mathcal{SL}:A(\mathcal{M})\to\mathcal{M}$ is a differentiable function with the norm of its derivative locates in $[1/(1+\delta),1/(1-\delta)]$, since
		$$\frac{1}{1+\delta}\Vert z_1-z_2\Vert_2\leq\Vert x_{z_1}-x_{z_2}\Vert_2\leq\frac{1}{1-\delta}\Vert z_1-z_2\Vert_2,$$
		for any $z_1,z_2\in A(\mathcal{M})\subseteq E$ where $E:=[-\sqrt{{d}/{d_\delta}},\sqrt{{d}/{d_\delta}}]^{d_\delta}$.
		For the high-dimensional function $f_0: [0,1]^{d}\to\mathbb{R}^1$, we define its low-dimensional representation $\tilde{f}_0:\mathbb{R}^{d_\delta}\to\mathbb{R}^1$ by
		$$\tilde{f}_0(z)=f_0(x_z), \quad {\rm for\ any} \ z\in A(\mathcal{M})\subseteq\mathbb{R}^{d_\delta}.$$
		Recall that $f_0\in\mathcal{H}^\beta([0,1]^d,B_0)$, then $\tilde{f}_0\in\mathcal{H}^\beta(A(\mathcal{M}),B_0/(1-\delta)^\beta)$. Note that $\mathcal{M}$ is compact and $A$ is a linear mapping, then by the extended version of Whitney' extension theorem in \cite{fefferman2006whitney}, there exists a function $\tilde{F}_0\in\mathcal{H}^\beta(E,B_0/(1-\delta)^\beta)$ such that $\tilde{F}_0(z)=\tilde{f}_0(z)$ for any $z\in A(\mathcal{M})$. With $E=[-\sqrt{{d}/{d_\delta}},\sqrt{{d}/{d_\delta}}]^{d_\delta}$, by Theorem \ref{thm_apx}, for any $N,M\in\mathbb{N}^+$, there exists a function $\tilde{f}_n: \mathbb{R}^{d_\delta}\to\mathbb{R}^1$ implemented by a ReLU FNN with width $\mathcal{W}=38(s+1)^2d_\delta^{s+1}N\lceil\log_2(8N)\rceil$ and depth $\mathcal{D}=21(s+1)^2M\lceil\log_2(8M)\rceil$ such that
		$$\vert \tilde{f}_n(z)-\tilde{F}_0(z)\vert\leq 36\frac{B_0}{(1-\delta)^\beta}(s+1)^2d^{1/2}d_\delta^{3s/2}(NM)^{-2\beta/d_\delta},$$
		for all $z\in E\backslash\Omega(E)$ where $\Omega(E)$ is a subset of $E$ with an arbitrarily small Lebesgue measure as well as $\Omega:=\{x\in\mathcal{M}_\rho: Ax\in\Omega(E)\}$ does.
			
		If we define $f^*_n=\tilde{f}_n\circ A$ which is $f^*_n(x)=\tilde{f}_n(Ax)$ for any $x\in[0,1]^d$, then $f^*_n\in\mathcal{F}_{\mathcal{D},\mathcal{W},\mathcal{U},\mathcal{S},\mathcal{B}}$ is also a ReLU FNN with the same parameter as $\tilde{f}_n$. For any $x\in\mathcal{M}_\rho\backslash\Omega$ and $z=Ax$, there exists a $\tilde{x}\in\mathcal{M}$ such that $\Vert x-\tilde{x}\Vert_2\leq \rho$, then
		\begin{align*}
			&\vert f^*_n(x)-f_0(x)\vert=\vert \tilde{f}_n(Ax)-\tilde{F}_0(Ax)+\tilde{F}_0(Ax)-\tilde{F}_0(A\tilde{x})+\tilde{F}_0(A\tilde{x})-f_0(x)\vert\\
			&\leq\vert \tilde{f}_n(Ax)-\tilde{F}_0(Ax)\vert+\vert\tilde{F}_0(Ax)-\tilde{F}_0(A\tilde{x})\vert+\vert\tilde{F}_0(A\tilde{x})-f_0(x)\vert\\
			&\leq 36\frac{B_0}{(1-\delta)^\beta}(s+1)^2d^{1/2}d_\delta^{3s/2}(NM)^{-2\beta/d_\delta}+\frac{ B_0}{1-\delta}\Vert Ax-A\tilde{x}\Vert_2+\vert f_0(\tilde{x})-f_0(x)\vert\\
			&\leq 36\frac{B_0}{(1-\delta)^\beta}(s+1)^2d^{1/2}d_\delta^{3s/2}(NM)^{-2\beta/d_\delta}+\frac{\rho B_0}{1-\delta}\sqrt{\frac{d}{d_\delta}}+\rho B_0\\
			&=36\frac{B_0}{(1-\delta)^\beta}(s+1)^2d^{1/2}d_\delta^{3s/2}(NM)^{-2\beta/d_\delta}+\rho B_0\{(1-\delta)^{-1}\sqrt{{d}/{d_\delta}}+1\}\\
			&\leq (36+C_2)\frac{B_0}{(1-\delta)^\beta}(s+1)^2d^{1/2}d_\delta^{3s/2}(NM)^{-2\beta/d_\delta},
			\end{align*}
		where $C_2>0$ is a constant not depending on any parameter. The last inequality follows from $\rho\leq C_2(NM)^{-2\beta/d_{\delta}}(s+1)^2d^{1/2}d_\delta^{3s/2}\{\sqrt{{d}/{d_\delta}}+1-\delta\}^{-1}(1-\delta)^{1-\beta}$. Since  the probability measure $\nu$ of $X$ is absolutely continuous with respect to the Lebesgue measure, we have
		\begin{equation}\label{approxbound}
			\Vert f^*_n -f_0\Vert^2_{L^2(\nu)} \leq (36+C_2)^2\frac{B_0^2}{(1-\delta)^{2\beta}}(s+1)^4dd_\delta^{3s}(NM)^{-4\beta/d_\delta},
		\end{equation}
where $d_\delta=O(d_\mathcal{M}{\log(d/\delta)}/{\delta^2})$ is assumed
to satisfy $d_\delta \ll d$. By Lemma \ref{lemma1}, we have
\begin{align*}
		&\mathbb{E} \Vert \hat{f}_n-f_0\Vert^2_{L^2(\nu)}\\
&\leq C_1\mathcal{B}^2\frac{\mathcal{S}\mathcal{D}\log(\mathcal{S})
(\log n)^3}{n}+(36+C_2)^2\frac{B_0^2}{(1-\delta)^{2\beta}}(\lfloor\beta\rfloor+1)^4dd_\delta^{3\lfloor\beta\rfloor}(NM)^{-4\beta/d_\delta},
\end{align*}
where $C_1,C_2>0$ are constants that do not depend on $n,\mathcal{B},\mathcal{S},\mathcal{D},B_0,\beta,\delta,N$ or $M$, $\lfloor\beta\rfloor=s$ is the biggest integer strictly smaller than $\beta$.
This completes the proof of Theorem \ref{thm3}.

\end{proof}

{
\subsection{Proof of Theorem \ref{thm5}}
To facilitate the proof, we first briefly review manifolds, partition of unity, and function spaces defined on smooth manifolds. Details can be found in \citet{chen2019nonparametric}, \citet{tu2011manifolds},  \citet{lee2006riemannian}, \citet{federer1959curvature} and \citet{aamari2019estimating}.

\begin{definition}[Chart]
	Let $\mathcal{M}$ be a $d_{\mathcal{M}}$-dimensional Riemannian manifold isometrically embedded in $\mathbb{R}^d$. A chart for $\mathcal{M}$ is a pair $(U,\phi)$ such that $U\subset\mathcal{M}$ is open and $\phi: U\mapsto\mathbb{R}^{d_{\mathcal M}}$, where $\phi$ is a homeomorphism, i.e., bijective, $\phi$ and $\phi^{-1}$ are both continuous.
\end{definition}
We say two charts $(U,\phi)$ and $(V,\psi)$ on $\mathcal{M}$ are $C^k$ compatible if and only if the transition functions,
$$\phi\circ\psi^{-1}:\psi(U\cap V)\mapsto\phi(U\cap V)\quad{\rm and}\quad \psi\circ\phi^{-1}:\phi(U\cap V)\mapsto\psi(U\cap V)$$
are both $C^k$.

\begin{definition}[$C^k$ Atlas]
	A $C^k$ atlas for $\mathcal{M}$ is a collection of pairwise $C^k$ compatible charts $\{(U_i,\phi_i)\}_{i\in \mathcal{A}}$ such that $\bigcup_{i\in\mathcal{A}} U_i=\mathcal{M}$.
\end{definition}

\begin{definition}[Smooth manifold]
	A smooth manifold is a manifold together with a $C^\infty$ atlas.
\end{definition}

\begin{definition}[H\"older functions on $\mathcal{M}$]
	Let $\mathcal{M}$ be a $d_{\mathcal M}$-dimensional Riemannian manifold isometrically embedded in $\mathbb{R}^d$.  Let $\{(U_i,P_i)\}_{i\in \mathcal{A}}$ be an atlas of $\mathcal{M}$ where the $P_i's$ are orthogonal  projections onto tangent space. For a positive number $\beta>0$, a function $f:\mathcal{M}\mapsto\mathbb{R}$ belonging to H\"older class $\mathcal{H}^\beta(\mathcal{M},B_0)$ is $\beta$-H\"older smooth with constant $B_0$ if for each chart $(U_i,P_i)$ in the atlas, we have
	\begin{itemize}
		\item [1.] $f\circ P_i^{-1}\in C^s$ with  $\max_{\Vert\alpha\Vert_1\le s}\vert\partial^\alpha f (x)\vert\le B_0$ for any $x\in U_i$.
		\item [2.] For any $\Vert\alpha\Vert_1= s$ and $x,y\in U_i$,
		$$\sup_{x\not=y}\frac{\vert\partial^\alpha f(x)-\partial^\alpha f(y)\vert}{\Vert x-y\Vert_2^r}\le B_0,$$
		where $s$ is the largest integer strictly smaller than $\beta$ and $r=\beta-s$.
	\end{itemize}
\end{definition}

\begin{definition}[Partition of Unity, Definition 13.4 in \citet{tu2011manifolds}]
	A $C^\infty$ partition of unity on a manifold $\mathcal{M}$ is a collection of nonnegative $C^\infty$ functions $\rho_i:\mathcal{M}\mapsto\mathbb{R}^+$ for $i\in\mathcal{A}$ such that
		\begin{itemize}
		\item [1.] The collection of the supports, $\{{\rm supp}(\rho_i)\}_{i\in\mathcal{A}}$ is locally finite, i.e., every point on $\mathcal{M}$ has a neighborhood that meets only finitely many of ${\rm supp}(\rho_i)$'s.
		\item [2.] $\sum_{i\in\mathcal{A}} \rho_i=1$.
	\end{itemize}
\end{definition}
By Theorem 13.7 in \citet{tu2011manifolds}, a $C^\infty$ partition of unity always exists for a smooth manifold. This gives a decomposition $f=\sum_{i\in\mathcal{A}} f_i$ with $f_i=f\rho_i$ and each $f_i$ has the same regularity as $f$ since $f_i\circ\phi_i^{-1}=(f\circ\phi_i^{-1})\times(\rho_i\circ\phi_i^{-1})$ for a chart $(U_i,\phi_i)$. And the decomposition means that we can express $f$ as a sum of the $f_i$’s with each $f_i$ is only supported in a single chart.

Our approach builds on the methods of
\citet{schmidt2019deep, chen2019efficient} and \citet{chen2019nonparametric}
but there are some noteworthy new aspects:
(a) we apply linear coordinate maps instead of smooth coordinate maps,
 where the linear coordinate maps can be exactly
 represented by shallow ReLU  networks without error;
  (b)
   we do not require the smoothness index of each coordinate map and each function in the partition of unity to be no less than $\beta d/d_{\mathcal M}$,  which depends on the ambient dimension $d$ and can be large; (c) we apply our new approximation result when approximating the low-dimensional H\"older smooth functions on each projected chart, which leads to a better prefactor of error compared to most existing results


{\color{black}
\begin{proof}
We prove Theorem \ref{thm5} in  three steps: (1) we first construct an finite atlas that covers the manifold $\mathcal{M}$;
(2) we project each chart linearly to a $d_{\mathcal M}$-dimensional hypercube on which we approximate the low-dimensional H\"older smooth functions respectively; (3)  lastly, we combine the approximation results on all charts to get a error bound of the approximation on the whole manifold.

\textbf{Step 1:} Atlas Construction and Projection.\\
Let $B(x,r)$ denote the open Euclidean ball with radius $r>0$ and center $x\in\mathbb{R}^d$. Given any $r>0$, we have an open cover $\{B(x,r)\}_{x\in\mathcal{M}}$ of $\mathcal{M}$. By the compactness of $\mathcal{M}$, there exists a finite cover $\{B(x_i,r)\}_{i=1,\ldots,C_\mathcal{M}}$ for some finite integer $C_\mathcal{M}$ such that $\mathcal{M}\subset\bigcup_{i}B(x_i,r)$. Let $(1/\tau)$ denote the condition number of $\mathcal{M}$, then we can choose proper radius $r<\tau/2$ such that $U_i=\mathcal{M}\cap B(x_i,r)$ is diffeomorphic to a ball in $\mathbb{R}^{d_{\mathcal M}}$ \citep{niyogi2008finding}. The definition and detailed introduction of condition number (or its inverse called ``reach") can be found in \citet{federer1959curvature} and \citet{aamari2019estimating}. Besides, the number of charts $C_\mathcal{M}$ satisfies
$$C_{\mathcal{M}}\le\lceil S_{(\mathcal{M})}T_{d_{\mathcal M}}/r^{d_{\mathcal M}}\rceil,$$
where $S_{\mathcal{M}}$ is the area of the surface of $\mathcal{M}$ and $T_{d_{\mathcal M}}$ is the thickness of $U_i$'s, which is defined as the average number of $U_i$'s that contain a point on $\mathcal{M}$. By equation (19) in Chapter 2 of \citet{conway2013sphere}, the thickness $T_{d_{\mathcal M}}$ scales approximately linear in $d_{\mathcal M}$ and there exist coverings such that $T_{d_{\mathcal M}}\le d_{\mathcal M}\log(d_{\mathcal M})+d_{\mathcal M}\log\log(d_{\mathcal M})+5d_{\mathcal M}\le7d_{\mathcal M}\log(d_{\mathcal M})$. Let the tangent space of $\mathcal{M}$ at $x_i$ be denoted by $T_{x_i}(\mathcal{M})$ and let $V_i\in\mathbb{R}^{d\times d_{\mathcal M}}$ be the matrix concatenating the orthonormal basis of the tangent space as column vectors. Then for any $x\in U_i$ we can define the projection
$$\phi_i(x)=a_i(V_i^\top(x-x_i)+b_i),$$
where $a_i\in(0,1]$ and $b_i$ are proper scalar and vector such that $\phi_i(x)\in[0,1]^{d_{\mathcal M}}$ for any $x\in U_i$. Note that each projection $\phi_i$ is a linear function, which can be computed by a one-hidden layer ReLU network.

\textbf{Step 2:} Approximate low-dimensional functions.\\
For charts $\{(U_i,\phi_i)\}_{i=1}^{C_\mathcal{M}}$, we can approximate the function on each chart by approximation the projected function in the low-dimensional space. By Theorem 13.7 in \citet{tu2011manifolds}, the target function $f$ can be written as
$$f=\sum_{i=1}^{C_\mathcal{M}} f\rho_i:=\sum_{i=1}^{C_\mathcal{M}} f_i,$$
 where $\rho_i$'s are elements in $C^\infty$ partition of unity on $\mathcal{M}$ being supported in $U_i$'s.  Note that the manifold $\mathcal{M}$ is compact and smooth and $\rho_i$'s are $C^\infty$, then $f_i$'s have the same smoothness as $f$ itself for $i=1,\ldots,C_\mathcal{M}$. Note that the collection of the supports, $\{{\rm supp}(\rho_i)\}_{i\in\mathcal{A}}$ is locally finite, and let $C_\rho$ denote the maximum number of ${\rm supp}(\rho_i)$'s that a point on $\mathcal{M}$ can belong to.  Besides, since each $\phi_i$ is linear projection operator, it is not hard to show that each $f_i\circ\phi_i^{-1}$ is a H\"older smooth function with order $\beta>0$ on $\phi_i(U_i)\subset [0,1]^{d_{\mathcal M}}$, i.e., $f_i\circ\phi_i^{-1}\in\mathcal{H}^\beta(\phi_i(U_i),\sqrt{d/d_{\mathcal M}}B_0)$ for $i=1,\ldots,C_\mathcal{M}$. A detailed proof can be found in Lemma 2 of \citet{chen2019nonparametric}. By the extended version of Whitney' extension theorem in \cite{fefferman2006whitney}, we can approximate the smooth extension of $f_i\circ\phi_i^{-1}$ on $[0,1]^{d_{\mathcal M}}$. By Corollary \ref{thm_apx1}, for any $M,N\in\mathbb{N}^+$, there exists a function $g_i$ implemented by a ReLU network  with width $\mathcal{W}=38(\lfloor\beta\rfloor+1)^23^{d_{\mathcal M}}(d_{\mathcal M})^{\lfloor\beta\rfloor+1}N\lceil\log_2(8N)\rceil$ and depth $\mathcal{D}=21(\lfloor\beta\rfloor+1)^2M\lceil \log_2(8M)\rceil+2d_{\mathcal M}$ such that
$$\vert f_i\circ\phi_i^{-1}(x)-g_i(x) \vert\leq 19\sqrt{d/d_{\mathcal M}} B_0(\lfloor\beta\rfloor+1)^2(d_{\mathcal M})^{\lfloor\beta\rfloor+(\beta\vee1)/2}(NM)^{-2\beta/d_{\mathcal M}},$$
for any $x\in \phi_i(U_i)\subset[0,1]^{d_{\mathcal M}}$.

\textbf{Step 3:} Approximate the target function on the manifold.\\
By construction of subnetworks, the projected target functions $f_i\circ\phi_i^{-1}$ on each region $\phi_i(U_i)$ can be approximated by ReLU networks $g_i$. Note that each projection $\phi_i$ is a linear function can be computed by a one-hidden layer ReLU network. Then we stack two more layer to $g_i$ and get $\tilde{g}_i=g_i\circ\phi_i$ such that for any $x\in U_i$,
$$\vert f_i(x)-\tilde{g}_i(x)\vert=\vert f_i(x)-g_i\circ\phi_i(x) \vert\leq 19B_0(\lfloor\beta\rfloor+1)^2 d^{1/2}(d_{\mathcal M})^{\lfloor\beta\rfloor+\beta/2}(NM)^{-2\beta/d_{\mathcal M}},$$
where $\tilde{g}_i$ is a ReLU activated network with width $\mathcal{W}=38(\lfloor\beta\rfloor+1)^23^{d_{\mathcal M}}(d_{\mathcal M})^{\lfloor\beta\rfloor+1}N\lceil\log_2(8N)\rceil$ and depth $\mathcal{D}=21(\lfloor\beta\rfloor+1)^2M\lceil \log_2(8M)\rceil+2d_{\mathcal M}+2$. Since there are $C_\mathcal{M}$ charts, we parallel these subnetworks $\tilde{g}_i$ to get $\tilde{g}=\sum_{i=1}^{C_\mathcal{M}} \tilde{g}_i$ such that
\begin{align*}
\vert f(x)-\tilde{g}(x)\vert&=\left\vert \sum_{i=1}^{C_\mathcal{M}}f_i(x)-\sum_{i=1}^{C_\mathcal{M}} \tilde{g}_i(x)\right\vert\\
&\le C_\rho \max_{i=1,\ldots,C_\mathcal{M}}\vert f_i(x)- \tilde{g}_i(x)\vert\\
&\le19 C_\rho B_0(\lfloor\beta\rfloor+1)^2d^{1/2}(d_{\mathcal M})^{\lfloor\beta\rfloor+\beta/2}(NM)^{-2\beta/d_{\mathcal M}},
\end{align*}
for any $x\in\mathcal{M}$. Such a neural network $\tilde{g}$ has width $\mathcal{W}=38C_\mathcal{M}(\lfloor\beta\rfloor+1)^23^{d_{\mathcal M}}(d_{\mathcal M})^{\lfloor\beta\rfloor+1}N\lceil\log_2(8N)\rceil$ and depth $\mathcal{D}=21(\lfloor\beta\rfloor+1)^2M\lceil \log_2(8M)\rceil+2d_{\mathcal M}+2$. Recall that $C_{\mathcal{M}}\le\lceil S_{(\mathcal{M})}T_{d_{\mathcal M}}/r^{d_{\mathcal M}}\rceil\le \lceil 7S_{(\mathcal{M})}d_{\mathcal M}\log(d_{\mathcal M})/r^{d_{\mathcal M}}\rceil\le C_1 S_{(\mathcal{M})} (2/\tau)^{d_{\mathcal M}} d_{\mathcal M}\log(d_{\mathcal M})$ for some universal constant $C_1>0$, then width  $\mathcal{W}\le266(\lfloor\beta\rfloor+1)^2\lceil S_\mathcal{M}(6/\tau)^{d_{\mathcal M}}\rceil(d_{\mathcal M})^{\lfloor\beta\rfloor+2}N\lceil\log_2(8N)\rceil$. Then we have
\begin{align*}
	\vert f(x)-\tilde{g}(x)\vert &\le C_2 B_0 (\lfloor\beta\rfloor+1)^2d^{1/2}(d_{\mathcal M})^{3\lfloor\beta\rfloor/2+1/2}(NM)^{-2\beta/d_{\mathcal M}},
\end{align*}
where $C_2>0$ is some constant not depending on $n, d,d_{\mathcal M},N,M,\beta,B_0$ and $\tau$. And combining Lemma 3.2, we have
	$$\mathbb{E} \Vert \hat{f}_n-f_0\Vert^2_{L^2(\nu)}\leq C_1\mathcal{B}^2\frac{\mathcal{S}\mathcal{D}\log(\mathcal{S})
	(\log n)^3}{n}+C_3{B_0^2}(\lfloor\beta\rfloor+1)^4d(d_{\mathcal M})^{3\lfloor\beta\rfloor+1}(NM)^{-4\beta/d_{\mathcal M}},$$
where $C_2>0$ is some constant not depending on $n,d,d_{\mathcal M},\mathcal{B},\mathcal{S},\mathcal{D},N,M,\beta,B_0,\tau$ and $S_\mathcal{M}$.
This completes the proof of Theorem \ref{thm5}.
\end{proof}
}}

\subsection{Proof of Theorem \ref{thm4}}

\begin{proof}
	Let $E\subset\mathbb{R}^d$ be the support of $X$ with Minkowski dimension $d^*\equiv {\rm dim}_M(E)$. Let $T=\{(x_1-x_2)/\Vert x_1-x_2\Vert_2:x_1,x_2\in \bar{E}\}$ be the standardized difference of set $\bar{E}$ where $\bar{E}$ is the closure of $E$. By Lemma \ref{lemmab5}, there exists an absolute constant $\kappa$, a realization of random projection with entries i.i.d from Rademacher random variables $A:\mathbb{R}^d\to\mathbb{R}^{d_0}$ such that for all $\tau,\delta\in(0,1)$ if $d\ge d_0\ge \kappa(\gamma^2(T)+\log(2/\tau))/\delta^2$,
	$$(1-\delta)\Vert x_1-x_2\Vert_2^2\leq\Vert Ax_1-Ax_2\Vert_2^2\leq(1+\delta)\Vert x_1-x_2\Vert_2^2,$$
	for all $x_1,x_2\in \bar{E}$, where $\gamma(T)$ is defined in Lemma \ref{lemmab5}. Note that every covering (by closed balls) of $E$ is also a covering of $\bar{E}$, which implies ${\rm dim}_M(E)={\rm dim}_M(\bar{E})=d^*$. And $\gamma(T)$ is also related to $d^*$ the intrinsic dimension of $E$. More exactly, let $N_0=\mathcal{N}(\varepsilon,\Vert\cdot\Vert_2,\bar{E})$ be the covering number of $\bar{E}$ with radius $\varepsilon$ and $C_E=\{x_i\}_{i=1}^{N_0}\subset \bar{E}$ be the set of anchor points. Then for any $x\in\bar{E}$, there exists a $x_i$ such that $\Vert x-x_i\Vert_2\leq\varepsilon.$ For the difference set $\bar{E}-\bar{E}:=\{x_1-x_2:x_1,x_2\in\bar{E}\}$, we can construct a $2\varepsilon$-covering with $N_0^2$ anchor points $\{x_1-x_2:x_1,x_2\in C_E\}$. For any $y\in \bar{E}-\bar{E}$, there exists $x,x^\prime\in \bar{E}$ such that $y=x-x^\prime$. And there exists $x_1,x_2\in C_E$ such that $\Vert x-x_1\Vert_2\leq\varepsilon$ and $\Vert x^\prime-x_2\Vert_2\leq\varepsilon$. Then let $y^\prime=x_1-x_2$, we have $\Vert y-y^\prime\Vert_2=\Vert (x-x^\prime)-(x_1-x_2)\Vert_2\leq\Vert x-x_1\Vert_2+\Vert x^\prime-x_2\Vert_2\leq2\varepsilon.$ {\color{black} This shows that $\mathcal{N}(2\varepsilon,\Vert\cdot\Vert_2,\bar{E}-\bar{E})\leq\mathcal{N}(\varepsilon,\Vert\cdot\Vert_2,\bar{E})^2=N_0^2$, and ${\rm dim}_{M}(\bar{E}-\bar{E})\le2d^*$.
		Let $\bar{T}$ denote the bounded set $\bar{E}-\bar{E}$. Now we derive the relationship between the covering number of $\bar{T}$ and that of $T$. Firstly, given any real number $\delta>0$, we consider the subset $\bar{T}_\delta:=\{\bar{t}\in\bar{T}:\Vert\bar{t}\Vert_2\ge\delta\}$ and $T_\delta:=\{\bar{t}/\Vert\bar{t}\Vert_2:\bar{t}\in\bar{T}_\delta\}$. We scale up the set $\bar{T}_\delta:=\{\bar{t}\in\bar{T}_\delta:\Vert\bar{t}\Vert_2\ge\delta\}$ by $1/\delta$ times to get $\frac{1}{\delta}\bar{T}_\delta:=\{\bar{t}/\delta:\bar{t}\in\bar{T}_\delta\}$. By the definition of the Minkowski dimension (with respect the covering number) and the property of scaling, it is easy to see that the $\epsilon$-covering number of $\frac{1}{\delta}\bar{T}_\delta$ is no more than $(1/\delta)^{2d^*}$ times larger than that of $\bar{T}_\delta$ since ${\rm dim}_{M}(\bar{T}_\delta)\le{\rm dim}_{M}(\bar{T})\le2d^*$, i.e. for each $\delta>0$ we have,
		\begin{align*}
			\mathcal{N}(\epsilon,\Vert\cdot\Vert_2,\frac{1}{\delta}\bar{T}_\delta)&\le (1/\delta)^{2d^*}\mathcal{N}(\epsilon,\Vert\cdot\Vert_2,\bar{T}_\delta)\\
			&\le(1/\delta)^{2d^*}\mathcal{N}(\epsilon,\Vert\cdot\Vert_2,\bar{T})\\
			&\le c_0(1/\delta)^{2d^*}(1/\epsilon)^{2d^*},
		\end{align*}
		where $c_0>0$ is a constant not depending on $d^*,\epsilon$ and $\delta$. This implies ${\rm dim}_M(\frac{1}{\delta}\bar{T}_\delta)\le2d^*+{\rm dim}_M(\bar{T}_\delta)$ for $\delta>0$.
		
		Now $\delta>0$, we link the Minkowski dimension of $\frac{1}{\delta}\bar{T}_\delta$ to that $T_\delta$. Given any $\epsilon$, suppose $\bar{t}_1,\ldots,\bar{t}_m$ are the anchor points of a minimal $\epsilon$-cover of $\frac{1}{\delta}\bar{T}_\delta$. By the definition of covering, for any $\bar{t}\in\frac{1}{\delta}\bar{T}_\delta$, there exists an anchor point $\bar{t}_i$ for some $i\in\{1,\ldots,m\}$ such that $\Vert\bar{t}-\bar{t}_i\Vert_2\le\epsilon$. Since $\bar{t},\bar{t}_i\in\frac{1}{\delta}\bar{T}_\delta$, we have $\Vert\bar{t}\Vert_2\ge1,\Vert\bar{t}_i\Vert_2\ge1$, and
		\begin{align*}
			\Big\Vert\frac{\bar{t}}{\Vert\bar{t}\Vert_2}-\frac{\bar{t}_i}{\Vert\bar{t}_i\Vert_2}\Big\Vert_2&\leq\Big\Vert\frac{\bar{t}}{\Vert\bar{t}\Vert_2}-\frac{\bar{t}}{\Vert\bar{t}_i\Vert_2}+\frac{\bar{t}}{\Vert\bar{t}_i\Vert_2}-\frac{\bar{t}_i}{\Vert\bar{t}_i\Vert_2}\Big\Vert_2\\
			&\leq\Big\vert\frac{1}{\Vert\bar{t}\Vert_2}-\frac{1}{\Vert\bar{t}_i\Vert_2}\Big\vert\Vert\bar{t}\Vert_2+\frac{\Vert\bar{t}-\bar{t}_i\Vert_2}{\Vert\bar{t}_i\Vert_2}\\
			&\leq\Big\vert\frac{\epsilon}{\Vert\bar{t}\Vert_2(\Vert\bar{t}\Vert_2+\epsilon)}\Big\vert\Vert\bar{t}\Vert_2+\frac{\epsilon}{\Vert\bar{t}_i\Vert_2}\\
			&\leq2\epsilon.
		\end{align*}
		Thus the ball around ${\bar{t}_i}/{\Vert\bar{t}_i\Vert_2}$ with radius $2\epsilon$ covers ${\bar{t}}/{\Vert\bar{t}\Vert_2}$, which implies $\mathcal{N}(2\epsilon,\Vert\cdot\Vert_2,T_\delta)\le\mathcal{N}(\epsilon,\Vert\cdot\Vert_2,\frac{1}{\delta}\bar{T}_\delta)$. Then ${\rm dim}_M(T_\delta)\le{\rm dim}_M(\frac{1}{\delta}\bar{T}_\delta)\le 2d^*+{\rm dim}_M(\bar{T}_\delta)\le 2d^*+{\rm dim}_M(\bar{T})\le4d^*$. Since $\lim_{\delta\downarrow0}\bar{T}_\delta=\bar{T}$ and $\lim_{\delta\downarrow0}{T}_\delta={T}$ are both bounded, then
		\begin{align*}
			\sqrt{H(\varepsilon,\Vert\cdot\Vert_2,T)}&=\sqrt{\log(\mathcal{N}(\varepsilon,\Vert\cdot\Vert_2,T))}\leq c_1\sqrt{d^*\log(1/\varepsilon)},
		\end{align*}
		for some constant $c_1>0$.}
	Then by the definition of $\gamma(T)=\int_0^1\sqrt{H(\varepsilon,\Vert\cdot\Vert_2,T)}d\varepsilon$, we know $\gamma^2(T)=cd^*$ for some constant $c>0$. And $d_0\ge \kappa(\gamma^2(T)+\log(2/\tau))/\delta^2=\kappa(cd^*+\log(2/\tau))/\delta^2$.

	Since each entry of $A$ is either $1$ or $-1$, then $A(E)\subseteq A([0,1]^d)\subseteq H:=[-\sqrt{dd_0},\sqrt{dd_0}]^{d_0}$.
	Note that for any $z\in A(\bar{E})$, there exists a unique $x\in\bar{E}$ such that $Ax=z$. To prove this, let $x^\prime\in\bar{E}$ be another point in $\bar{E}$ satisfying $Ax^\prime=z$, then $(1-\delta)\Vert x-x^\prime\Vert^2_2\leq\Vert Ax-Ax^\prime\Vert_2^2\leq(1+\delta)\Vert x-x^\prime\Vert^2_2$ implies that $\Vert x-x^\prime\Vert_2=0$.
	Then we can define a one-one map $\mathcal{SL}$ from $A(\bar{E})$ to $\bar{E}$, i.e. $x_z=\mathcal{SL}(\{x\in\bar{E}: Ax=z\})$. And we can see that $\mathcal{SL}:A(\bar{E})\to\bar{E}$ is a differentiable function with the norm of its derivative locates in $[\sqrt{1/(1+\delta)},\sqrt{1/(1-\delta)}]$, since
	$$\frac{1}{1+\delta}\Vert z_1-z_2\Vert_2^2\leq\Vert x_{z_1}-x_{z_2}\Vert_2^2\leq\frac{1}{1-\delta}\Vert z_1-z_2\Vert_2^2,$$
	for any $z_1,z_2\in A(\bar{E})$.
	For the high-dimensional function $f_0: [0,1]^{d}\to\mathbb{R}^1$, we define its low-dimensional representation $\tilde{f}_0:\mathbb{R}^{d_0}\to\mathbb{R}^1$ by
	$$\tilde{f}_0(z)=f_0(x_z), \quad {\rm for\ any} \ z\in A(\bar{E})\subseteq\mathbb{R}^{d_0},$$
	with $\tilde{f}_0\in\mathcal{H}^\beta(A(\bar{E}),B_0/(1-\delta)^{\beta/2})$ recalling that $f_0\in\mathcal{H}^\beta([0,1]^d,B_0)$.  By the extended version of Whitney' extension theorem in \cite{fefferman2006whitney}, there exists a function $\tilde{F}_0\in\mathcal{H}^\beta(H,B_0/(1-\delta)^{\beta/2})$ such that $\tilde{F}_0(z)=\tilde{f}_0(z)$ for any $z\in A(\bar{E})$. With $H=[-\sqrt{dd_0},\sqrt{dd_0}]^{d_0}$, by Corollary \ref{thm_apx1}, for any $N,M\in\mathbb{N}^+$, there exists a function $\tilde{f_n}: \mathbb{R}^{d_0}\to\mathbb{R}^1$ implemented by a ReLU FNN with width $\mathcal{W}=38(\lfloor\beta\rfloor+1)^23^{d_0}d_0^{\lfloor\beta\rfloor+1}N\lceil\log_2(8N)\rceil$ and depth $\mathcal{D}=21(\lfloor\beta\rfloor+1)^2M\lceil \log_2(8M)\rceil+2d_0$ such that
	$$\vert \tilde{f}_n(z)-\tilde{F}_0(z)\vert\leq c_2\frac{B_0}{(1-\delta)^{\beta/2}}(\lfloor\beta\rfloor+1)^2d^{1/2}d_0^{\lfloor\beta\rfloor+(\beta\vee1+1)/2}(NM)^{-2\beta/d_0},$$
	for any $z\in H$ where $c_2$ is a constant not depending on $d,d_0,\beta,N$ or $M$.
	If we define $f^*_n=\tilde{f}_n\circ A$ which is $f^*_n(x)=\tilde{f}_n(Ax)$ for any $x\in[0,1]^d$, then $f^*_n\in\mathcal{F}_{\mathcal{D},\mathcal{W},\mathcal{S},\mathcal{B}}$ is also a ReLU FNN with the same parameter as $\tilde{f}_n$. For any $x\in{E}$,
	\begin{align*}
		\vert f^*_n(x)-f_0(x)\vert&=\vert \tilde{f}_n(Ax)-\tilde{F}_0(Ax)\vert \\
		&\leq c_2\frac{B_0}{(1-\delta)^{\beta/2}}(\lfloor\beta\rfloor+1)^2d^{1/2}
		d_0^{\lfloor\beta\rfloor+(\beta\vee1+1)/2}(NM)^{-2\beta/d_0}.
	\end{align*}
	Combining with Lemma \ref{lemma1}, since $X$ is supported on $E$, we have
	$$\mathbb{E} \Vert \hat{f}_n-f_0\Vert^2_{L^2(\nu)}\leq C_1\mathcal{B}^2\frac{\mathcal{S}\mathcal{D}\log(\mathcal{S})
		(\log n)^3}{n}+C_2\frac{B_0^2}{(1-\delta)^{\beta}}(\lfloor\beta\rfloor+1)^4dd_0^{2\lfloor\beta\rfloor+\beta\vee1+1}(NM)^{-4\beta/d_0},$$
	where $d_0\ge \kappa d^*/\delta^2=O(d^*/\delta^2)$ for some constants $\kappa>0$ and $C_1,C_2>0$ are constants that do not depend on $n,d,d_0\mathcal{B},\mathcal{S},\mathcal{D},B_0,\beta,\kappa,\delta,N$ or $M$, $\lfloor\beta\rfloor=s$ is the biggest integer strictly smaller than $\beta$.
	This completes the proof of Theorem \ref{thm4}.
\end{proof}
}

 {
 \section{Supporting Lemmas}
For ease of reference, we collect several existing results
 that we used in our proofs.

\begin{lemma}[Proposition 4.3. in \cite{lu2020deep}]\label{lemmab1}
	For any $N,M,d\in\mathbb{N}^+$ and $\delta\in(0,3K]$ with $K=\lfloor N^{1/d}\rfloor^2\lfloor M^{2/d}\rfloor$, there exists a one-dimensional function $\phi$ implemented by a ReLU FNN with width $4\lfloor N^{1/d}\rfloor+3$ and depth $4M+5$ such that
	$$\phi(x)=k,\quad {\rm if\ }x\in[\frac{k}{K},\frac{k+1}{K}-\delta\cdot1_{k<K-1}],\ {\rm for\ } k=0,1,\ldots,K-1.$$
\end{lemma}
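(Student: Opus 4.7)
This is a restatement of Proposition 4.3 in \cite{lu2020deep}, so the plan is to reproduce (a slight variant of) their construction. Writing $N_0 := \lfloor N^{1/d}\rfloor$ and $M_0 := \lfloor M^{2/d}\rfloor$, so that $K = N_0^2 M_0$, I would express each index $k \in \{0, 1, \ldots, K-1\}$ in a mixed base as $k = i \cdot N_0^2 + j \cdot N_0 + \ell$ with $i \in \{0, \ldots, M_0-1\}$ and $j, \ell \in \{0, \ldots, N_0-1\}$, and build three ``digit-extraction'' sub-networks whose outputs are added linearly at the final layer to recover $\phi(x) = k$.

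The basic building block is a one-hidden-layer ReLU sub-network $T$ of width $O(N_0)$ that implements the step function $T(x) = m$ on each interval $[m/N_0, (m+1)/N_0 - \delta']$ for $m = 0, \ldots, N_0 - 1$; this is built as a sum of $N_0$ shifted and clipped ReLU units (each a difference of two ReLUs), with each unit made steep enough to rise by exactly $1$ across a forbidden gap of width $\delta'$. Extracting the digits $j$ and $\ell$ then amounts to applying $T$, subtracting and rescaling (an affine operation), and applying $T$ again, giving a $N_0^2$-piece step function at width $O(N_0)$ and constant depth. To incorporate the extra factor $M_0$, I would compose an $O(M)$-depth ``refinement'' module of constant width that iteratively subdivides each of the $N_0^2$ plateaus into $M_0$ sub-plateaus; since a depth-$M$ composition of a fixed constant-width, constant-depth block can produce a number of pieces polynomial (in fact exponential) in $M$, and $M_0 = \lfloor M^{2/d}\rfloor$ is only polynomial in $M$, this comfortably fits within depth $O(M)$ and width $O(N_0)$.

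The bookkeeping — verifying that the constants in the width and depth bounds work out to exactly $4\lfloor N^{1/d}\rfloor + 3$ and $4M+5$ — is tedious but routine once the modular construction is set up. The main technical concern is the safety-gap argument: at each stage of composition, a small interval around each jump discontinuity must be excluded to guarantee that the steep ReLU approximation of the indicator does not spill across level boundaries. Tracking these losses across the three digit-extraction stages and showing that the total forbidden interval at each jump has width at most the prescribed $\delta$ is where the hypothesis $\delta \leq 1/(3K)$ enters (the factor $3$ accounting for one safety margin per stage). Since all of these estimates are already carried out explicitly in \cite{lu2020deep}, I would simply invoke their Proposition 4.3 to conclude.
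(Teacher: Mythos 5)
Your proposal matches the paper's approach exactly: Lemma~B.1 appears in the paper's ``Supporting Lemmas'' appendix as a result quoted directly from Proposition~4.3 of \cite{lu2020deep} without re-derivation, and you likewise defer to that reference (your construction sketch is a plausible paraphrase but is not verified independently). You also correctly use the hypothesis $\delta\le 1/(3K)$, which reveals a typo in the paper's restatement of the lemma — it reads $\delta\in(0,3K]$, whereas the correct condition $\delta\in(0,1/(3K)]$ is what is actually used (and is stated correctly in the paper's Theorem~\ref{thm_apx}).
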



\begin{lemma}[Proposition 4.4. in \cite{lu2020deep}]\label{lemmab2}
	Given any $N,M,s\in\mathbb{N}^+$ and $\xi_i\in[0,1]$ for $i=0,1,\ldots,N^2L^2-1$, there exists a function $\phi$ implemented by a ReLU FNN with width $16s(N+1)\lceil\log_2(8N)\rceil$ and depth $5(M+2)\lceil\log_2(4M)\rceil$ such that
	$$\vert\phi(i)-\xi_i\vert\leq N^{-2s}M^{-2s},\ {\rm for\ }i=0,1,\ldots,N^2M^2-1,$$
	and $0\le\phi(x)\le1$ for any $x\in\mathbb{R}$.
\end{lemma}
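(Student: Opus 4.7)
The lemma to prove is a classical point-fitting result: at $N^2M^2$ integer points we must fit prescribed values in $[0,1]$ to precision $(NM)^{-2s}$ using a ReLU network of width $O(sN\log N)$ and depth $O(M\log M)$. An information count shows that the total number of bits to be stored is of order $sN^2M^2\log(NM)$, while the network has only $O(sNM\log N\log M)$ weights; hence no bit-by-bit storage is possible, and the proof must rely on a bit-extraction/compression scheme that encodes many bits into a single real number and then decodes them with a ReLU sub-network.

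The plan is as follows. First, quantize: set $K=\lceil 2s\log_2(NM)\rceil$ and replace each $\xi_i$ by its $K$-bit truncation $\tilde\xi_i$, so that $|\xi_i-\tilde\xi_i|\le 2^{-K}\le (NM)^{-2s}$, and it suffices to match $\tilde\xi_i$ exactly. Next, split the index as $i=jM^2+k$ with $j\in\{0,\dots,N^2-1\}$ and $k\in\{0,\dots,M^2-1\}$, and pack the truncated values for each $j$ into a single real number
\[
a_j=\sum_{k=0}^{M^2-1}\tilde\xi_{jM^2+k}\,2^{-(k+1)K}\in[0,1].
\]
Thus $a_j$ carries all of the information for the $j$-th block in its binary expansion.

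The proof then constructs three ReLU sub-networks and composes them. (a) A \emph{point-fitting} block $\psi_1$ that maps $j\mapsto a_j$ on the $N^2$ integers $j=0,\dots,N^2-1$; this is the $s=1$ specialization of the lemma, and a direct construction based on stacking sawtooth functions achieves width $O(N)$ and depth $O(\log N)$ with exact interpolation. (b) A constant-size index-arithmetic block that extracts $j=\lfloor i/M^2\rfloor$ and $k=i-jM^2$ using only a handful of ReLU units (floor can be realized on integer inputs via linear combinations of the sawtooth used in Lemma~\ref{lemmab1}). (c) The core \emph{bit-extraction} block that, given $a_j$ and the block index $k$, isolates the bits in positions $kK+1,\dots,(k+1)K$ and reconstructs $\tilde\xi_{jM^2+k}$. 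Bit extraction is done via the standard identity $b_1(x)=\sigma(2x-1)+x-\sigma(2x)$ that peels one leading bit off $x\in[0,1]$; iterating this map $O(M^2K)$ times naively would be too expensive, so one parallelizes across the $K$ bits of one value (contributing a factor $sK\asymp s\log(NM)$ to the width) and telescopes the $M^2$ possible shifts using a binary selector of depth $O(\log M)$ multiplied by a positioning factor of depth $O(M)$, giving total depth $O(M\log M)$.

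Chaining these three blocks through a linear selector that zeroes out all but the desired bits produces the required $\phi$; clipping by $\phi\mapsto\sigma(\phi)-\sigma(\phi-1)$ (two extra ReLU units) enforces $0\le\phi\le 1$ everywhere without affecting its values on the integer grid. A bookkeeping check of the constants in each block, combined with the depth bound on the iterated bit-extraction and the explicit $16s(N+1)\lceil\log_2(8N)\rceil$ parallel channels needed to carry the $K$ bits together with the point-fitting output, yields exactly the stated width and depth. The principal technical hurdle is the joint bookkeeping in step (c): one must realize the bit-extraction iteration so that its depth depends only on $M$ (not on $s$ or $N$) and its width absorbs both the $s\log(NM)$ bits and the selector for the $M^2$ blocks; this is precisely where the $\lceil\log_2(8N)\rceil$ and $\lceil\log_2(4M)\rceil$ logarithmic factors in the statement originate.
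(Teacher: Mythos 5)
The paper does not actually prove this lemma: it is collected in Appendix~B under the heading ``Supporting Lemmas'' explicitly as an existing result, namely Proposition~4.4 of \citet{lu2020deep}, and is used as a black box. So there is no in-paper proof for your sketch to be matched against; what can be assessed is whether your sketch would actually deliver the stated bounds.

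Your high-level strategy (quantize to $K=\lceil 2s\log_2(NM)\rceil$ bits, pack blocks into reals, exactly point-fit the packed reals, then bit-extract) is indeed the standard route and is in the same spirit as the construction in \citet{lu2020deep}. However, two of the three blocks are asserted rather than established, and in both cases the assertion does not obviously hold. For block~(a), exact interpolation of the $N^2$ packed reals $a_j$ at integer inputs with width $O(N)$ and depth $O(\log N)$ is not ``a direct construction based on stacking sawtooth functions'' --- sawtooth stacking is the tool for squaring and multiplication, not for point fitting; nor can it be ``the $s=1$ specialization of the lemma,'' since that specialization only guarantees accuracy $(NM)^{-2}$, which is catastrophically insufficient when $a_j$ must be exact to $M^2K$ bits for the downstream bit extraction to succeed, and invoking the lemma to prove itself would be circular. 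For block~(c), your packing $i=jM^2+k$ stores $M^2K$ bits in $a_j$, so addressing bit position $kK$ for variable $k\in\{0,\dots,M^2-1\}$ requires a shift by up to $M^2K$ binary places. A shift by a fixed amount $L$ costs roughly depth $L$ (peeling one bit per layer), so the na\"{i}ve iteration costs depth $O(M^2K)$; the ``binary selector of depth $O(\log M)$ times a positioning factor of depth $O(M)$'' would need each conditional sub-shift of $2^iK$ bits to cost $O(1)$ depth, which is not the case for a ReLU realization of the ``multiply by $2^{2^iK}$ and take fractional part'' operation. So the claimed depth $O(M\log M)$ for this block is not justified and I do not see how it can be reached with this particular packing. (A smaller cosmetic point: the displayed identity $b_1(x)=\sigma(2x-1)+x-\sigma(2x)$ does not compute the leading bit, nor the remainder after removing it; on $[0,1]$ it equals $\sigma(2x-1)-x$.) The gaps are exactly where the constants $16s(N+1)\lceil\log_2(8N)\rceil$ and $5(M+2)\lceil\log_2(4M)\rceil$ would have to be tracked, which is why a correct proof needs a different balance of the addressing work between width and depth than your $i=jM^2+k$ split provides.
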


The next lemma demonstrate that the production function and polynomials can be approximated by ReLU neural networks. The basic idea is firstly to approximate the square function using ``sawtooth" functions then the production function, which is firstly raised in \cite{yarotsky2017error}. A general polynomial can be further approximated combining the approximated square function and production function. The following two lemmas are more general results than those in \cite{yarotsky2017error}.
\begin{lemma}[Lemma 4.2. in \cite{lu2020deep}]\label{lemmab3}
	For any $N,M\in\mathbb{N}^+$, and $a,b\in\mathbb{R}$ with $a<b$, there exists a function $\phi$ implemented by a ReLU FNN with width $9N+1$ and depth $M$ such that
	$$\vert\phi(x,y)-xy\vert\le 6(b-a)^2N^{-M}$$
	for any $x,y\in[a,b]$.
\end{lemma}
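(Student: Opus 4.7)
The plan is to reduce the bilinear approximation on $[a,b]^2$ to the univariate approximation of $u\mapsto u^2$ on $[0,1]$, and then build the latter by iterating an $N$-ary sawtooth inside a ReLU network of the prescribed width and depth.

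\textbf{Reduction to the square on $[0,1]$.} First I would write
\[
xy \;=\; (b-a)^2\,\tilde x\tilde y \;+\; (b-a)a\,(\tilde x+\tilde y) \;+\; a^2,\qquad \tilde x=\tfrac{x-a}{b-a},\ \tilde y=\tfrac{y-a}{b-a}\in[0,1],
\]
so that any uniform-error $\varepsilon$ approximation of $\tilde x\tilde y$ on $[0,1]^2$ yields a uniform-error $(b-a)^2\varepsilon$ approximation of $xy$ on $[a,b]^2$; the affine rescalings are absorbed exactly into the input and output linear layers. Next, the polarization identity
\[
\tilde x\tilde y \;=\; \tfrac12\bigl[(\tilde x+\tilde y)^2 - \tilde x^2 - \tilde y^2\bigr]
\]
reduces the problem to approximating $u\mapsto u^2$ on $[0,1]$ (with a trivial rescaling of $\tilde x+\tilde y$ from $[0,2]$ to $[0,1]$).

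\textbf{Approximation of $u^2$ by an $N$-ary sawtooth.} The classical Yarotsky construction uses the dyadic tent $T(u)=2\min(u,1-u)$ and the identity $u^2=u-\sum_{k\ge1} T^{(k)}(u)/4^{k}$, whose $M$-term truncation has uniform error $O(2^{-2M})$ but requires depth $M$ with width $O(1)$. To sharpen the base of the exponent to $N$ while spending only depth $M$, I would replace $T$ by an $N$-ary tent $T_N:[0,1]\to[0,1]$ that is piecewise linear with slopes $\pm N$ and $N$ teeth, exactly representable by a single ReLU layer of width $O(N)$. Iterating $M$ times yields $T_N^{(M)}$ with roughly $N^M$ teeth, and the analogous truncated identity for $u^2$ gives a uniform error $O(N^{-M})$ using depth $M$ and width $O(N)$, provided one carries the partial sum $\sum_{k\le j}T_N^{(k)}(u)/N^{k}$ along as an auxiliary scalar channel from layer $j$ to layer $j+1$.

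\textbf{Assembly and the main obstacle.} To finish, I would run three parallel copies of the $u^2$ subnetwork on the inputs $\tilde x+\tilde y$, $\tilde x$, $\tilde y$ inside a single depth-$M$ block, and collapse by the polarization weights together with the $(b-a)^2$ rescaling in the output layer. The main technical difficulty is hitting the sharp width budget $9N+1$: the block must simultaneously host (i) three parallel $T_N$-iterations each needing width $\approx 3N$, (ii) one accumulator channel per branch for the partial sums, and (iii) a pass-through channel for the input/bias, all packed into $9N+1$ neurons per layer. This forces each $T_N$ to be built from a shared pool of $\approx 3N$ ReLU units per branch (rather than stacked serially), and the polarization combination has to be folded into the final affine layer so the total depth remains exactly $M$. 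The constant $6$ in $6(b-a)^2N^{-M}$ then arises from combining the polarization factor $3/2$, the explicit truncation-error constant from the $T_N$-identity (a small numerical factor bounded by $4$), and the $(b-a)^2$ rescaling; verifying this constant precisely is the most tedious but routine bookkeeping step of the argument.
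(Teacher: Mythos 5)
This lemma is quoted in Appendix B directly from Lu et al.\ (2020) as a supporting result, and the paper supplies no proof of its own, so there is no internal argument to compare against. Your reconstruction follows essentially the strategy of the cited source: an affine change of variables from $[a,b]^2$ to $[0,1]^2$, the polarization identity to reduce the bilinear problem to the scalar square on $[0,1]$, and an $N$-ary Yarotsky-type sawtooth iteration carried alongside an accumulator channel, packed as three parallel $\approx 3N$-wide branches plus a single carry to meet the width budget $9N+1$. One concrete slip is the coefficient in your telescoping sum, which you write as $N^{-k}$: it should be $N^{-2k}$, the $N$-ary analogue of Yarotsky's $4^{-k}=2^{-2k}$, because composing the sawtooth $T_N$ multiplies the number of piecewise-linear segments by a factor $N$ while the interpolation residual of a quadratic shrinks as the square of the mesh width. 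With the corrected identity the truncation and interpolation errors both come out at $O(N^{-2M})$, which in particular dominates the stated $O(N^{-M})$ bound, so the claimed inequality still follows; the rest is, as you acknowledge, constant bookkeeping.
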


\begin{lemma}[Theorem 4.1 in \cite{lu2020deep}]\label{lemmab4}
	Assume $P(x)=x^\alpha=x_1^{\alpha_1}x_2^{\alpha_2}\cdots x_d^{\alpha_d}$ for $\alpha\in\mathbb{N}^d$ with $\Vert\alpha\Vert_1\leq k\in\mathbb{N}^+$. For any $N,M\in\mathbb{N}^+$, there exists a function $\phi$ implemented by a ReLU FNN with width $9(N+1)+k-1$ and depth $7k^2M$ such that
	$$\vert\phi(x)-P(x)\vert\leq9k(N+1)^{-7kM},\quad{\rm for\ any\ }x\in[0,1]^d.$$
\end{lemma}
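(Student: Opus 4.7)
The plan is to build the network for $P(x) = x_1^{\alpha_1} \cdots x_d^{\alpha_d}$ by iterating the scalar multiplier from Lemma \ref{lemmab3}. Since $\Vert\alpha\Vert_1 \le k$, I can rewrite $P$ as a product $y_1 y_2 \cdots y_m$ of at most $m \le k$ factors, where each $y_j$ is a copy of some coordinate $x_i \in [0,1]$. Define partial products $Q_1 = y_1$ and $Q_j = Q_{j-1} y_j$; the target is $P(x) = Q_m$.

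First I would apply Lemma \ref{lemmab3} on $[0,1]^2$ with width parameter $N+1$ and depth parameter $7kM$, producing a ReLU subnetwork $\phi_\times:\mathbb{R}^2\to\mathbb{R}$ of width $9(N+1)+1$ and depth $7kM$ satisfying $|\phi_\times(u,v) - uv| \le 6(N+1)^{-7kM}$ for $u,v \in [0,1]$. I would then compose $\phi_\times$ sequentially: set $\tilde Q_1 = y_1$ and $\tilde Q_j = \phi_\times(\tilde Q_{j-1}, y_j)$ for $j=2,\ldots,m$. During the depth-$7kM$ block implementing each multiplier, the yet-to-be-consumed factors $y_{j+1},\ldots,y_m$ must be transported through the hidden layers as parallel identity channels; since each $y_j \ge 0$, the identity can be realized by $y \mapsto \sigma(y)$ using one neuron per layer, which adds at most $k-1$ to the width. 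A tiny clipping gadget $t \mapsto \sigma(t)-\sigma(t-1)$ after each multiplier keeps $\tilde Q_j \in [0,1]$ so that the next invocation of Lemma \ref{lemmab3} is legal; this gadget has constant depth and width $2$ and can be absorbed into the $7kM$ budget.

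Putting the pieces together, the total width of the constructed network is $9(N+1) + (k-1)$, matching the statement, and the depth is at most $(m-1)\cdot 7kM \le 7k^2 M$. For the error analysis, I would use a telescoping bound: for each $j\ge 2$,
\[
|\tilde Q_j - Q_j| \le |\phi_\times(\tilde Q_{j-1}, y_j) - \tilde Q_{j-1} y_j| + |\tilde Q_{j-1} - Q_{j-1}|\cdot|y_j| \le 6(N+1)^{-7kM} + |\tilde Q_{j-1} - Q_{j-1}|,
\]
using $|y_j|\le 1$. Induction on $j$ gives $|\tilde Q_m - P(x)| \le 6(m-1)(N+1)^{-7kM} \le 9k(N+1)^{-7kM}$, as required.

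The main obstacle is the combinatorial bookkeeping rather than any conceptual subtlety: one has to track (i) how the $k-1$ parallel identity channels taper off after each consumption of a factor, so that the width never exceeds $9(N+1)+k-1$; (ii) that intermediate $\tilde Q_j$ are clipped back into $[0,1]$ to preserve the hypothesis of Lemma \ref{lemmab3}; and (iii) that absorbing the clipping layers and the permutation wiring into the $7kM$ per-step depth budget still yields the overall depth $7k^2 M$. Once these are handled, the error telescopes cleanly because the Lipschitz constant of each factor $y_j$ on $[0,1]$ is bounded by $1$, so errors propagate additively rather than multiplicatively.
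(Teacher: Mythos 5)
The paper does not actually prove Lemma~\ref{lemmab4}; Appendix~B merely quotes it verbatim as Theorem~4.1 of \cite{lu2020deep}, so there is no proof in this paper to compare against. Your blind reconstruction is nonetheless correct and follows the same route as the proof in that reference: factor the monomial into $m\le k$ unit-interval coordinates, chain the two-input product network of Lemma~\ref{lemmab3} (invoked with width parameter $N+1$ and depth parameter $7kM$) with the yet-unused factors carried forward through single-neuron $\sigma$ identity channels, clip each partial product back into $[0,1]$ so the next invocation of Lemma~\ref{lemmab3} is valid, and telescope the error using that each factor is $1$-Lipschitz and bounded by $1$. Your width count $9(N+1)+1+(m-2)\le 9(N+1)+k-1$, depth count $(m-1)(7kM+O(1))\le 7k^2M$, and the final bound $6(m-1)(N+1)^{-7kM}\le 9k(N+1)^{-7kM}$ all check out.
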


The next lemma is a generalization of Johnson-Lindenstrauss theorem for embedding a set with infinitely many elements, which is firstly proved in \cite{klartag2005empirical}.
\begin{lemma}[Theorem 13.15 in \cite{boucheron2013concentration}] \label{lemmab5}
	Let $A\subset \mathbb{R}^d$ and consider the random projection  $W:\mathbb{R}^d\to\mathbb{R}^{d_0}$ with its entries are independent either standard Gaussian or Rademacher random variables. Let $T=\{(a_1-a_2)/\Vert a_1-a_2\Vert_2:a_1,a_2\in A\}$ and define
	$$\gamma(T)=\int_0^1\sqrt{H(\varepsilon,\Vert\cdot\Vert_2,T)}d\varepsilon,$$
	where $H(\varepsilon,\Vert\cdot\Vert_2,T)$ is the $\varepsilon$-entropy of $T$ with respect to the norm $\Vert\cdot\Vert_2$. There exists an absolute constant $\kappa^\prime$, such that for all $\tau,\delta\in(0,1)$ if $d_0\geq\kappa^\prime(\gamma^2(T)+\log(2/\tau))/\delta^2$, then with probability at least $1-\tau$,
	$$(1-\delta)\Vert a_1-a_2\Vert_2^2\leq\Vert Wa_1-Wa_2\Vert_2^2\leq(1+\delta)\Vert a_1-a_2\Vert_2^2,$$
	for all $a_1,a_2\in A$.
\end{lemma}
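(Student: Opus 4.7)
The plan is to prove this generalized Johnson–Lindenstrauss lemma by combining a sub-exponential concentration inequality for quadratic forms of the random projection with a generic chaining argument over the normalized difference set $T$. Let $W \in \mathbb{R}^{d_0 \times d}$ have i.i.d.\ standard Gaussian or Rademacher entries, normalized so that $\mathbb{E}\|Wt\|_2^2 = d_0\|t\|_2^2$ for fixed $t$. Since $T$ consists of unit vectors, the desired two-sided estimate is equivalent to the uniform bound $\sup_{t \in T}\bigl|\|Wt\|_2^2/d_0 - 1\bigr| \le \delta$. The strategy is therefore to prove a uniform concentration statement for the empirical process $Z_t := \|Wt\|_2^2/d_0 - 1$ indexed by $t \in T$.

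First I would establish the pointwise estimate. For each unit vector $t$, write $\|Wt\|_2^2 = \sum_{i=1}^{d_0} \langle W_i, t\rangle^2$ where the $W_i$ are the i.i.d.\ rows of $W$. Each $\langle W_i, t\rangle$ is sub-Gaussian with variance $1$ (for Gaussian entries this is exact; for Rademacher entries it follows from Khintchine's inequality), so each $\langle W_i, t\rangle^2 - 1$ is sub-exponential. A standard Bernstein-type bound then yields, for any $u \in (0,1)$,
\begin{equation*}
\Pr\bigl(|Z_t| > u\bigr) \le 2\exp(-c\, d_0\, u^2)
\end{equation*}
for an absolute constant $c>0$. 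By polarization, for any two vectors $t,s$, the increment $Z_t - Z_s$ is a centered quadratic form in $W$ applied to $t-s$ and $t+s$; a Hanson–Wright-type inequality gives
\begin{equation*}
\Pr\bigl(|Z_t - Z_s| > u\bigr) \le 2\exp\bigl(-c\, d_0\, \min\{u^2/\|t-s\|_2^2,\, u/\|t-s\|_2\}\bigr),
\end{equation*}
so the process $(Z_t)_{t\in T}$ has mixed sub-Gaussian/sub-exponential increments with respect to $\|\cdot\|_2$.

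Next I would upgrade the pointwise bound to a uniform bound via generic chaining. Fix a nested sequence of $\varepsilon_k$-nets $T_k \subset T$ with $|T_k| \le \mathcal{N}(\varepsilon_k,\|\cdot\|_2,T)$ and $\varepsilon_k = 2^{-k}$, and for each $t \in T$ let $\pi_k(t) \in T_k$ be a nearest point. Write the telescoping identity $Z_t - Z_{\pi_0(t)} = \sum_{k\ge 1}(Z_{\pi_k(t)} - Z_{\pi_{k-1}(t)})$. For each $k$ apply the Hanson–Wright increment bound with deviation level $u_k = \delta\, \varepsilon_{k-1}\cdot 2^{-k/2}$ (say), take a union bound over the at most $|T_k|\cdot|T_{k-1}|$ pairs, and sum the failure probabilities. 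The union-bound condition at scale $k$ requires $d_0 \gtrsim H(\varepsilon_k,\|\cdot\|_2,T)/\delta^2$; summing the chain gives the Dudley-type integral condition $d_0 \gtrsim \gamma^2(T)/\delta^2$, and the initial $|T_0|=1$ pointwise term, together with the desired overall failure probability $\tau$, contributes the additive $\log(2/\tau)/\delta^2$ requirement. Combining with the pointwise bound at the coarsest scale yields the stated condition $d_0 \ge \kappa'(\gamma^2(T) + \log(2/\tau))/\delta^2$.

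The main obstacle, as usual in chaining arguments, is the bookkeeping: balancing the deviation levels $u_k$ across scales so that the sub-Gaussian regime dominates the sub-exponential regime in the Hanson–Wright bound, while ensuring that $\sum_k u_k \le \delta$ and the total failure probability $\sum_k 2|T_k|^2 \exp(-c d_0 \cdot (\text{level bound}))$ is at most $\tau$. A clean way to package this is Talagrand's generic chaining theorem for mixed tail processes (rather than hand-rolling Dudley's bound), which directly produces the majorizing measure / $\gamma_2,\gamma_1$ functionals; here both functionals are controlled by $\gamma(T)$ because $T$ lies in the unit sphere so the diameter in $\|\cdot\|_2$ is bounded. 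A secondary subtlety is the Rademacher case, where the sub-Gaussian norm of $\langle W_i,t\rangle$ is controlled via Khintchine's inequality uniformly in $t$, ensuring the constants absorb into the universal $\kappa'$.
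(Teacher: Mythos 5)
For this lemma the paper supplies no proof at all: it is quoted verbatim, with attribution, as Theorem 13.15 of \citet{boucheron2013concentration} (a form of the Klartag--Mendelson infinite-set Johnson--Lindenstrauss theorem) and placed in the ``Supporting Lemmas'' appendix, whose preamble explicitly says these are existing results collected for ease of reference. So there is no in-paper argument to compare yours against; you are proposing a proof from scratch of a result the paper imports as a black box.

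Your outline is in the right general spirit, but there is a genuine gap at the step where the mixed sub-Gaussian/sub-exponential (Hanson--Wright) increment bound is fed into generic chaining. Chaining a mixed-tail process produces not only a $\gamma_2(T)/\sqrt{d_0}$ term but also a $\gamma_1(T)/d_0$ term from the sub-exponential part, which would force the additional requirement $d_0 \gtrsim \gamma_1(T)/\delta$. Your justification for discarding this --- ``both functionals are controlled by $\gamma(T)$ because $T$ lies in the unit sphere so the diameter in $\|\cdot\|_2$ is bounded'' --- is not correct: bounded diameter on the sphere does not give $\gamma_1 \lesssim \gamma^2$. For instance, a set $T \subset S^{d-1}$ consisting of a few far-apart points plus an $\rho$-separated cluster of $\sim e^{cd}$ points of diameter $\rho = 1/\sqrt{d}$ has $\gamma(T)=O(1)$ but $\gamma_1(T) \gtrsim \sqrt{d}$, so the $\gamma_1/d_0$ term is not absorbed by the stated $d_0 \gtrsim \gamma^2(T)/\delta^2$. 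The actual proof eliminates $\gamma_1$ by exploiting the product structure you already noticed --- $Z_t - Z_s = d_0^{-1}\langle W(t-s), W(t+s)\rangle$ with $\|t+s\|_2 \le 2$ on the sphere --- together with a decoupling step (de la Pe\~{n}a--Gin\'{e} type) that replaces one of the two occurrences of $W$ by an independent copy $W'$. Conditioning on $W'$, the increments become genuinely sub-Gaussian with scale proportional to $\|t-s\|_2$, and the size of the conditional scale is controlled by a second, plain sub-Gaussian chaining; this yields $\mathbb{E}\sup_{t}|Z_t| \lesssim \gamma_2(T)/\sqrt{d_0} + \gamma_2(T)^2/d_0$ with no $\gamma_1$ dependence, after which the stated requirement $d_0 \gtrsim (\gamma^2(T)+\log(2/\tau))/\delta^2$ follows. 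Your sketch skips this decoupling step, and the hand-off ``both functionals controlled by $\gamma$'' papers over exactly the place where it is needed.
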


}

\end{appendix}

\bibliographystyle{imsart-nameyear} 
\bibliography{dnr_bib}    

\end{document}